\newtheorem{lemma}{Lemma}
\newtheorem{prop}{Proposition}
\newtheorem{thm}{Theorem}
\newtheorem{cor}{Corollary}
\newtheorem{conj}{Conjecture}
\theoremstyle{definition}
\newtheorem{rem}{Remark}
\newtheorem{defn}{Definition}
\newcounter{numl}
\newcommand{\labelnuml}{\textup{(\roman{numl})}}
\newenvironment{numlist}{\begin{list}{\labelnuml}%
{\usecounter{numl}\setlength{\leftmargin}{0pt}%
\setlength{\itemindent}{2\parindent}%
\setlength{\itemsep}{\smallskipamount}\def
\makelabel ##1{\hss \llap {\upshape ##1}}}}{\end{list}}
\newenvironment{bulletlist}{\begin{list}{\labelitemi}%
{\setlength{\leftmargin}{0pt}\setlength{\itemindent}{\parindent}%
\setlength{\itemsep}{\smallskipamount}\def
\makelabel ##1{\hss \llap {\upshape ##1}}}}{\end{list}}
\newcommand{\R}{{\mathbb R}}
\newcommand{\C}{{\mathbb C}}
\newcommand{\T}{{\mathbb T}}% Torus
\newcommand{\cA}{{\mathcal A}}% Index set for base manifolds
\newcommand{\cB}{{\mathcal B}}% Blow-down index set
\newcommand{\cC}{{\mathcal C}}% Curve
\newcommand{\cE}{{\mathcal E}}% Line bundle
\newcommand{\cF}{{\mathcal F}}% Futaki
\newcommand{\cL}{{\mathcal L}}% Line bundle
\newcommand{\cM}{{\mathcal M}}% WBF function
\newcommand{\cO}{{\mathcal O}}% Line bundle on projective space
\newcommand{\cV}{{\mathcal V}}% Scheme
\newcommand{\cS}{{\mathcal S}}
\newcommand{\vE}{E}% Vector bundle
\newcommand{\Scal}{\mathit{Scal}}
\newcommand{\trace}{\mathop{\mathrm{tr}}\nolimits}
\newcommand{\ip}[1]{\langle #1 \rangle}
\newcommand{\del}{\partial}
\newcommand{\ang}{t}
\begin{document}
\title[Extremal K\"ahler metrics on projective bundles]
{Extremal K\"ahler metrics on projective bundles over a curve}

\author[V. Apostolov]{Vestislav Apostolov} \address{Vestislav Apostolov \\
D{\'e}partement de Math{\'e}matiques\\ UQAM\\ C.P. 8888 \\ Succ. Centre-ville
\\ Montr{\'e}al (Qu{\'e}bec) \\ H3C 3P8 \\ Canada}
\email{apostolov.vestislav@uqam.ca}

\author[D. Calderbank]{David M. J. Calderbank} \address{David M. J. Calderbank
\\ Department of Mathematical Sciences\\ University of Bath\\ Bath BA2 7AY\\
UK} \email{D.M.J.Calderbank@bath.ac.uk}

\author[P. Gauduchon]{Paul Gauduchon} \address{Paul Gauduchon \\ Centre de
Math\'ematiques\\ {\'E}cole Polytechnique \\ UMR 7640 du CNRS \\ 91128
Palaiseau \\ France} \email{pg@math.polytechnique.fr}

\author[C. T\o nnesen-Friedman]{Christina W.~T\o nnesen-Friedman}
\address{Christina W. T\o nnesen-Friedman\\ Department of Mathematics\\ Union
College\\ Schenectady\\ New York 12308\\ USA } \email{tonnesec@union.edu}

\begin{abstract} Let $M=P(E)$ be the complex manifold underlying the total
space of the projectivization of a holomorphic vector bundle $E \to \Sigma$
over a compact complex curve $\Sigma$ of genus $\ge 2$. Building on ideas
of Fujiki~\cite{fujiki}, we prove that $M$ admits a K\"ahler metric of
constant scalar curvature if and only if $E$ is polystable. We also address
the more general existence  problem of  extremal K\"ahler metrics on such
bundles and prove that the splitting of $E$ as a direct sum of stable
subbundles is necessary and sufficient condition for the existence of
extremal K\"ahler metrics in sufficiently small K\"ahler classes. The
methods used to prove the above results apply to a wider class of
manifolds, called {\it rigid toric bundles over a semisimple base}, which
are fibrations associated to a principal torus bundle over a product of
constant scalar curvature K\"ahler manifolds with fibres isomorphic to a
given toric K\"ahler variety. We discuss various ramifications of our
approach to this class of manifolds.
\end{abstract}
\keywords{Extremal and constant scalar curvature K\"ahler metrics; stable vector bundles;  projective bundles; toric fibrations}
\maketitle

\section{Introduction}

Extremal K\"ahler metrics were first introduced and studied by E.~Calabi in
\cite{cal-one,cal-two}. Let $(M, J)$ denote a connected compact complex
manifold of complex dimension $m$.  A K\"ahler metric $g$ on $(M, J)$, with
K\"ahler form $\omega = g (J \cdot, \cdot)$, is {\it extremal} if it is a
critical point of the functional $g \mapsto \int _M s _g ^2 \, \frac{\omega
_g ^m}{m!}$, where $g$ runs over the set of all K\"ahler metrics on $(M,J)$
within a fixed K\"ahler class $\Omega = [\omega]$, and $s_g$ denotes the
scalar curvature of $g$. As shown in \cite{cal-one}, $g$ is extremal if and
only if the symplectic gradient $K := {\rm grad} _{\omega} s _g = J \, {\rm
grad} _g s _g$ of $s _g$ is a Killing vector field (i.e. $\mathcal{L} _K g
= 0$) or, equivalently, a (real) holomorphic vector field
(i.e. $\mathcal{L} _K J = 0$). Extremal K\"ahler metrics include K\"ahler
metrics of constant scalar curvature --- CSC K\"ahler metrics for short ---
in particular K\"ahler--Einstein metrics.  Clearly, if the identity
component ${\rm Aut} _0 (M, J)$ of the automorphism group of $(M, J)$ is
reduced to $\{1\}$, i.e. if $(M, J)$ has no non-trivial holomorphic vector
fields, any extremal K\"ahler metric is CSC, whereas a CSC K\"ahler metric
is K\"ahler--Einstein if and only if $\Omega$ is a multiple of the (real)
first Chern class $c _1 (M, J)$.  In this paper, except for
Theorem~\ref{main} below, we will be mainly concerned with extremal
K\"ahler metrics of {\it non-constant} scalar curvature.

The {\it Lichnerowicz--Matsushima theorem} provides an obstruction to the
existence of CSC K\"ahler metrics on $(M, J)$ in terms of the structure of
${\rm Aut} _0 (M, J)$, which must be {\it reductive} whenever $(M, J)$
admits a CSC K\"ahler metric; in particular, for any CSC K\"ahler metric
$g$, the identity component ${\rm Isom} _0 (M, g)$ of the group of
isometries of $(M, g)$ is a {\it maximal} compact subgroup of $(M,
J)$~\cite{matsushima,lichne}. The latter fact remains true for any extremal
K\"ahler metric (although ${\rm Aut} _0 (M, J)$ is then no longer reductive
in general) and is again an obstruction to the existence of extremal
K\"ahler metrics~\cite{cal-two,levine}.  Another well-known obstruction to
the existence of CSC K\"ahler metrics within a given class $\Omega$
involves the {\it Futaki character}~\cite{futaki, cal-two}, of which a
symplectic version, as developed in \cite{lejmi}, will be used in this
paper (cf. Lemma~\ref{symplectic-futaki}).  Furthermore, it is now known
that extremal K\"ahler metrics within a fixed K\"ahler class $\Omega$ are
{\it unique} up to the action of the {\it reduced automorphism
group}\footnote{$\widetilde{{\rm Aut}} _0 (M, J)$ is the unique {\it linear
algebraic subgroup} of ${\rm Aut} _0 (M, J)$ such that the quotient ${\rm
Aut} _0 (M, J)/\widetilde{{\rm Aut}} _0 (M, J)$ is a torus, namely the
Albanese torus of $(M, J)$~\cite{fujiki-0}; its Lie algebra is the space of
(real) holomorphic vector fields whose zero-set is non-empty
~\cite{fujiki-0,kobayashi,Le-Sim,gauduchon-book}.}  $\widetilde{{\rm Aut}}
_0 (M, J)$~\cite{BM,CT,Do-one,mab-two}.

It was suggested by S.-T. Yau \cite{yau} that a complete obstruction to the
existence of extremal K\"ahler metrics in the K\"ahler class $\Omega = c _1
(L)/2 \pi$ on a projective manifold $(M, J)$ polarized by an ample
holomorphic line bundle $L$ should be expressed in terms of {\it stability}
of the pair $(M, L)$.  The currently accepted notion of stability is the
$K$-({\it poly}){\it stability} introduced by G.~Tian~\cite{Tian2} for Fano
manifolds and by S.~K.~Donaldson~\cite{Do2} for general projective
manifolds polarized by $L$.  The {\it Yau--Tian--Donaldson conjecture} can
then be stated as follows. {\it A polarized projective manifold $(M, L)$
admits a CSC K\"ahler metric if and only if it is $K$-polystable.}  This
conjecture is still open, but the implication `CSC $\Rightarrow$
{K-polystable}' in the conjecture is now well-established, thanks to work
by S.~K.~Donaldson \cite{Do-one}, X. Chen--G. Tian~\cite{CT}, J.~Stoppa
\cite{stoppa}, and T.~Mabuchi \cite{mab-three,mab-three1}.  In order to
account for extremal K\"ahler metrics of non-constant scalar curvature,
G.~Sz\'ekelyhidi introduced \cite{Sz, gabor} the notion of {\it
relative} $K$-(poly)stability with respect to a maximal torus of the
automorphism group of the pair $(M, L)$ --- which is the same as the
reduced automorphism group $\widetilde{{\rm Aut}} _0 (M, J)$ --- and the
similar implication `extremal $\Rightarrow$ {relatively K-polystable}' was
recently established by G.~Sz\'ekelyhidi--J. Stoppa~\cite{gabor-stoppa}.

The Yau--Tian--Donaldson conjecture was inspired by and can be regarded as
a non-linear counterpart of the well-known equivalence for a holomorphic
vector bundle over a compact K\"ahler manifold $(M, J, \omega)$ to be {\it
polystable} with respect to $\omega$ on the one hand and to admit a {\it
hermitian--Einstein metric}~\cite{kobayashi, uy, Do0} on the other.
In the case when $(M, J)$ is a Riemann surface this is the celebrated
theorem of Narasimhan and Seshadri \cite{NS}, which, in the geometric
formulation given in \cite{AB,Do-NS,fujiki} can be stated as follows. {\it
Let $E$ be a holomorphic vector bundle over a compact Riemann surface
$\Sigma$. Then, $E$ is polystable if and only if it admits a hermitian
metric whose Chern connection is projectively-flat}.

\smallskip

This paper is mainly concerned with the existence of extremal K\"ahler
metrics on {\it ruled manifolds}, i.e. when $(M, J)$ is the total space of
projective fibre bundles $P (E)$ where $E$ is a holomorphic vector bundle
over a compact K\"ahler manifold $(S, J_S, \omega_S)$ of constant scalar
curvature.  Notice that this class of complex manifolds includes most
explicitly known examples so far of extremal K\"ahler manifolds of
non-constant scalar curvature, starting with the first examples given by
E. Calabi in \cite{cal-one}.  For complex manifolds of this type one
expects stability properties of $(M, J)$ to be reflected in the stability
of the vector bundle $E$. In fact, such a link was established by
J.~Ross--R.~Thomas, with sharper results when the base is a compact Riemann
surface of genus at least $1$,~\cite[Theorem 5.12 and Theorem 5.13]{RT}
(cf. also Remark \ref{rem2} below.)  This suggests that the existence of
extremal K\"ahler metrics on $P(E)$ could be directly linked to the
stability of the underlying bundle $E$. First evidence of such a direct
link goes back to the work of Burns--deBartolomeis~\cite{BdB}, and many
partial results in this direction are now known, see e.g. the works of
N.~Koiso--Y.~Sakane~\cite{koi-sak-one, koi-sak-two},
A.~Fujiki~\cite{fujiki}, D.~Guan~\cite{guan}, C.~R.~LeBrun~\cite{LeB},
C. T\o nnesen-Friedman~\cite{christina-one}, Y.-J.~Hong~\cite{hong, hong2},
A.~Hwang--M.~Singer~\cite{hwang-singer}, Y.~Rollin--M.~Singer~\cite{RS},
J.~Ross--R.~Thomas~\cite{RS}, G.~Sz\'ekelyhidi~\cite{Sz, gabor}, and our
previous work~\cite{ACGT}. However, to the best of our knowledge, an
understanding of the precise relation is still to come.

While we principally focus on projective bundles, for which sharper results
can be obtained (Theorems~\ref{main}, \ref{th:extremal}, \ref{th:existence}
below), the techniques and a number of results presented in this paper
actually address a much wider class of manifolds, called {\it rigid toric
bundles over a semisimple base}, which were introduced in our previous
paper \cite{hfkg2}. Section~\ref{calabi-type} of this paper is devoted to
recalling the main features of this class of manifolds and proving a
general existence theorem (Theorem~\ref{th:small-classes}).

\smallskip

The simplest situation considered in this paper is the case of a projective
bundle over a {\it curve}, i.e. a compact Riemann surface.  In this case,
the existence problem for CSC K\"ahler metrics can be resolved.
\begin{thm}\label{main} Let $(M,J)=P(E)$ be a holomorphic projective bundle
over a compact complex curve of genus $\ge 2$.  Then $(M,J)$ admits a CSC
K\"ahler metric in some {\rm (}and hence any{\rm )} K\"ahler class if and only
if the underlying holomorphic vector bundle $E$ is polystable.
\end{thm}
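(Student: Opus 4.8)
The plan is to prove the two implications separately, noting first that the parenthetical ``some (and hence any)'' is purely formal once both are in hand: the forward implication will be established for an arbitrary K\"ahler class, while the construction proving the reverse implication will produce CSC metrics in \emph{every} K\"ahler class, so that existence in one class forces existence in all.

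For the reverse implication (polystable $\Rightarrow$ CSC) I would invoke the Narasimhan--Seshadri theorem in the geometric form recalled in the introduction: since $E$ is polystable over the curve $\Sigma$, it carries a Hermitian metric whose Chern connection is projectively flat. Projective flatness means exactly that $M=P(E)$ is a \emph{flat} $\mathbb{CP}^{r-1}$-bundle, i.e. $M=\widetilde\Sigma\times_\rho\mathbb{CP}^{r-1}$ for the universal cover $\widetilde\Sigma$ and a representation $\rho\colon\pi_1(\Sigma)\to PU(r)$. Equipping $\widetilde\Sigma$ with its hyperbolic (hence K\"ahler--Einstein, as genus $\ge2$) metric and the fibre $\mathbb{CP}^{r-1}$ with a Fubini--Study metric, the Riemannian product is $\pi_1(\Sigma)$-invariant because $\rho$ takes values in the isometry group $PU(r)$, so it descends to a K\"ahler metric on $M$; a Riemannian product of CSC K\"ahler metrics is CSC. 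Scaling the two factors independently yields a two-parameter family of CSC product metrics, and since $H^{1,1}(M)$ is two-dimensional, spanned by the fibre class and $c_1(\mathcal{O}_{P(E)}(1))$, these classes exhaust the (two-dimensional) K\"ahler cone, giving CSC metrics in every K\"ahler class.

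For the forward implication (CSC $\Rightarrow$ polystable) I would argue by ruling out the ways $E$ can fail to be polystable. Because $\Sigma$ has genus $\ge2$ it has no holomorphic vector fields, so $\Aut_0(M,J)$ consists of fibrewise automorphisms, with Lie algebra $H^0(\Sigma,\End E)/\C\cdot\Id$. Suppose $M$ carries a CSC metric; as it is extremal, $(M,L)$ is relatively K-polystable by the theorem of Sz\'ekelyhidi--Stoppa. First I dispose of the non-semistable case: a destabilizing subbundle $F\subset E$ of slope exceeding $\mu(E)$ yields, by the slope computation of Ross--Thomas, a test configuration of negative Donaldson--Futaki invariant, contradicting K-semistability; where $E$ carries extra symmetries the obstruction is seen more cheaply, either as non-reductivity of $\Aut_0(M,J)$ (Lichnerowicz--Matsushima) or as a nonvanishing Futaki character (Lemma~\ref{symplectic-futaki}). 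Thus $E$ is semistable.

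The crux is to upgrade semistability to polystability, and this I expect to be the main obstacle. In the semistable but non-polystable case---for instance a non-split extension of two non-isomorphic stable bundles of equal slope, where $\End E=\C$ so that $\Aut_0(M,J)$ is trivial---neither Lichnerowicz--Matsushima nor the Futaki character sees anything. My plan is to exploit the canonical Jordan--H\"older filtration, whose associated graded $\mathrm{gr}(E)=\bigoplus_i\mathrm{gr}_i(E)$ is polystable of the same slope: it furnishes a $\C^\times$-degeneration of $E$ to $\mathrm{gr}(E)$, hence a nontrivial test configuration with central fibre $P(\mathrm{gr}\,E)\not\cong M$ whose Donaldson--Futaki invariant vanishes because the slopes agree; K-polystability forbids such a configuration, and the contradiction forces the filtration to split. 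The hard part is to carry out this last step in the self-contained, moment-map spirit of Fujiki rather than importing the full analytic Yau--Tian--Donaldson correspondence: namely, to reduce an arbitrary CSC metric to one invariant under a maximal compact subgroup adapted to the bundle, and then to show directly that the induced Hermitian structure on $E$ is projectively flat, so that the ``only if'' part of Narasimhan--Seshadri delivers polystability.
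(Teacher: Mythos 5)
Your reverse implication is exactly the paper's (Remark~\ref{rem1}): Narasimhan--Seshadri gives a projectively flat $PU(m)$-structure, and the resulting locally symmetric product metrics exhaust the K\"ahler cone. For the forward implication you take a genuinely different route from the paper, namely the algebro-geometric one: deduce K-(poly)stability of $(M,L)$ from the CSC metric and then appeal to the Ross--Thomas equivalence between K-polystability of $P(E)$ and polystability of $E$. The paper explicitly acknowledges this route in Remark~\ref{rem2} as a way to recover the ``only if'' direction, so it is viable in principle; but as written your proposal does not complete it, and the step you flag as ``the hard part'' is precisely where all of the paper's actual work lives.

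Concretely, three things are missing. First, the implication CSC $\Rightarrow$ K-polystable (Mabuchi, building on Donaldson, Chen--Tian, Stoppa) is only formulated for a polarization, i.e.\ an integral class; to cover an arbitrary K\"ahler class you must perturb to nearby rational classes via LeBrun--Simanca openness \emph{and} check that the perturbed extremal metrics are still CSC, which requires knowing the Futaki character vanishes on nearby classes --- this is exactly the role of the paper's Lemma~\ref{futaki-inv} (equal slopes of the indecomposable summands), which your sketch does not supply. Second, your Jordan--H\"older degeneration argument asserts, but does not prove, that the induced test configuration is non-trivial with vanishing (relative) Donaldson--Futaki invariant; that computation is the content of \cite[Thm.~5.13]{RT}, not a consequence of ``the slopes agree.'' Third, and most importantly, your closing paragraph asks for a self-contained, Fujiki-style argument showing the Hermitian structure on $E$ is projectively flat, and offers nothing towards it. The paper's mechanism there is quite different from anything in your sketch: fix a maximal torus $\T$ in the isometry group, decompose $E=\bigoplus_{i=0}^{\ell}E_i$ into indecomposables (Lemma~\ref{decompose}), use the symplectic Futaki invariant (Lemma~\ref{symplectic-futaki}) together with Narasimhan--Ramanan small stable deformations to force equal slopes, then deform $E$ to the polystable bundle $E(t)=\bigoplus E_i(t)$ through a $\T$-invariant family of complex structures; the torus-equivariant implicit-function argument of Lemma~\ref{stability} (where maximality of $\T$ kills the cokernel of the Lichnerowicz operator) produces extremal metrics on $(M,J_t)$ converging to the given one, uniqueness forces these to be locally symmetric for $t\neq0$, and local symmetry passes to the limit at $t=0$. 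Without either that deformation argument or a complete execution of the K-stability route, the semistable-but-not-polystable case (e.g.\ a non-split extension of non-isomorphic stable bundles of equal slope, where $\mathrm{Aut}_0(M,J)$ is trivial and no Futaki or Matsushima obstruction is visible) remains unaddressed.
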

\begin{rem}\label{rem1} {\rm 
The `if' part follows from the theorem of Narasimhan and Seshadri: if $E$
is a polystable bundle of rank $m$ over a compact curve (of any genus),
then $E$ admits a hermitian--Einstein metric which in turn defines a flat
$PU(m)$-structure on $P(E)$ and, therefore, a family of locally-symmetric
CSC K\"ahler exhausting the K\"ahler cone of $P(E)$, see
e.g.~\cite{kobayashi}, \cite{fujiki}.  Note also that in the case when
$P(E)$ fibres over ${\mathbb C} P^1$, $E$ splits as a direct sum of line
bundles, and the conclusion of Theorem~\ref{main} still holds by the
Lichnerowicz--Matsushima theorem, see e.g.~\cite[Prop.~3]{ACGT}.}
\end{rem}

\begin{rem} \label{rem2} {\rm On all manifolds considered in
Theorem~\ref{main}, rational K\"ahler classes form a dense subset in the
K\"ahler cone.  By LeBrun--Simanca stability theorem~\cite[Thm.~A]{Le-Sim1}
and Lemma~\ref{futaki-inv} below it is then sufficient to consider the
existence problem only for an integral K\"ahler class (or polarization). In
this setting, it was shown by Ross--Thomas that any projective bundle $M =
P(E)$ over a compact complex curve of genus $\geq 1$ is K-poly(semi)stable
(with respect to some polarization) if and only if $E$ is
poly(semi)stable~\cite[Thm.~5.13]{RT}.  In view of this theorem, the ``only
if'' part of Theorem~\ref{main} can therefore be alternatively recovered
---for any genus $\geq 1$--- as a consequence of recent papers by
T.~Mabuchi \cite{mab-three,mab-three1}.  }\end{rem}

By the de Rham decomposition theorem, an equivalent differential geometric
formulation of Theorem~\ref{main} is that any CSC K\"ahler metric on
$(M,J)$ must be locally symmetric (see \cite[Lemma 8]{fujiki} and
\cite{LeB}). It is in this form that we are going to achieve our proof of
Theorem~\ref{main}, building on the work of A. Fujiki~\cite{fujiki}.  In
fact, \cite{fujiki} already proves Theorem~\ref{main} in the case when the
underlying bundle $E$ is {simple}, modulo the uniqueness of CSC K\"ahler
metrics, which is now known~\cite{CT,Do-one,mab-two}.

In view of this, the main technical difficulty in proving Theorem 1 is
related to the existence of automorphisms on $(M,J)=P(E)\mapsto
\Sigma$. The way we proceed is by fixing a maximal torus ${\T}$ (of
dimension $\ell$) in the identity component ${\rm Aut}_0(M,J)$ of the
automorphism group, and showing that it induces a decomposition of $E=
\bigoplus_{i=0}^{\ell} E_i$ as a direct sum of $\ell+1$ indecomposable
subbundles $E_i$, such that $\T$ acts by scalar multiplication on each
$E_i$ (see Lemma~\ref{decompose} below). By computing the Futaki invariant
of the $S^1$ generators of $\T$, we show that the slopes of $E_i$ must be
all equal, should a CSC K\"ahler metric exist on $P(E)$ (see
Lemma~\ref{futaki-inv} below).\footnote{For rational K\"ahler classes, this
conclusion can be alternatively reached by combining \cite[Thm.~5.3]{RT}
and \cite{Do4}.} Then, following the proof of \cite[Theorem~3]{fujiki}, we
consider small analytic deformations $E_i(t)$ of $E_i=E_i(0)$ with $E_i(t)$
being stable bundles for $t \neq 0$. This induces a $\T$-invariant
Kuranishi family $(M,J_t) \cong P(E(t)),$ where
$E(t)=\bigoplus_{i=0}^{\ell} E_i(t)$, with $(M,J)$ being the central fibre
$(M,J_0)$. We then generalize in Lemma~\ref{stability} the
stability-under-deformations results of \cite{Le-Sim,Le-Sim1, Fuj-Sch}, by
using the crucial fact that our family is invariant under a fixed {\it
maximal} torus. This allows us to show that any CSC (or more generally
extremal) K\"ahler metric $\omega_0$ on $(M,J_0)$ can be included into a
smooth family $\omega_t$ of extremal K\"ahler metrics on $(M,J_t)$. As
$E(t)$ is polystable for $t \neq 0$, the corresponding extremal K\"ahler
metric $\omega_t$ must be locally symmetric, by the uniqueness
results~\cite{CT, Do-one, mab-one}. This implies that $\omega_0$ is locally
symmetric too, and we conclude as in \cite[Lemma 8]{fujiki}.

\bigskip

We next consider the more general  problem of existence
of extremal K\"ahler metrics on the manifold $(M,J)= P(E)\to \Sigma$. Notice
that the deformation argument explained above is not specific to the CSC case,
but also yields that any extremal K\"ahler metric $\omega_0$ on $(M,J)=
P(E)\to \Sigma$ can be realized as a smooth limit (as $t \to 0$) of extremal
K\"ahler metrics $\omega_t$ on $(M,J_t) = P (E(t))$, where $E(t)
=\bigoplus_{i=0}^{\ell} E_i(t)$ with $E_i(t)$ being stable (and thus
projectively-flat and indecomposable) bundles over $\Sigma$ for $t\neq 0$, and
where $\ell$ is the dimension of a maximal torus $\T$ in the identity
component of the group of isometries of $\omega_0$. Unlike the CSC case (where
$E_i(t)$ must all have the same slope and therefore $E(t)$ is polystable), the
existence problem for extremal K\"ahler metrics on the manifolds $(M,J_t)$ is
not solved in general. The main working conjecture here is that such a metric
$\omega_t$ must always be {\it compatible} with the bundle structure (in a
sense made precise in Sect.~\ref{calabi-type} below). As we observe in
Sect.~\ref{partial}, if this conjecture were true it would imply that the
initial bundle $E$ must also split as a direct sum of stable subbundles (and
that $\omega_0$ must be compatible too). We are thus led to believe the
following general statement would be true.

\begin{conj}\label{con:1} A projective bundle $(M,J)=P(E)$ over a compact
curve of genus $\ge 2$ admits an extremal K\"ahler metric in some K\"ahler
class if and only if $E$ decomposes as a direct sum of stable subbundles.
\end{conj}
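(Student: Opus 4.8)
The plan is to establish Conjecture~\ref{con:1} as a theorem by combining the deformation machinery sketched for Theorem~\ref{main} with the compatibility results promised for rigid toric bundles over a semisimple base. Since this is stated as a conjecture rather than a theorem, I expect that a full unconditional proof is out of reach with the methods at hand; the honest plan is therefore to prove both implications modulo the main working conjecture identified in the text (that an extremal metric on $(M,J_t)=P(E(t))$ must be compatible with the bundle structure), and to give the one implication that is unconditional.

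Let me first deal with the implication that should be unconditional, namely \emph{split $\Rightarrow$ extremal}. Suppose $E=\bigoplus_{i=0}^{\ell} E_i$ with each $E_i$ stable. By Narasimhan--Seshadri each $E_i$ is projectively flat, so $P(E_i)$ carries a locally symmetric CSC K\"ahler metric as in Remark~\ref{rem1}. The total space $P(E)$ is then a rigid toric bundle over $\Sigma$ (with fibre the toric variety $P(\C^{r_0}\oplus\cdots\oplus\C^{r_\ell})$, where $r_i=\rk E_i$) associated to the principal $\T^\ell$-bundle built from the projectivised summands. The plan is to invoke the general existence theorem for such bundles in sufficiently small K\"ahler classes (Theorem~\ref{th:small-classes}, and for the projective-bundle case Theorem~\ref{th:existence}): since the base factors $P(E_i)$ all carry CSC metrics, one can construct a fibrewise-compatible extremal K\"ahler metric on $P(E)$ for small relative classes. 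This direction I expect to be a direct citation of the small-class existence result combined with Narasimhan--Seshadri, so it should go through cleanly.

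For the converse, \emph{extremal $\Rightarrow$ split}, I would argue exactly as outlined in the text. Fix a maximal torus $\T$ (of dimension $\ell$) in $\Isom_0(M,\omega_0)$ for a given extremal metric $\omega_0$; by Lemma~\ref{decompose} this induces a decomposition $E=\bigoplus_{i=0}^{\ell}E_i$ into indecomposable subbundles on which $\T$ acts by scalars. Choose $\T$-invariant analytic deformations $E_i(t)$ with $E_i(t)$ stable for $t\ne 0$, giving a $\T$-invariant Kuranishi family $(M,J_t)\cong P(E(t))$ with $E(t)=\bigoplus E_i(t)$. Using Lemma~\ref{stability} (the stability-under-deformation result relying on invariance under the fixed maximal torus), the extremal metric $\omega_0$ extends to a smooth family $\omega_t$ of extremal metrics on $(M,J_t)$. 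Here is where the working conjecture enters: assuming $\omega_t$ is compatible with the bundle structure, the analysis of Section~\ref{partial} forces each $E_i(t)$, and hence each $E_i$, to be stable — and in particular $E=\bigoplus E_i$ is a direct sum of stable subbundles.

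The main obstacle is precisely this last deduction of compatibility, which is the content of the general working conjecture and is not established in general in the excerpt. The crux is to rule out extremal metrics on $(M,J_t)=P(E(t))$ that are \emph{not} adapted to the projective-bundle structure; without such a rigidity statement one cannot transfer information about the metric back to the algebraic splitting of $E(t)$. I would therefore present the converse as conditional on this compatibility conjecture, and note that it holds unconditionally in the cases where compatibility is already known (for instance the CSC case treated in Theorem~\ref{main}, and the small-class regime of Theorem~\ref{th:existence}). A secondary technical point to check carefully is that the Kuranishi family can genuinely be chosen $\T$-invariant with all $E_i(t)$ simultaneously stabilised for $t\ne0$, which is exactly the role played by the maximality of $\T$ in Lemma~\ref{stability}; I would verify that the deformation parameters for the distinct indecomposable summands do not interfere, so that the limit $\omega_0$ inherits compatibility from the $\omega_t$.
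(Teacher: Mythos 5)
You have correctly identified that this statement is left as a conjecture in the paper, and your conditional argument follows the paper's own route: the `if' direction and the `only if' direction for classes $\Omega_k$ with $k\gg 0$ are exactly Theorem~\ref{th:extremal} (proved via Narasimhan--Seshadri together with Theorem~\ref{th:existence}, Lemma~\ref{decompose}, the stable deformations $E_i(t)$ and Lemma~\ref{stability}), while the general case rests on the compatibility statement formalized as Conjecture~\ref{con:2}. The one refinement worth noting is that for $k\gg 0$ the paper does not \emph{assume} compatibility of the deformed extremal metrics $\omega_t$: it \emph{deduces} it from the uniqueness of extremal K\"ahler metrics modulo automorphisms combined with the existence of a compatible extremal metric supplied by Theorem~\ref{th:existence}, which is precisely why the small-class case of the converse is unconditional.
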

\begin{rem} {\rm
This conjecture turns out to be true in the case when $E$ is of rank $2$
and $\Sigma$ is a curve of any genus, cf. \cite{AT} for an overview.}
\end{rem}

A partial answer to Conjecture \ref{con:1} is given by the following result
which deals with K\"ahler classes far enough from the boundary of the
K\"ahler cone.

\begin{thm}\label{th:extremal} Let $p\colon P(E) \to \Sigma$ be a holomorphic
projective bundle over a compact complex curve $\Sigma$ of genus $\ge 2$ and
$[\omega_{\Sigma}]$ be a primitive K\"ahler class on $\Sigma$. Then there
exists a $k_0 \in \R$ such that for any $k>k_0$ the K\"ahler class $\Omega_k =
2\pi c_1(\cO(1)_{E}) + k p^*[\omega_{\Sigma}]$ on $(M,J)=P(E)$ admits an
extremal K\"ahler metric if and only if $E$ splits as a direct sum of stable
subbundles.

In the case when $E$ decomposes as the sum of at most two indecomposable
subbundles,\footnote{This is equivalent to requiring that the automorphism
group of $P(E)$ has a maximal torus of dimension $\le 1$.} the conclusion
holds for any K\"ahler class on $P(E)$.
\end{thm}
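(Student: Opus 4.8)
The plan is to prove both implications by reducing, in each case, to the theory of \emph{rigid toric bundles over a semisimple base} developed in Section~\ref{calabi-type}, and in particular to the existence result Theorem~\ref{th:small-classes}. For the sufficiency, suppose $E=\bigoplus_{i=0}^{\ell}E_i$ with each $E_i$ stable. By the theorem of Narasimhan and Seshadri~\cite{NS}, each $E_i$ carries a hermitian metric whose Chern connection is projectively flat, so that each $P(E_i)\to\Sigma$ is a locally symmetric CSC K\"ahler manifold (cf. Remark~\ref{rem1}). These flat structures present $(M,J)=P(\bigoplus_i E_i)$ as a rigid toric bundle over a semisimple base in the sense of Section~\ref{calabi-type}, the fibre being ${\C}P^{m-1}$ with the maximal torus $\T$ acting by rescaling the summands $E_i$. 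I would then invoke Theorem~\ref{th:small-classes} to produce an extremal K\"ahler metric in every sufficiently small K\"ahler class, that is, in $\Omega_k$ for all $k$ larger than some $k_0$.

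For the necessity, suppose $\Omega_k$ admits an extremal K\"ahler metric $\omega_0$ for some $k>k_0$. I would fix a maximal torus $\T$ in $\Isom_0(M,\omega_0)$ and apply Lemma~\ref{decompose} to obtain a decomposition $E=\bigoplus_{i=0}^{\ell}E_i$ into indecomposable subbundles on each of which $\T$ acts by scalars; it then remains to upgrade ``indecomposable'' to ``stable''. Following the deformation scheme of Lemma~\ref{stability}, I would include $\omega_0$ into a smooth family $\omega_t$ of extremal metrics on the $\T$-invariant Kuranishi family $(M,J_t)\cong P(E(t))$, where $E(t)=\bigoplus_i E_i(t)$ and $E_i(t)$ is stable for $t\ne 0$. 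For $t\ne 0$ the bundle $E(t)$ is split and projectively flat, so $P(E(t))$ is a rigid toric bundle over a semisimple base; by the uniqueness of extremal metrics modulo $\widetilde{\Aut}_0(M,J_t)$ together with the explicitly compatible extremal metric furnished by Theorem~\ref{th:small-classes} in the small class $\Omega_k$, the metric $\omega_t$ must be compatible with the bundle structure. Passing to the limit $t\to 0$ within the fixed maximal torus, I would conclude that $\omega_0$ is compatible as well, and then, as explained in Section~\ref{partial}, compatibility forces each factor $P(E_i)$ to carry a CSC K\"ahler metric; Theorem~\ref{main} applied to $P(E_i)$ shows that $E_i$ is polystable, hence stable since it is indecomposable.

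The main obstacle is the compatibility step: proving that, in the class $\Omega_k$, an extremal metric is necessarily of the compatible (bundle-adapted) form is precisely the content of the working conjecture, which I can secure only when $k$ is large enough that $\Omega_k$ falls in the regime covered by Theorem~\ref{th:small-classes}; this is the source of the threshold $k_0$. When $E$ decomposes into at most two indecomposable summands the maximal torus $\T$ has dimension at most $1$, the extremal equation reduces along the fibre to a single Calabi-type ODE whose boundary and positivity conditions can be verified directly, and compatibility can then be established for \emph{every} K\"ahler class on $P(E)$ by the momentum-construction analysis of \cite{ACGT,hwang-singer}; this removes the restriction on $k$ and yields the final assertion.
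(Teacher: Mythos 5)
Your overall strategy matches the paper's: sufficiency via Narasimhan--Seshadri together with Theorem~\ref{th:small-classes} (in the form of Theorem~\ref{th:existence}), and necessity via Lemma~\ref{decompose}, the deformation of Lemma~\ref{stability}, and the identification of $\omega_t$ with the compatible extremal metric for $t\neq 0$ through uniqueness. The genuine gap is your step ``passing to the limit $t\to 0$ \dots\ I would conclude that $\omega_0$ is compatible as well.'' This is precisely the assertion the paper is careful \emph{not} to make: compatibility of an extremal metric on the central fibre is essentially the content of the working conjecture (Conjecture~\ref{con:2}, (1)$\Rightarrow$(2)), and it is not an obviously closed condition along the family. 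The entire rigid--semisimple package (the local product structure of the stable quotient, the flat projective unitary structure of the factors $P(E_i(t))$, the normal form \eqref{M}) is available only for $t\neq 0$, where the $E_i(t)$ are stable; indeed, if some $E_i$ were \emph{not} stable then $\omega_0$ could not be compatible, so your limit claim is equivalent to the theorem you are trying to prove and cannot simply be asserted. To make your route work you would have to show that rigidity and semisimplicity of the $\T$-action survive the $C^\infty$ limit and then re-invoke the classification of such metrics; the paper instead passes to the limit only at the level of the K\"ahler quotient metrics $(\hat g_t,\hat J_t)$ at a regular value of the moment map. For $t\neq 0$ these are locally symmetric modelled on the fixed space $\C P^{d_0}\times\cdots\times\C P^{d_\ell}\times{\mathbb H}$, a condition that is manifestly closed under smooth convergence; the de Rham theorem and the form of the covering transformations then yield representations $\pi_1(\Sigma)\to PU(d_i+1)$ and hence stability of each $E_i$, with no claim about $\omega_0$ itself.

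Two further points. First, you need the threshold $k_0$ supplied by Theorem~\ref{th:small-classes} to be \emph{uniform in $t$}, since the single extremal metric $\omega_t$ you wish to identify lives in the fixed class $\Omega_k$; the paper checks this explicitly by observing that here $|\cA|=1$ and the data $(\Delta,c,p,\Scal_\Sigma)$ entering the extremal equation \eqref{operator} are determined by the $c_1(E_i)$ and by $\Omega_k$, hence are independent of $t$ (one also needs the remark that every K\"ahler class on $(M,J_t)$, $t\neq 0$, is compatible, so that uniqueness applies to $\Omega_k$). Second, your treatment of the case of at most two summands misstates the input: what removes the restriction $k>k_0$ is not a direct solvability analysis of the fibrewise ODE in every class, but the theorem of \cite{ACGT} that \emph{any} extremal K\"ahler metric on $P(E_0(t)\oplus E_1(t))$, $t\neq 0$, in \emph{any} K\"ahler class is compatible with the $S^1$-action --- a structure/uniqueness statement replacing Theorem~\ref{th:existence} in the argument above --- together with the existence results of \cite{ACGT} for the sufficiency.
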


The proof of Theorem~\ref{th:extremal}, given in Section~\ref{partial},
will be deduced from a general existence theorem established in the much
broader framework of {\it rigid} and {\it semisimple} toric bundles
introduced in \cite{hfkg2}, whose main features are recalled in
Section~\ref{calabi-type} below. As explained in
Remark~\ref{multiplicity-free}, this class of manifolds is closely related
to the class of {\it multiplicity-free} manifolds recently discussed in
Donaldson's paper \cite{Do6}. Our most general existence result can be
stated as follows.

\begin{thm}\label{th:small-classes} Let $(g, \omega)$ be a compatible K\"ahler
metric on $M$, where $M$ is a rigid semisimple toric bundle over a CSC
locally product K\"ahler manifold $(S, g_S, \omega_S)$ with fibres
isomorphic to a toric K\"ahler manifold $(W, \omega_W, g_{W})$, as defined
in Sect.~\textup{3}. Suppose, moreover, that the fibre $W$ admits a {\rm
compatible} extremal K\"ahler metric. Then, for any $k \gg
0$, the K\"ahler class $\Omega_k = [\omega] + kp^*[\omega_S]$ admits
a compatible extremal K\"ahler metric.
\end{thm}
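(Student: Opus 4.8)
The plan is to reduce the extremal equation on the total space $M$ to a scalar ODE/PDE problem on the toric fibre $W$, perturbing away from the given compatible extremal metric on $W$ as the base-scaling parameter $k$ becomes large. Recall that for a rigid semisimple toric bundle, a compatible K\"ahler metric is encoded by its symplectic potential (or momentum profile) on the Delzant polytope $\Delta$ of $W$, and that the scalar curvature of such a metric has an explicit Abreu-type expression: it is an affine-in-the-fibre-coordinates combination of (i) the fibrewise Abreu operator applied to the symplectic potential, weighted by a polynomial $p_c(\mu)$ built from the constant holomorphic sectional data of the base factors, and (ii) a contribution $p_{nc}(\mu)$ coming from the CSC (locally product) base $(S,g_S,\omega_S)$. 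First I would write down this scalar-curvature formula for the class $\Omega_k=[\omega]+k\,p^*[\omega_S]$, track the dependence on $k$ explicitly, and identify the leading-order term as $k\to\infty$.

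The key observation driving the existence proof is that rescaling the base by $k$ has the effect of a large conformal-type rescaling of the weight polynomials: after normalising, the leading term of the extremal-metric equation on $(M,\Omega_k)$ is precisely the \emph{fibrewise} extremal equation for a compatible metric on $W$, for which a solution is assumed to exist by hypothesis. Concretely, I would set up the problem as follows. The extremal condition says that the scalar curvature $s$ is an affine function of the momenta $\mu$; equivalently, the Abreu-type equation equating $s$ to a linear function holds on $\Delta$. I would parametrise candidate solutions as $u = u_W + \tfrac1k v$ (symplectic potentials), where $u_W$ is the given extremal potential on the fibre, and then expand the extremal equation in powers of $1/k$. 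At order $k^0$ the equation reduces to the fibre extremal equation satisfied by $u_W$; at the next order one obtains a \emph{linear} elliptic equation for the correction $v$, whose solvability governs the construction.

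I would then carry out the analysis by an implicit-function-theorem (or Banach fixed-point) argument in appropriate weighted H\"older/Sobolev spaces of boundary-compatible symplectic potentials on the Delzant polytope $\Delta$. The required input is that the linearisation of the fibre extremal operator at $u_W$, restricted to the complement of the obvious kernel (coming from the Hamiltonians of the torus action and affine functions—i.e.\ the holomorphic vector fields one must quotient by), is an isomorphism onto the appropriate image space. This is exactly the statement that the deformation problem for the extremal metric on $W$ is unobstructed modulo the automorphisms, and it is the technical heart of the argument. Care is needed because the natural kernel of the extremal linearisation is nontrivial (the extremal vector field and the reduced automorphisms), so one must work transversally, projecting off this finite-dimensional space and absorbing it into a modification of the affine target function for $s$—this is the standard device of letting the "extremal affine function" vary together with the potential.

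The main obstacle I expect is precisely controlling the boundary behaviour and the elliptic estimates uniformly as $k\to\infty$: the Abreu operator degenerates at $\partial\Delta$ in the prescribed Guillemin-type manner, so one must use function spaces adapted to this degeneracy and verify that the inverse of the linearised operator is bounded uniformly in $k$ (after the correct $k$-dependent rescaling of the weight polynomials $p_c,p_{nc}$). Establishing this uniform invertibility, together with checking that the nonlinear remainder in the $1/k$ expansion is small in the chosen norms, is where the real work lies; once these two estimates are in hand, a contraction-mapping argument produces a genuine compatible extremal potential on $\Delta$ for all $k\gg0$, which assembles into the desired compatible extremal K\"ahler metric in the class $\Omega_k$ on $M$.
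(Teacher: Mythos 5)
Your overall strategy --- treat $k\to\infty$ as an adiabatic limit in which the extremal equation for $\Omega_k$ degenerates to the fibrewise extremal equation on $W$, and then perturb off the given compatible extremal metric on $W$ by an implicit function theorem --- is exactly the strategy of the paper. But there are two places where your plan, as written, has a genuine gap, and in both the paper takes a different technical route precisely to avoid the difficulty you defer.

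First, the uniformity issue you flag as ``where the real work lies'' is not something you need to confront at all if you notice that the operators $P_\lambda$ of \eqref{P-lambda} are invariant under rescaling of the data, $P_{t\lambda}=P_\lambda$ for $t\neq 0$. Replacing $\lambda_k=(c_a+k,p_a,\Scal_a)$ by $\lambda_k/k$, the parameters converge to a fixed limit point (the data of the compatible extremal equation on $W$ alone), so ``for all $k\gg 0$'' becomes ``for all $\lambda$ in a neighbourhood of $\lambda_\infty$''. One then needs only a single application of the implicit function theorem at $\lambda_\infty$, i.e.\ an openness statement in a finite-dimensional parameter (Proposition~\ref{perturbation}), with no uniform-in-$k$ control of inverses required. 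Your $u=u_W+\tfrac1k v$ expansion with a contraction argument could in principle be made to work, but you would then genuinely have to prove the uniform invertibility you only announce.

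Second, and more seriously, the functional-analytic setting you propose --- weighted H\"older/Sobolev spaces of boundary-compatible symplectic potentials on $\Delta$, adapted to the Guillemin-type degeneracy of the Abreu operator at $\partial\Delta$ --- is exactly the setting the authors state they were unable to control (``our lack of understanding of the convergence in this space''). The paper circumvents it by rewriting $P_\lambda(U)$ as a geometric operator $Q_\lambda$ on the compact manifold $W$ itself (Lemma~\ref{geometric} and Lemma~\ref{lem:compatible}), acting on K\"ahler potentials in ordinary Sobolev spaces $L^{2,k}_{\perp}(W,\Delta)$, where it is an honest quasi-elliptic fourth-order operator. Relatedly, the invertibility of the linearisation is \emph{not} ``exactly'' the unobstructedness of the deformation problem modulo automorphisms: the linearisation must be shown to be an isomorphism on the smaller subspace $C^\infty_\perp(\Delta)$ of $\T$-invariant functions that are pullbacks of functions of the momentum $z$ (orthogonal to affine functions), and the surjectivity onto this subspace does not follow formally from the LeBrun--Simanca theory on ${C^\infty_\perp(M)}^G$. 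It requires the explicit fibrewise decomposition of the Lichnerowicz operator with respect to \eqref{M} and an integration-by-parts argument showing that a putative cokernel element $u=u(z)$ must annihilate $L(f)$ for \emph{all} $G$-invariant $f$, not just those pulled back from $\Delta$. Without this refinement your linearised problem is not known to be solvable in the class of compatible (i.e.\ $z$-dependent) perturbations, which is what the theorem asserts.
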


The terms of this statement, in particular the concept of a {\it
compatible} metric, are introduced in Section~\ref{calabi-type}. Its proof,
also given in Section \ref{calabi-type}, uses in a crucial way the
stability under small perturbations of existence of compatible extremal
metric (Proposition~\ref{perturbation}) which constitutes the delicate
technical part of the paper. Another important consequence of
Proposition~\ref{perturbation} is the general openness theorem given by
Corollary~\ref{corollary1}.

\smallskip

A non-trivial assumption in the hypotheses of
Theorem~\ref{th:small-classes} above is the existence of {\it compatible}
extremal K\"ahler metric on the (toric) fibre $W$. This is solved when $W
\cong \C P^{r}$ and $M=P(E)$ with $E$ being holomorphic vector bundle of
rank $r+1$, which is the sum of $\ell+1$ projectively-flat hermitian
bundles, as a consequence of the fact that the Fubini--Study metric on $\C
P^{r}$ admits a non-trivial hamiltonian $2$-form of order $\ell \le r$
(cf.\cite{hfkg1}). We thus derive in Sect.~\ref{sec:proof-existence} the
following existence result.
\begin{thm}\label{th:existence}  Let $p\colon P(E) \to S$ be a holomorphic
projective bundle over a compact K\"ahler manifold $(S, J_S,
\omega_S)$. Suppose that $(S,J_S,\omega_S)$ is covered by the product of
constant scalar curvature K\"ahler manifolds $(S_j,\omega_j), \ j=1, \ldots,
N$, and $E= \bigoplus_{i=0}^{\ell} E_i$ is the direct sum of projectively-flat
hermitian bundles. Suppose further that for each $i$ $c_1(E_i)/{\rm rk}(E_i)-
c_1(E_0)/{\rm rk}(E_0)$ pulls back to $\sum_{j=1}^N p_{ji} [\omega_j]$ on
$\prod_{j=1}^N S_j$ (for some constants $p_{ji}$).  Then there exists a $k_0
\in \R$ such that for any $k>k_0$ the K\"ahler class $\Omega_k = 2\pi
c_1(\cO(1)_{E}) + k p^*[\omega_{S}]$ admits a compatible extremal K\"ahler
metric.
\end{thm}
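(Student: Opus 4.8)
The plan is to deduce Theorem~\ref{th:existence} from the general existence result Theorem~\ref{th:small-classes} by exhibiting $p\colon P(E)\to S$ as a rigid semisimple toric bundle whose toric fibre is $\C P^{r}$, $r=\rk(E)-1$, and then verifying that this fibre carries a \emph{compatible} extremal K\"ahler metric. First I would use the decomposition $E=\bigoplus_{i=0}^{\ell}E_i$ into projectively-flat hermitian bundles to produce the structure torus: since the $\ell+1$ summands are preserved up to scale, the rank-$\ell$ torus $\T=(S^1)^{\ell+1}/S^1$ acts fibrewise on $P(E)$, and the projectively-flat hermitian metrics on the $E_i$ equip the associated principal $\T$-bundle with a natural connection. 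The base $(S,\omega_S)$, being covered by the product $\prod_{j=1}^{N}(S_j,\omega_j)$ of CSC K\"ahler manifolds, is precisely the CSC locally product base required by Theorem~\ref{th:small-classes}.

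The next step is to match the curvature data. The Chern connection of the projectively-flat hermitian metric on $E_i$ has curvature of the form $\gamma_i\otimes\Id$, where $[\gamma_i]$ is proportional to $c_1(E_i)/\rk(E_i)$, so the curvature of the principal $\T$-bundle is governed by the differences $c_1(E_i)/\rk(E_i)-c_1(E_0)/\rk(E_0)$. The hypothesis that these classes pull back to $\sum_{j=1}^{N}p_{ji}[\omega_j]$ is exactly the \emph{rigidity} and \emph{semisimplicity} condition: since the $\omega_j$ are parallel, the harmonic representative of this class is $\sum_j p_{ji}\omega_j$, and the connection can be chosen so that $d\theta_i=\sum_j p_{ji}\omega_j$ with constant coefficients $p_{ji}$. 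Thus $P(E)\to S$ is a rigid semisimple toric bundle in the sense of Section~\ref{calabi-type}, with toric fibre $W=\C P^{r}$, the fibre K\"ahler class being the Fubini--Study class obtained by restricting $2\pi c_1(\cO(1)_{E})$.

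The one genuinely non-trivial hypothesis of Theorem~\ref{th:small-classes} is the existence of a compatible extremal K\"ahler metric on the fibre $W=\C P^{r}$. Here I would invoke the result of \cite{hfkg1} that the Fubini--Study metric admits a non-trivial hamiltonian $2$-form of order $\ell\le r$ adapted to the block decomposition $\C^{r+1}=\bigoplus_{i=0}^{\ell}\C^{\rk(E_i)}$. This hamiltonian $2$-form exhibits the Fubini--Study metric as a compatible metric for the rank-$\ell$ torus action, and as it is CSC it is a fortiori extremal; hence $W$ carries the required compatible extremal K\"ahler metric.

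With all hypotheses in place the conclusion follows by scaling: fixing $k_1$ large enough that $2\pi c_1(\cO(1)_{E})+k_1\,p^*[\omega_S]$ is represented by a compatible K\"ahler metric $\omega$, I would write $\Omega_k=[\omega]+(k-k_1)\,p^*[\omega_S]$ and apply Theorem~\ref{th:small-classes} to obtain a compatible extremal K\"ahler metric in $\Omega_k$ for all $k\gg0$. The main obstacle is not this final invocation but the structural identification in the first two steps---checking that the projectively-flat block data genuinely places $P(E)$ in the rigid semisimple framework of Section~\ref{calabi-type}, and that the cohomological slope condition upgrades to the pointwise constant-curvature condition on the principal torus bundle---together with the fibre input from \cite{hfkg1}, which supplies the substantive analytic content.
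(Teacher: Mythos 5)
Your proposal is correct and follows essentially the same route as the paper: identify $P(E)\to S$ as a rigid semisimple toric bundle with fibre $W=\C P^{r}$ via the block decomposition and the projectively-flat hermitian connections (this is the content of Sect.~\ref{s:projective bundles}), supply the compatible extremal metric on the fibre from the hamiltonian $2$-form structure of the Fubini--Study metric \cite{hfkg1}, and then rewrite $\Omega_k$ as a fixed compatible class plus a large multiple of $p^*[\omega_S]$ so that Theorem~\ref{th:small-classes} applies. Your final scaling step is a harmless variant of the paper's bookkeeping, which instead starts from an arbitrary compatible form $\omega$ and checks that $[\omega]-2\pi c_1(\cO(1)_E)$ is diagonal with respect to the product structure of the cover.
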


\begin{rem}\label{rem3} Theorem~\ref{th:existence}  is  closely related to
the results of Y.-J.~Hong in \cite{hong,hong2} who proves, under a
technical assumption on the automorphism group of $S$, that for any
hermitian--Einstein (i.e.~polystable) bundle $E$ over a CSC K\"ahler
manifold $S$, the K\"ahler class $\Omega_k = 2\pi c_1(\cO(1)_{E}) + k
p^*[\omega_{S}]$ for $k\gg 1$ admits a compatible CSC K\"ahler metric if
and only if the corresponding Futaki invariant $\mathfrak{F}_{\Omega_k}$
vanishes. However, in the case when $E$ is not simple (i.e. has
automorphisms other than multiples of identity) the condition
$\mathfrak{F}_{\Omega_k} \equiv 0$ is not in general satisfied for these
classes, see \cite[Sect.~3.4 \& 4.2]{ACGT} for specific examples. Thus,
studying the existence of extremal rather than CSC K\"ahler metrics in
$\Omega_k$ is essential. Another useful remark is that although the
hypothesis in Theorem~\ref{th:existence} that $E$ is the sum of {\it
projectively-flat} hermitian bundles over $S$ is rather restrictive when
$S$ is not a curve, our result strongly suggests that considering $E$ to be
a direct sum of stable bundles (with not necessarily equal slopes) over a
CSC K\"ahler base $S$ would be the right general setting for seeking
extremal K\"ahler metrics in $\Omega_k = 2\pi c_1(\cO(1)_{E}) + k
p^*[\omega_{S}]$ for $k \gg 1$.
\end{rem}

In the final Sect.~\ref{discussion}, we develop further our approach by
extending the leading conjectures~\cite{Do2,Sz} about existence of extremal
K\"ahler metrics on toric varieties to the more general context of {\it
compatible} K\"ahler metrics that we consider in this paper. Thus
motivated, we explore in a greater detail examples when $M$ is a projective
plane bundle over a compact complex curve $\Sigma$. We show that when the
genus of $\Sigma$ is greater than $1$, K\"ahler classes close to the
boundary of the K\"ahler cone of $M$ do not admit any extremal K\"ahler
metric.  In Appendix~\ref{app:almost-kahler}, we introduce the notion of a
{\it compatible extremal almost K\"ahler metric} (the existence of which is
conjecturally equivalent to the existence of a genuine extremal K\"ahler
metric) and show that if the genus of $\Sigma$ is $0$ or $1$, then {\it
any} K\"ahler class on $M$ admits an explicit compatible extremal almost
K\"ahler metric.

\medbreak

The first author was supported in part by an NSERC discovery grant, the
second author by an EPSRC Advanced Research Fellowship and the fourth
author by the Union College Faculty Research Fund.
\section{Proof of Theorem~\ref{main}}\label{s:part1}

As we have already noted in Remark~\ref{rem1}, the `if' part of the
theorem is well-known. So we deal with the `only if' part.

Let $(M,J) = P(E)$, where $\pi\colon E \to \Sigma$ is a holomorphic vector
bundle of rank $m$ over a compact curve $\Sigma$ of genus $\ge 2$.  We want
to prove that $E$ is polystable if $(M,J)$ admits a CSC K\"ahler metric
$\omega$.  Also, by Remark~\ref{rem1}, we will be primarily concerned with
the case when the connected component of the identity ${\rm Aut}_0(M,J)$ of
the automorphisms group of $(M,J)$ is not trivial. Note that, as the normal
bundle to the fibres of $P(E) \to \Sigma$ is trivial and the base is of
genus $\ge 2$, the group ${\rm Aut}_0(M,J)$ reduces to $H^0(\Sigma,
PGL(E))$, the group of fibre-preserving automorphisms of $E$, with Lie
algebra $\mathfrak{h}(M,J) \cong H^0(\Sigma, \mathfrak{sl}(E))$. As any
holomorphic vector field in $\mathfrak{h}(M,J)$ has zeros, the
Lichnerowicz--Matsushima theorem~\cite{lichne,matsushima} implies
$\mathfrak{h}(M,J) = \mathfrak{i}(M,g) \oplus J \mathfrak{i}(M,g),$ where
$\mathfrak{i}(M,g)$ is the Lie algebra of Killing vector fields of $(M,J,
\omega)$. Thus, ${\rm Aut}_0(M,J) \neq \{ {\rm Id} \}$ iff
$\mathfrak{h}(M,J) \neq \{ 0 \}$ iff $\mathfrak{i}(M,g) \neq \{ 0 \}$. We
will fix from now on a maximal torus $\T$ (of dimension $\ell$) in the
connected component of the group of isometries of $(M,J,\omega)$. Note that
$\T$ is a maximal torus in ${\rm Aut}_0(M,J)$ too, by the
Lichnerowicz--Matsushima theorem cited above.

We will complete the proof in three steps, using several lemmas.

\smallskip
We start with following elementary but useful observation which allows us to
relate a maximal torus $\T\subset {\rm Aut}_0(M,J)$ with the structure of $E$.

\begin{lemma}\label{decompose} Let $(M,J)=P(E) \to S$ be a projective bundle
over a compact complex manifold $S$, and suppose that the group
$H^0(S,PGL(E))$ of fibre-preserving automorphisms of $(M,J)$ contains a circle
$S^1$. Then $E$ decomposes as a direct sum $E= \bigoplus_{i=0}^{\ell}E_i$ of
subbundles $E_i$ with $\ell \ge 1$, such that $S^1$ acts on each factor $E_i$
by a scalar multiplication.

In particular, any maximal torus $\T\subset H^0(S,PGL(E))$ arises from a
splitting as above, with $E_i$ indecomposable and $\ell= {\rm dim} (\T )$.
\end{lemma}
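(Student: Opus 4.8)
The plan is to reduce the statement about the \emph{projective} automorphism group to a genuine \emph{linear} action of $S^1$ (resp.\ of $\T$) on $E$, and then read off the decomposition from the weight spaces of that linear action. First I would produce the generator: since $S^1\subset H^0(S,PGL(E))$, its infinitesimal generator lies in $H^0(S,\mathfrak{pgl}(E))$, where $\mathfrak{pgl}(E)=\End(E)/\cO_S\cdot\Id$. The trace splitting $\End(E)=\mathfrak{sl}(E)\oplus\cO_S\cdot\Id$ is canonical, so this generator lifts to a \emph{traceless} global holomorphic endomorphism $X\in H^0(S,\mathfrak{sl}(E))$. This $X$ need not integrate to a circle in $GL(E)$, but its projectivization is $2\pi$-periodic, and the key step is to correct it by a scalar so that it does.

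Next comes the period computation. Since $\exp(2\pi X)$ projects to the identity of $H^0(S,PGL(E))$, it is a section of scalar automorphisms, i.e.\ $\exp(2\pi X)=c\,\Id$ with $c\in H^0(S,\cO_S^\times)=\C^\times$ constant ($S$ being compact and connected). Taking determinants and using $\trace X=0$ gives $c^m=1$, so $c=e^{2\pi i k/m}$ for some integer $k$. Replacing $X$ by $X'=X-\tfrac{ik}{m}\Id$, which has the same image in $\mathfrak{pgl}(E)$ and hence induces the \emph{same} action on $P(E)$, yields $\exp(2\pi X')=\Id$; thus $\theta\mapsto\exp(\theta X')$ is an honest holomorphic $S^1$-action on $E$ lifting the given circle on $P(E)$. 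Averaging an arbitrary Hermitian metric over this $S^1$ makes $X'$ fibrewise skew-Hermitian, so it is diagonalisable with purely imaginary eigenvalues $ij$, $j\in\Z$ (forced by $\exp(2\pi X')=\Id$); as $X'$ is holomorphic with constant eigenvalues on the compact connected $S$, each eigenspace $E_{(j)}=\ker(X'-ij\,\Id)$ is a holomorphic subbundle of constant rank. Collecting the distinct weights occurring gives $E=\bigoplus_{i=0}^{\ell}E_i$ with $S^1$ acting on $E_i$ by the scalar $e^{ij_i\theta}$. If $\ell=0$ then $S^1$ would act by a single scalar, hence trivially on $P(E)$, contradicting $S^1\subset H^0(S,PGL(E))$; so $\ell\ge1$. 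This settles the first assertion.

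For the ``in particular'' statement I would run the same argument simultaneously for a set of generators of a maximal torus $\T$. Lifting each generator to its traceless representative $\tilde X_a\in H^0(S,\mathfrak{sl}(E))$, the commutator $[\tilde X_a,\tilde X_b]$ is traceless \emph{and} lies in $\cO_S\cdot\Id$ (since the generators commute in $\mathfrak{pgl}(E)$), hence vanishes; so the traceless lifts commute exactly, and after the same root-of-unity corrections they integrate to an honest torus action on $E$ inducing $\T$ on $P(E)$. The joint weight-space decomposition then yields $E=\bigoplus_{i=0}^{\ell}E_i$ with $\T$ acting on each $E_i$ by a (unitary, for an invariant metric) character, i.e.\ by scalar multiplication.

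It remains to identify $\ell$ with $\dim\T$ and to prove the $E_i$ indecomposable, and this is where maximality enters and is the real content of the second part. Given the decomposition into $\T$-weight spaces, the automorphisms acting by an independent unitary scalar on each $E_i$ form a compact torus $\widehat{\T}\subset H^0(S,PGL(E))$ of dimension $\ell$ (the $\ell+1$ scalars modulo the overall one), and $\widehat{\T}\supseteq\T$ because $\T$ acts by scalars on each weight space. Since $\T$ is maximal among compact tori, $\T=\widehat{\T}$ and therefore $\dim\T=\ell$. Finally, were some $E_i$ to split holomorphically as $E_i'\oplus E_i''$ with both summands nonzero, scalars acting independently on this finer decomposition would generate a compact torus of dimension $\ell+1$ strictly containing $\T$, a contradiction; hence each $E_i$ is indecomposable. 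I expect the lifting of the projective action to a linear one, namely making the scalar correction work coherently over the whole base $S$ via the period and commutator computations, to be the main point to get right; once the genuine linear action is in hand, the weight decomposition and the maximality argument are essentially formal.
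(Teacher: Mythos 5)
Your proof is correct and follows essentially the same route as the paper's: both reduce the statement to a global holomorphic traceless endomorphism generating the action, show it is fibrewise diagonalizable with constant eigenvalues, and take the eigenbundles as the $E_i$. The only real variation is that you obtain diagonalizability by correcting the lift with a root-of-unity scalar so that it integrates to a genuine circle action on $E$ and then averaging a Hermitian metric, where the paper invokes semisimplicity of the generator of a $\C^{\times}$-subgroup of $SL(E_x)$; your detailed treatment of the ``in particular'' clause (which the paper dismisses as following easily) via maximality of the torus of componentwise scalars is the intended argument.
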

\begin{proof} Any $S^1$ in $H^0(S,PGL(E))$ defines a ${\mathbb C}^{\times}$
holomorphic action on $(M,J)$, generated by an element $\Theta \in
\mathfrak{h}(M,J) \cong H^0(S, \mathfrak{sl}(E))$. For any $x\in S$, $\exp{
(t\Theta(x))}, \ t \in \C$ generates a $\C^{\times}$ subgroup of $SL(E_x)$ and
so $\Theta(x)$ must be diagonalizable. The coefficients of the characteristic
polynomial of $\Theta(x)$ are holomorphic functions of $x \in S$, and
therefore are constants. It then follows that $\Theta$ gives rise to a direct
sum decomposition $E= \bigoplus_{i=0}^{\ell} E_i$ where $E_i$ correspond to
the eigenspaces of $\Theta$ at each fibre.

The second part of the lemma follows easily.
\end{proof}

Because of this result and the discussion preceding it, we consider the
decomposition $E = \bigoplus_{i=0}^{\ell} E_i$ as a direct sum of
indecomposable subbundles over $\Sigma$, corresponding to a fixed, maximal
$\ell$-dimensional torus $\T $ in the connected component of the isometry
group of $(g,J,\omega)$. We note that the isometric action of $\T$ is {\it
hamiltonian} as $\T$ has fixed points (on any fibre).

\smallskip
Our second step is to understand the condition that that the Futaki
invariant~\cite{futaki}, with respect to the K\"ahler class
$\Omega=[\omega]$ on $(M,J)$, restricted to the generators of $\T$ is zero.
Hodge theory implies that any (real) holomorphic vector field with zeros on
a compact K\"ahler $2m$-manifold $(M,J, \omega, g)$ can be written as $X=
{\rm grad}_{\omega} f - J {\rm grad}_{\omega} h$, where $f + i h$ is a
complex-valued smooth function on $M$ of zero integral (with respect to the
volume form $\omega^m$), called the {\it holomorphy potential} of $X$, and
where ${\rm grad}_{\omega} f$ stands for the hamiltonian vector field
associated to a smooth function $f$ via $\omega$. Then the (real) Futaki
invariant associates to $X$ the real number
\[
{\mathfrak F}_{\omega} (X) = \int_M f Scal_g \ \omega^m,
\]
where $Scal_g$ is the scalar curvature of $g$. Futaki shows~\cite{futaki} that
${\mathfrak F}_{\omega} (X)$ is independent of the choice of $\omega$ within a
fixed K\"ahler class $\Omega$, and that (trivially) ${\mathfrak F}_{\omega}
(X)=0$ if $\Omega$ contains a CSC K\"ahler metric. A related observation will
be useful to us: with a fixed symplectic form $\omega$, the Futaki invariant
is independent of the choice of compatible almost complex structure within a
path component.

\begin{lemma}\label{symplectic-futaki} Let $J_t$ be a smooth family of
integrable almost-complex structures compatible with a fixed symplectic form
$\omega$, which are invariant under a compact group $G$ of symplectomorphisms
acting in a hamiltonian way on the compact symplectic manifold $(M,\omega)$.
Denote by $\mathfrak {g}_{\omega} \subset C^{\infty}(M)$ the finite
dimensional vector space of smooth functions $f$ such that $X = {\rm
grad}_{\omega} f \in \mathfrak{g}$, where $\mathfrak{g}$ denotes the Lie
algebra of $G$.\footnote{We will tacitly identify throughout the Lie algebra
$\mathfrak g$ of a group $G$ acting effectively on $M$ with the Lie algebra of
vector fields generated by the elements of $\mathfrak{g}$.} Then the
$L^2$-orthogonal projection of the scalar curvature $Scal_{g_t}$ of
$(J_t,\omega, g_t)$ to $\mathfrak{g}_{\omega}$ is independent of $t$.
\end{lemma}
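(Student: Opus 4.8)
The plan is to reduce the statement to the $t$-independence of finitely many integrals and then to identify these as values of the symplectic Futaki invariant, whose constancy is the real content. First I would observe that, since $\omega$ is held fixed, both the volume form $\omega^m$ and hence the $L^2$ pairing $\langle a,b\rangle = \int_M a\,b\,\omega^m$ on $C^\infty(M)$ are independent of $t$; moreover the subspace $\mathfrak{g}_\omega = \{f : {\rm grad}_\omega f \in \mathfrak{g}\}$ depends only on the fixed data $(\omega,\mathfrak{g})$, so it too is independent of $t$ and finite dimensional (the map $f \mapsto {\rm grad}_\omega f$ has kernel the constants and image in $\mathfrak{g}$). Consequently the $L^2$-orthogonal projection $\Pi\colon C^\infty(M)\to\mathfrak{g}_\omega$ is a single fixed operator. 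Since $\Pi(Scal_{g_t})$ is the unique element of the fixed space $\mathfrak{g}_\omega$ determined by the numbers $\langle Scal_{g_t}, f\rangle$, with $f$ ranging over a basis of $\mathfrak{g}_\omega$, it suffices to show that
\[ t \longmapsto \int_M Scal_{g_t}\, f\, \omega^m \]
is constant for each $f\in\mathfrak{g}_\omega$.

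The heart of the argument is then to differentiate this integral in $t$ and show the derivative vanishes. The key structural input is that $G$-invariance of the family forces $X := {\rm grad}_\omega f \in \mathfrak{g}$ to be simultaneously $\omega$-Hamiltonian, $J_t$-holomorphic ($\mathcal{L}_X J_t = 0$, as $G$ preserves each $J_t$) and $g_t$-Killing (as $X$ preserves both $\omega$ and $J_t$), for every $t$. Writing $h = \dot g_t = \omega(\cdot,\dot J_t\,\cdot)$, the relation $\dot J_t J_t + J_t \dot J_t = 0$ shows that $h$ is $J_t$-anti-invariant, in particular ${\rm tr}_{g_t} h = 0$; feeding this into the standard first-variation formula for the scalar curvature and integrating by parts twice against the fixed function $f$ gives
\[ \frac{d}{dt}\int_M Scal_{g_t}\, f\, \omega^m = \int_M \big\langle h,\; (\nabla df)^- \big\rangle_{g_t}\, \omega^m, \]
where $(\nabla df)^-$ denotes the $J_t$-anti-invariant part of the Hessian of $f$ (the $J_t$-invariant terms, including the contribution of $\Ric_{g_t}$, pair to zero against the anti-invariant $h$). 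Finally I would invoke the standard K\"ahler identity that $(\nabla df)^- = 0$ precisely when ${\rm grad}_\omega f$ is holomorphic; since this holds here for every $t$, the derivative vanishes and the integral is constant.

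Conceptually this is nothing but the Donaldson--Fujiki description of $J \mapsto Scal_g$ as a moment map for the action of ${\rm Ham}(M,\omega)$ on the space of compatible (integrable) almost complex structures: because each $J_t$ is $G$-invariant it is a fixed point of the induced $G$-action, so the infinitesimal action of $\mathfrak{g}$ vanishes at $J_t$ and the derivative of the moment-map pairing with $\mathfrak{g}$ is automatically zero --- this is exactly the symplectic Futaki invariant of \cite{lejmi}. The step I expect to be the main obstacle is precisely this vanishing of the derivative: one must get the first-variation formula and the double integration by parts exactly right and, more importantly, correctly identify the anti-invariant Hessian $(\nabla df)^-$ as the obstruction to ${\rm grad}_\omega f$ being holomorphic, which is where both the integrability of $J_t$ and, crucially, its $G$-invariance enter. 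The reduction in the first paragraph and the bookkeeping of invariant versus anti-invariant tensor types are routine by comparison.
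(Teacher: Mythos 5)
Your proof is correct and follows essentially the same route as the paper's: reduce to the constancy of $\int_M f\,Scal_{g_t}\,\omega^m$ for $f\in\mathfrak{g}_\omega$, apply the first variation of scalar curvature with the $J_t$-anti-invariance of $h=\dot g_t$ killing the trace and Ricci terms, integrate by parts twice, and conclude because the Hessian $Ddf$ of a Killing potential is $J_t$-invariant (equivalently, $(Ddf)^-=0$). The only cosmetic difference is that the paper phrases the last step as the vanishing of the pairing $\int_M g_t(h,Ddf)\,\omega^m$ between an anti-invariant and an invariant tensor, rather than isolating $(Ddf)^-$ first.
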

\begin{proof} 
By definition, any $f \in \mathfrak{g}_{\omega}$ defines a
vector field $X= {\rm grad}_{\omega} f$ which is in $\mathfrak{g}$, and is
therefore Killing with respect to any of the K\"ahler metrics $g_t= \omega(
\cdot, J_t \cdot)$. To prove our claim, we have to show that $\int_M f
Scal_{g_t} \omega^m$ is independent of $t$. Using the standard variational
formula for scalar curvature (see e.g.~\cite[Thm.~1.174]{besse}), we compute
\begin{equation}\label{variation}
\frac{d}{dt} Scal_{g_t} = \Delta ({\rm tr}_{g_t} h) + \delta \delta h - g_t(r,
h) = \delta \delta h,
\end{equation}
where $h$ denotes $\frac{d}{dt} g_t$, while $\Delta, \delta$ and $r$ are the
riemannian laplacian, the codifferential and the Ricci tensor of $g_t$,
respectively. Note that to get the last equality, we have used the fact that
$h$ is $J_t$-anti-invariant (as all the $J_t$'s are compatible with $\omega$)
while the metric and the Ricci tensor are $J_t$-invariant (on any K\"ahler
manifold). Integrating against $f$, we obtain
\[
\frac{d}{dt} \Big(\int_M f Scal_{g_t} \omega^m \Big) = \int_M (\delta \delta
h) f\omega^m = \int_M g_t(h, D d f) \omega^m,
\]
where $D$ is the Levi--Civita connection of $g_t$; however, as $f$ is a
Killing potential with respect to the K\"ahler metric $(g_t, J_t)$, it follows
that $Dd f$ is $J_t$-invariant, and therefore $\int_M f Scal_{g_t} \omega^m$
is independent of $t$.
\end{proof}
\begin{rem}\label{rem20} One can extend Lemma~\ref{symplectic-futaki} for
any smooth family of (not necessarily integrable) $G$-invariant almost
complex structures $J_t$ compatible with $\omega$. Then, as shown in
\cite{lejmi}, the $L^2$-projection to $\mathfrak {g}_{\omega}$ of the {\it
hermitian scalar curvature} of the almost K\"ahler metric $(\omega, J_t)$
(see Appendix~\ref{app:almost-kahler} for a precise definition) is
independent of $t$. This gives rise to a {\it symplectic} Futaki invariant
associated to a compact subgroup $G$ of the group of hamiltonian
symplectomorphisms of $(M,\omega)$.
\end{rem}

Lemma~\ref{symplectic-futaki} will be used in conjunction with the
Narasimhan--Ramanan approximation theorem (see~\cite[Prop.~2.6]{NR} and
\cite[Prop.~4.1]{Ram}), which implies that any holomorphic vector bundle
$E$ over a compact curve $\Sigma$ of genus $\ge 2$ can be included in an
analytic family of vector bundles $E_t, \ t \in D_{\varepsilon}$ (where
$D_{\varepsilon} = \{t \in \C, \ |t| < \varepsilon \}$) over $\Sigma$, such
that $E_0:=E$ and $E_t$ is stable for $t\neq 0$. Such a family will be
referred to in the sequel as a {\it small stable deformation} of $E$.

\begin{lemma}\label{futaki-inv} Suppose that the vector bundle
$E= U \oplus V \to \Sigma$ splits as a direct sum of two subbundles, $U$ and
$V$. Consider the holomorphic $S^1$-action on $(M,J)= P(E)$, induced by
fibrewise scalar multiplication by $\exp{ (i \theta)}$ on $V$, and let $X \in
\mathfrak{h}(M,J)$ be the (real) holomorphic vector field generating this
action. Then the Futaki invariant of $X$ with respect to some (and therefore
any) K\"ahler class $\Omega$ on $(M,J)$ vanishes if and only if $U$ and $V$
have the same slope.
\end{lemma}
\begin{proof} We take some K\"ahler form $\omega$ on $(M,J)=P(E)$ and, by
averaging it over $S^1$, we assume that $\omega$ is $S^1$-invariant. As the
$S^1$-action has fixed points, the corresponding real vector field $X$ is
$J$-holomorphic and $\omega$-hamiltonian, i.e., $X= {\rm grad}_{\omega} f$ for
some smooth function $f$ with $\int_M f \omega^m =0$.

We now consider small stable deformations $U_t, V_t, \ t \in D_{\varepsilon}$
of $U$ and $V$, and put $E_t = U_t \oplus V_t$.  Considering the projective
bundle $P(E_t)$, we obtain a non-singular Kuranishi family $(M,J_t)$ with
$J_0=J$. By the Kodaira stability theorem (see e.g.~\cite{KM}) one can find a
smooth family of K\"ahler metrics $(\omega_t, J_t)$ with $\omega_0 =
\omega$. Using the vanishing of the Dolbeault groups $H^{2,0}(M,J_t)=
H^{0,2}(M,J_t)=0$, Hodge theory implies that by decreasing the initial
$\varepsilon $ if necessary, we can assume $[\omega_t] = [\omega]$ in
$H^2_{dR}(M)$. Note that any $J_t$ is $S^1$-invariant so, by averaging over
$S^1$, we can also assume that $\omega_t$ is $S^1$-invariant. Applying the
equivariant Moser lemma, one can find $S^1$-equivariant diffeomorphisms,
$\Phi_t$, such that $\Phi_t^* \omega_t = \omega$. Considering the pullback of
$J_t$ by $\Phi_t$, the upshot from this construction is that we have found a
smooth family of integrable complex structures $J_t$ such that: (1) each $J_t$
is compatible with the fixed symplectic form $\omega$ and is $S^1$-invariant;
(2) $J_0=J$; (3) for $t\neq 0$, the complex manifold $(M,J_t)$ is
equivariantly biholomorphic to $P(U_t \oplus V_t) \to \Sigma$ with $U_t$ and
$V_t$ stable (and therefore projectively-flat) hermitian bundles.

If $U$ and $V$ have equal slopes, then $E_t=U_t \oplus V_t$ becomes polystable
for $t \neq 0$, and $(M,J_t)$ has a CSC K\"ahler metric in each K\"ahler
class. It follows that the Futaki invariant of $X$ on $(M, J_t, \omega)$ is
zero for $t \neq 0$.

Conversely, if $U$ and $V$ have different slopes, it is shown in
\cite[Sect.~3.2]{ACGT} that the Futaki invariant of $X$ is different from zero
for any K\"ahler class on $(M,J_t), \ t \neq 0$.

We conclude using Lemma~\ref{symplectic-futaki}.
\end{proof}

This lemma shows that all the factors in the decomposition $E =
\bigoplus_{i=0}^{\ell} E_i$ must have equal slope, should a CSC K\"ahler
metric exists. As in the proof of Lemma~\ref{futaki-inv}, we consider small
stable deformations $E_i(t)$ of $E_i$ and our assumption for the slopes
insures that $E(t) = \bigoplus_{i=0}^{\ell} E_i(t)$ is polystable for $t
\neq 0$; furthermore, by acting with $\T$-equivariant diffeomorphisms, we
obtain a smooth family of $\T$-invariant complex structures $J_t$
compatible with $\omega$, such that for $t\neq 0$, the complex manifold
$(M, J_t)$ has a locally-symmetric CSC K\"ahler metric in each K\"ahler
class; by the uniqueness of the extremal K\"ahler metrics modulo
automorphisms~\cite{CT, mab-two}, {\it any} extremal K\"ahler metric on
$(M,J_t)$ is locally-symmetric when $t\neq 0$.  The third step in the proof
of Theorem~\ref{main} is then to show that the initial CSC K\"ahler metric
$(J_0,\omega)$ must be locally symmetric too. This follows from the next
technical result, generalizing arguments of \cite{Le-Sim1,Fuj-Sch}.

\begin{lemma}\label{stability} Let $J_t$ be a smooth family of integrable
almost-complex structures compatible with a symplectic form $\omega$ on a
compact manifold $M$, which are invariant under a torus $\T$ of hamiltonian
symplectomorphisms of $(M,\omega)$. Suppose, moreover, that $(J_0,\omega)$
define an extremal K\"ahler metric and that $\T$ is a maximal torus in the
reduced automorphism group of $(M,J_0)$. Then there exists a smooth family
of extremal K\"ahler metrics $(J_t,\omega_t, g_t)$, defined for
sufficiently small $t$, such that $\omega_0= \omega$ and $[\omega_t] =
[\omega]$ in $H^2_{dR}(M)$.
\end{lemma}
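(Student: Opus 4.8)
The plan is to realise the extremal metrics $(J_t,\omega_t)$ by an equivariant implicit function theorem, in the spirit of LeBrun--Simanca and Fujiki--Schumacher but carried out $\T$-equivariantly. Since each $J_t$ is compatible with the fixed form $\omega$, the pair $(J_t,\omega)$ is already a K\"ahler metric, and I look for the desired extremal form as $\omega_t=\omega+i\partial_t\bar\partial_t\phi_t$ with $\phi_t$ a $\T$-invariant real $J_t$-potential; this keeps $[\omega_t]=[\omega]$ in $H^2_{dR}(M)$ and, for $\phi_t$ small, keeps $\omega_t$ positive, hence K\"ahler. Because $\omega$ and every $\phi_t$ are $\T$-invariant, each metric $g_{t,\phi}$ is $\T$-invariant, so every $\xi\in\mathfrak t:=\mathrm{Lie}\,\T$ is Killing for it. Writing $f_\xi$ for the Killing potential of the extremal vector field $\xi$, a $\T$-invariant K\"ahler metric in the class $[\omega]$ is extremal exactly when its scalar curvature equals $f_\xi$, which leads me to study the zeros of
\[
F(t,\phi)=\Scal_{g_{t,\phi}}-f_\xi,
\]
a smooth map between $\T$-invariant H\"older spaces with $F(0,0)=0$. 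A preliminary point, which I would settle using the Futaki--Mabuchi invariance together with Lemma~\ref{symplectic-futaki}, is that the extremal field $\xi$ lies in $\mathfrak t$ and is independent of $t$ (as $[\omega_t]$ is fixed and the family is connected), so that $F$ is set up over fixed function spaces.

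The core of the argument is the partial linearisation $D_\phi F(0,0)$. Up to a constant its principal part is the Lichnerowicz operator $\cL_{g_0}=\mathcal D^\ast\mathcal D$ of the extremal metric $g_0=(J_0,\omega)$, a formally self-adjoint fourth-order elliptic operator, the $f_\xi$-term contributing only lower-order corrections that do not affect self-adjointness on the relevant subspace. I would recall that the real kernel of $\cL_{g_0}$ consists precisely of the Killing potentials of $g_0$. Restricting to $\T$-invariant functions cuts this kernel down: a Killing potential is $\T$-invariant iff the Killing field it generates commutes with $\T$, and since $\T$ is a maximal torus of the identity component $\Isom_0(M,g_0)$ of the (compact) isometry group --- which is where the maximal torus of the reduced automorphism group sits for an extremal metric --- its centraliser in the algebra of Killing fields is $\mathfrak t$ itself. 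Hence the $\T$-invariant kernel of $\cL_{g_0}$ is exactly the space $\mathfrak t_\omega$ of Killing potentials of elements of $\mathfrak t$, and by self-adjointness the image of $D_\phi F(0,0)$ on $\T$-invariant functions is the $L^2$-orthogonal complement $(\mathfrak t_\omega)^\perp$. Thus $D_\phi F(0,0)$ descends to an isomorphism from the $\T$-invariant potentials orthogonal to $\mathfrak t_\omega$ onto $(\mathfrak t_\omega)^\perp$.

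With this isomorphism, the implicit function theorem in the $\T$-invariant H\"older spaces produces, for $|t|$ small, a family $\phi_t$ with $\phi_0=0$ and $F(t,\phi_t)=0$, depending smoothly on $t$; elliptic regularity upgrades each $\phi_t$ to a smooth potential, and positivity of $\omega_t=\omega+i\partial_t\bar\partial_t\phi_t$ for small $t$ yields the required smooth family of extremal K\"ahler metrics $(J_t,\omega_t,g_t)$ with $\omega_0=\omega$ and $[\omega_t]=[\omega]$.

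I expect the main obstacle --- and the point where maximality of $\T$ is indispensable --- to be the kernel computation of the second paragraph. For $t\neq 0$ the manifold $(M,J_t)$ has a strictly smaller automorphism group, so the non-equivariant linearisation has a jumping cokernel (the holomorphy potentials present only at $t=0$) that genuinely obstructs the naive deformation; passing to $\T$-invariant functions removes exactly these non-$\T$-invariant obstructions and leaves only $\mathfrak t_\omega$, which is absorbed by the freedom in the extremal field $\xi\in\mathfrak t$. The remaining delicate bookkeeping --- that $f_\xi$ depends smoothly on $(t,\phi)$, that $F$ is a well-defined smooth map into a fixed target space, and that the moving orthogonal complements fit together so that the surjectivity onto $(\mathfrak t_\omega)^\perp$ is uniform near $t=0$ --- is routine given the self-adjointness of $\cL_{g_0}$ and the $t$-independence of $\xi$ furnished by Lemma~\ref{symplectic-futaki}.
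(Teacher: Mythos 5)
Your proposal is correct and follows essentially the same route as the paper: a $\T$-equivariant implicit function theorem whose key input is that, by maximality of $\T$ in the isometry group of the extremal metric $g_0$, the $\T$-invariant kernel of the Lichnerowicz operator reduces to $\mathfrak t_\omega$, so the linearisation is an isomorphism on the orthogonal complement. The only cosmetic difference is that the paper packages the equation via the double projection $(\Id-\Pi_\omega)\circ(\Id-\Pi_{\tilde\omega})(\Scal_{\tilde g})$ to land in a fixed target space rather than subtracting a moving potential $f_\xi$, which is exactly the "routine bookkeeping" you flag at the end.
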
 
\begin{proof} Recall that~\cite{gauduchon-book,Le-Sim} on any compact K\"ahler
manifold $(M,J)$, the {\it reduced} automorphism group, ${\widetilde {\rm
Aut}}_0(M,J)$, is the identity component of the kernel of the natural group
homomorphism from ${\rm Aut}_0(M,J)$ to the {\it Albanese torus} of $(M,J)$;
it is also the connected closed subgroup of ${\rm Aut}_0(M,J)$, whose Lie
algebra $\mathfrak{h}_0(M,J) \subset \mathfrak{h}(M,J)$ is the ideal of
holomorphic vector fields with zeros.

We denote by $\mathfrak{t}$ the Lie algebra of $\T$ and by $\mathfrak{h}$
(resp. $\mathfrak{h}_0$) the Lie algebra of the complex automorphism group
(resp. {reduced} automorphism group) of the central fibre $(M,J_0)$. As
$\T$ acts in a hamiltonian way, we have $\mathfrak{t} \subset
\mathfrak{h}_0$. By assumption, $\mathfrak{t}$ is a maximal abelian
subalgebra of $\mathfrak{i}_0(M,g_0)=\mathfrak{i}(M,g_0)
\cap\mathfrak{h}_0$, where $\mathfrak{i}(M,g_0)$ is the Lie algebra of
Killing vector fields of $(M, J_0, \omega, g_0)$.

As in the Lemma~\ref{symplectic-futaki} above, we let $\mathfrak{t}_{\omega}
\subset C^{\infty}(M)$ be the finite dimensional space of smooth functions
which are hamiltonians of elements of $\mathfrak{t}$. As the K\"ahler metric
$(J_0,\omega, g_0)$ is extremal (by assumption), its scalar curvature
$Scal_{g_0}$ is hamiltonian of a Killing vector field $X= {\rm grad}_{\omega}
(Scal_{g_0}) \in \mathfrak{i}_{0}(M,g_0)$. Clearly, such a vector field is
central, so $X \in \mathfrak{t}$ (by the maximality of $\mathfrak{t}$) and
therefore $Scal_{g_0} \in \mathfrak{t}_{\omega}$.

For any $\T$-invariant K\"ahler metric $({\tilde J}, {\tilde \omega}, {\tilde
g})$ on $M$, we denote by $\mathfrak{t}_{\tilde \omega}$ the corresponding
space of Killing potentials of elements of $\mathfrak{t}$ (noting that any $X
\in \mathfrak{t}$ has zeros, so that $\T$ belongs to the reduced automorphism
group of $(M, {\tilde J})$), and by $\Pi_{\tilde \omega}$ the $L^2$-orthogonal
projection of smooth function to $\mathfrak{t}_{\tilde \omega}$, with respect
to the volume form ${\tilde \omega}^m$. Obviously, if the scalar curvature
$Scal_{\tilde g}$ of ${\tilde g}$ belongs to $\mathfrak{t}_{\tilde \omega}$,
then $\tilde g$ is extremal.

Following \cite{Le-Sim1}, let ${C_{\perp}^{\infty}(M)}^{\T}$ denote the
Fr\'echet space of $\T$-invariant smooth functions on $M$, which are
$L^2$-orthogonal (with respect to the volume form $\omega^m$) to
$\mathfrak{t}_{\omega}$, and let ${\mathcal U}$ be an open set in $\R \times
{C_{\perp}^{\infty}(M)}^{\T}$ of elements $(t, f)$ such that $\omega +
dd^c_t f$ is K\"ahler with respect to $J_t$ (here $d^c_t$ denotes the
$d^c$-differential corresponding to $J_t$).  We then consider the map $\Psi :
{\mathcal U} \mapsto \R \times {C_{\perp}^{\infty}(M)}^{\T}, $ defined by
\[
\Psi(t, f) = \Big(t,({\rm Id} - \Pi_{\omega}) \circ ({\rm Id} - \Pi_{\tilde
\omega}) (Scal_{\tilde g})\Big),
\]
where $\tilde \omega := \omega + dd^c_t f$ and $Scal_{\tilde g}$ is the scalar
curvature of the K\"ahler metric $\tilde g$ defined by $(J_t, {\tilde
\omega})$. One can check that this map is $C^1$ and compute (as in
\cite{Le-Sim}, by also using \eqref{variation}) that its differential at
$(0,0) \in {\mathcal U}$ is
\[
(T_{(0,0)}\Psi) (t, f) = (t, t \delta \delta h -2 \delta \delta (D df)^-),
\]
where $D$ and $\delta$ are respectively the Levi--Civita connection and the
codifferential of $g_0$, $h= \big(\frac{d g_t}{dt}\big)_{t=0}$ and $(Ddf)^-$
denotes the $J_0$-anti-invariant part of $Ddf$. Note that $L(f) :=\delta
\delta ((D df)^-)$ is a $4$-th order (formally) self-adjoint $\T$-invariant
elliptic linear operator (known also as the {\it Lichnerowicz} operator, see
e.g.~\cite{gauduchon-book}). When acting on smooth functions, $L$ annihilates
$\mathfrak{t}_{\omega}$ (because any Killing potential $f$ satisfies $(D
df)^-=0$). It then follows that $L$ leaves ${C_{\perp}^{\infty}(M)}^{\T}$
invariant and, by standard elliptic theory, we obtain an $L^2$-orthogonal
splitting ${C_{\perp}^{\infty}(M)}^{\T} = {\rm ker}(L) \oplus {\rm
im}(L)$. However, any smooth $\T$-invariant function $f$ in ${\rm ker}(L)$
gives rise to a Killing field $X = {\rm grad}_{\omega} f$ in the centralizer
of $\mathfrak t \subset \mathfrak{i}_0(M,g_0)$. As $\mathfrak{t}$ is a maximal
abelian subalgebra of $\mathfrak{i}_0(M,g_0)$ we must have $X \in \mathfrak
t$, i.e. $f \in \mathfrak{t}_{\omega}$. It follows that the kernel of $L$
restricted to ${C_{\perp}^{\infty}(M)}^{\T}$ is trivial, and therefore $L$
is an isomorphism of the Fr\'echet space ${C_{\perp}^{\infty}(M)}^{\T}$.

This understood, we are in position to apply standard arguments, using the
implicit function theorem for the extension of $\Psi$ to the Sobolev
completion ${L_{\perp}^{2,k}(M)}^{\T}$ (with $ k\gg 1$) of
${C_{\perp}^{\infty}(M)}^{\T}$, together with the regularity result for
extremal K\"ahler metrics, precisely as in
\cite{Le-Sim,Le-Sim1,Fuj-Sch}. We thus obtain a family $(t, \omega_t)$ of
smooth, $\T$-invariant extremal K\"ahler metrics $(J_t, \omega_t)$ (defined
for $t$ in a small interval about $0$) which converge to the initial
extremal K\"ahler metric $(J_0, \omega)$ (in any Sobolev space $L^{2,k}(M),
\ k\gg 1$, and hence, by the Sobolev embedding, in $C^{\infty}(M)$).
\end{proof}

The uniqueness argument thus also applies at $t=0$, and the initial metric
is locally symmetric. We can now conclude the proof of Theorem~\ref{main}
by a standard argument using the de Rham decomposition theorem (see
\cite[Lemma 8]{fujiki} and \cite{LeB}). This realizes the fundamental group
of $\Sigma$ as a discrete subgroup group of isometries of the hermitian
symmetric space ${\C} P^{m-1}\times {\mathbb H}$ and thus defines a
projectively flat structure on $P(E)\to \Sigma$.

\section{Rigid toric bundles and the generalized Calabi construction} \label{calabi-type}

In this section, we recall the notion of a {\it semisimple} and {\it rigid}
isometric hamiltonian action of a torus on a compact K\"ahler manifold
$(M,g,J,\omega)$ (introduced in \cite{hfkg2}), as well as the construction
of {\it compatible} K\"ahler metrics (given by the {\it generalized Calabi
construction} of \cite{hfkg2}) on such manifolds. This provides a framework
for the search of extremal {\it compatible} metrics on rigid toric bundles
over a semisimple base, which parallels (and extends) the theory of
extremal toric metrics developed in \cite{Do2,Do3,Do5}. We then apply the
construction of this section to projective bundles of the form $P(E_0
\oplus \cdots \oplus E_{\ell}) \to S$, where $E_i$ is a projectively-flat
hermitian bundle over a K\"ahler manifold $(S, \omega_S)$. In all cases, we
prove the existence of compatible extremal K\"ahler metrics in ``small''
K\"ahler classes, cf. Theorems~\ref{th:small-classes} and
\ref{th:existence}.

\subsection{Rigid torus actions} \label{s:rigid}
Most of the material in this section is taken from \cite[Sect.~2]{hfkg2}
and we refer the Reader to this article for further details.

\begin{defn}\label{rigid} Let $(M,g,J,\omega)$ be a connected K\"ahler
$2m$-manifold with an effective isometric hamiltonian action of an
$\ell$-torus $\T$ with momentum map $z \colon M\to{\mathfrak t}^*$.  We say
the action is {\it rigid} if for all $x\in M$, $R_x^*g$ depends only on
$z(x)$, where $R_x\colon \T\to \T\cdot x\subset M$ is the orbit map.
\end{defn}

In other words, the action is rigid if, for any two generators
$X_\xi,X_\eta$ of the action --- $\xi,\eta\in\mathfrak t$ --- the smooth
function $g(X_\xi,X_\eta)$ is constant on the levels of the momentum map
$z$.

\smallskip
Henceforth, we suppose  that $M$ is compact. 

\smallskip

Obvious and well-known examples of rigid toric actions are provided by 
toric K\"ahler manifolds. 
A key feature of toric K\"ahler manifolds is actually  shared by 
rigid torus actions, namely the fact  that the image of $M$ by the momentum
map is a {\it Delzant} polytope $\Delta \subset {\mathfrak t}^*$ (see
\cite[Prop.~4]{hfkg2}) and that the regular values of $z$ are the points in
the interior $\Delta^0$. Thus, to any compact K\"ahler manifold endowed with a
rigid isometric hamiltonian action of an $\ell$-torus $\T$, one can associate
a smooth compact toric symplectic $2\ell$-manifold $(V, \omega_V, \T)$, via
the Delzant correspondence~\cite{delzant}. Note that the Delzant construction
also endows $V$ with the structure of a complex toric variety $(V, J_V,
\T^c)$.

Another {\it smooth} variety  is associated to a rigid torus action, namely the
complex --- or stable ---  quotient $\hat S$ of $(M,J)$ by the 
complexified action of
$\T^c$. For a general torus action, $\hat S$ is a $2(m-\ell)$-dimensional complex orbifold, but
when the torus action is rigid, 
it is shown in~\cite[Prop.~5]{hfkg2} that $\hat
S$ is smooth, and $M^0:=z^{-1}(\Delta^0)$ is then 
a principal $\T^c$ bundle over
$\hat S$.  Denote by  $\hat{M} :=  M^0 \times_{\T^c} V \to \hat S$
the associated fibre bundle in toric manifolds. Then, {\it either
  $(M,J)$ is 
$\T^c$-{\it equivariantly} biholomorphic to $\hat{M}$ or it is obtained 
by {\rm (}$\T^c$-equivariantly{\rm )} {\em blowing down} the
inverse image in $\hat{M}$ of 
 some codimension one faces of $\Delta$.}  Thus, $M$ and
$\hat M$ are (different in general) $\T^c$-equivariant compactifications of
the same principal $\T^c$-bundle $M^0$ over $\hat S$.  

In either case, by a convenient abuse of notation, we call $M$ or any
complex manifold $\T^c$-equivariantly biholomorphic to $M$, a {\it rigid
toric bundle}.  In the case when there is no blow-down, then $M = \hat M$
is a genuine fibre bundle over $\hat S$ with fibre the toric manifold $V$,
associated to a principal $\T$-bundle over $\hat S$, whereas, in the
general case, the K\"ahler metric $g$ on $M$ will be described, via its
pullback on $\hat{M}$, in terms of the toric bundle structure of
$\hat{M}$, thus allowing to introduce the notion of {\it compatible}
K\"ahler metrics on a general rigid toric bundle,
cf. \S\ref{s:generalised-calabi}.

We now specialize the above construction, in particular the blow-down
procedure, in the case when the (rigid) torus action is, in addition, {\it
semisimple}, according to the following general definition.

\begin{defn} An isometric hamiltonian torus action on K\"ahler manifold
$(M,g, J, \omega)$ is {\it semisimple} if for any regular value $z_0$ of the
momentum map, the derivative with respect to $z$ of the family $\omega_{\hat
S}(z)$ of K\"ahler forms on the complex (stable) quotient $\hat S$ of $(M,J)$
(induced by the symplectic quotient construction at $z$) is parallel and
diagonalizable with respect to $\omega_{\hat S}(z_0)$.\footnote{In general,
$\hat S$ is well-defined as a complex orbifold for $z$ in the connected
component $U_{z_0}$ of $z_0$ in the regular values.}\end{defn}

For a {\it semisimple} and {\it rigid} isometric hamiltonian torus action
the K\"ahler metrics $\omega_{\hat S}(z)$, parametrized by $z$ in $\Delta
^0$, on the stable quotient $\hat{S}$ are simultaneously diagonal and have
the same Levi--Civita connection. There then exists a K\"ahler metric
$(g_{\hat S}, \omega_{\hat S})$ on ${\hat S}$, such that the K\"ahler forms
$\omega_{\hat S}(z)$ are simultaneously diagonalizable with respect to
$g_{\hat S}$ and parallel with respect to the Levi--Civita connection of
$g_{\hat S}$, so that the universal cover of $(\hat S, \omega_{\hat S})$ is
a product $\prod _{j = 1} ^N (S _j, \omega _j)$ of K\"ahler manifolds
$(S_j, \omega_j)$ of dimensions $2d_j$, $j= 1, \ldots, N$, in such a way
that the restriction to $S_j$ of the pullback of $\omega_{\hat S}(z)$ is a
multiple of $\omega _j$ by an affine function of $z$. Moreover, to any face
of codimension one, $F _b$, of $\Delta$ involved in the blow-down process
corresponds a factor $(S _b, \omega _b)$ in the product $\prod _{j = 1} ^N
(S _j, \omega _j)$, which is isomorphic to the standard complex projective
space $(\mathbb{C} P ^{d _b}, \omega _b)$ of (positive) complex dimension
$d _b$ equipped with a Fubini-Study metric of holomorphic sectional
curvature equal to $2$ --- equivalently of scalar curvature equal to $2 d
_b (1 + d _b)$ --- so that $[\omega _b] = 2 \pi c _1 (\mathcal{O}
_{\mathbb{C} P ^{d _b}} (1))$.

Conversely, let $\hat S$ be a compact K\"ahler manifold, whose universal
cover is a K\"ahler product of the form $\prod _{j = 1} ^N (S _j, \omega
_j) = \prod _{a \in \mathcal{A}} (S _a, \omega _a) \times \prod _{b \in
\mathcal{B}} (\mathbb{C} P ^{d _b}, \omega _b)$, where each $\omega _b$ is
the K\"ahler form of a Fubini-Study metric of holomorphic sectional
curvature equal to $2$ ($\mathcal{A}$ or $\mathcal{B}$ may possibly be
empty). We moreover assume that $\pi _1 (\hat{S})$ acts diagonally by
K\"ahler isometries on the universal cover, so that $\hat{S}$ has the
structure of a fibre product of flat unitary $\mathbb{C} P ^{d
_b}$-bundles, $b \in \mathcal{B}$, over a compact K\"ahler manifold $S$,
covered by the product $\prod _{a \in \mathcal{A}} (S _a, \omega _a)$. Let
$\mathbb{T}$ a real (compact) torus of dimension $\ell$, of Lie algebra
$\mathfrak{t}$, $\Delta$ be a Delzant polytope in the dual space
$\mathfrak{t} ^*$, and $(V, J _V, \omega _V, \mathbb{T})$ a
$\mathbb{T}$-toric K\"ahler $2 \ell$-manifold, with momentum polytope
$\Delta$. Among the $n$ codimension one faces $F _i$, $i = 1, \ldots, n$,
of $\Delta$, with inward normals $u _i$ in $\mathfrak{t}$, we distinguish a
subset $\{F _b:b \in \mathcal{B}\}$ (possibly empty) with inward normals $u
_b$.  Let $\hat{P}$ be a principal $\mathbb{T}$-bundle over $\hat{S}$, such
that $- 2 \pi c _1 (\hat{P})$, as a $\mathfrak{t}$-valued $2$-form, is {\it
diagonalizable} with respect to the local product structure of $\hat{S}$,
i.e. is of the form $\sum _{j = 1} ^N [\omega _j] \otimes p _j = \sum _{a
\in \mathcal{A}} [\omega _a] \otimes p _a + \sum _{b \in \mathcal{B}}
[\omega _b] \otimes u _b$, where all $p _j$ are (constant) elements of
$\mathbb{T}$ and, we recall, $u _b$ denotes the inward normal of the
distinguished codimension one face of $\Delta$ associated to the factor
$(\mathbb{C} P ^{d _b}, \omega _b)$ in the universal cover of $\hat{S}$. We
denote by $\hat{M} = \hat{P} \times _{\mathbb{T}} V$ the associated toric
bundle over $\hat{S}$.

With these data in hand, the blow-down process relies on the general
{\it restricted toric quotient construction}, introduced in our previous
work \cite{hfkg2}, which, in the current situation, goes as follows.

Consider the product manifold $S_0 = \prod_{b \in \cB} \C P^{d_b}$ equipped
with a principal $\T$-bundle $P_0$ with $c_1(P_0)= \sum_{b \in \cB} c_1(\cO
_{\mathbb{C} P ^{d _b}} (-1))\otimes u_b$, and the corresponding bundle of
toric K\"ahler manifolds $\hat W = P_0\times_{\T} V$ over $S _0$, with
momentum map $z\colon \hat{W} \to \Delta \subset \mathfrak{t} ^*$.  Then the
restricted toric quotient construction associates to $\hat W$ a toric
manifold $(W, J_{W}, T)$, of the same dimension $2 (\ell + \sum _{b \in
\mathcal{B}} d _b)$ as $\hat{W}$, obtained from $\hat W$ by collapsing $z
^{-1} (F _b)$, $b \in \mathcal{B}$. Recall that, whereas $V$ is obtained,
via the Delzant construction, as a symplectic reduction of $\mathbb{C} ^n$
by the $(n - \ell)$-dimensional torus $G$, kernel of the map $(a _1,
\ldots, a _n) \mod{\mathbb{Z} ^n} \mapsto \sum _{i = 1} ^n a _i u _i
\mod{\Lambda}$ from $\mathbb{T} ^n = \mathbb{R} ^n/\mathbb{Z} ^n$ onto
$\mathbb{T}$, $W$ is similarly obtained as a symplectic reduction of
$\oplus _{b \in \mathbb{B}} \mathbb{C} ^{d _b + 1} \oplus \mathbb{C} ^{n -
|\mathcal{B}|}$ by $G \subset \mathbb{T} ^n$, via the natural diagonal
action of $\mathbb{T} ^n$ on $\oplus _{b \in \mathbb{B}} \mathbb{C} ^{d _b
+ 1} \oplus \mathbb{C} ^{n - |\mathcal{B}|}$ (where $|\mathcal{B}|$ is the
cardinality of $\mathcal{B}$); in this picture, the $(\ell + \sum _{b \in
\mathcal{B}} d _b)$-dimensional torus $T$ acting on $W$ is identified with
the quotient $\mathbb{T} ^{n + \sum _{b \in \mathcal{B}} d _b}/G$, whereas
the {\it restricted subtorus} $\mathbb{T}$ is identified with the subtorus
$\mathbb{T} ^n/G$ of $T$, cf.~\cite[Sect.~1.6]{hfkg2} for details, in
particular for the identification of $W$ with a blow-down of
$\hat{W}$.\footnote{A simple illustration of this construction is $W=\C
P^2$ seen as a fibrewise $S^1$-equivariant blow-down of the first
Hirzebruch surface $\hat W=P(\cO \oplus \cO(-1)) \to \C P^1$.} We denote by
$b\colon \hat W \to W$ the $(\mathbb{T}, T)$-equivariant blow-down map of $\hat
W$ onto $W$, along the inclusion $\T \subset T$. We then have the following
definition.
\begin{defn}\label{blow-down} A blow-down of
$\hat M = \hat{P}\times_{ \T} V\to \hat S \to S$ is a (locally trivial)
fibre bundle $(M, J)$ over $S$, with fibre the K\"ahler manifold $(W,
J_{W})$, endowed with a global fibrewise (restricted) holomorphic action of
the $\ell$-dimensional torus $\T$, and a $\T$-equivariant holomorphic map
$\hat M \to M$, equal to the blow-down map $b$ on the corresponding
K\"ahler fibres over $S$. We summarize this definition in a
$\T$-equivariant commutative diagram
\begin{diagram}[size=1.5em]
\smash{\hat M}= \hat P \times_{\T} V &\rTo& \smash{\hat S}\\
\dTo & & \dTo\\
M & \rTo & S,
\end{diagram}
and refer to the manifold $(M,J, \T)$ as a {\it rigid toric bundle over a
semisimple base}.
\end{defn}

Using this construction, the blow-down was introduced in \cite{hfkg2} under
the simplifying assumption that the local product structure of $\hat S$
consists of {\it global} factors for $b \in \cB$ (i.e. $\hat S \to S$ is a trivial fibre bundle). In particular, the blow-down
was expressed in \cite[Sect.~2.5]{hfkg2} in terms of the universal covers of
$M$ and $\hat M$. In fact, in this case there exists a
diagonalizable principal $\T$-bundle $P$ over $S$ with first Chern class $2\pi
c_1(P)= \sum_{a \in \cA} [\omega_a]\otimes p_a$ and we can identify $\hat M =  \hat P \times_{\T} V \cong P \times_{\T} {\hat W}$. Then,  $M:= P \times_{\T} W$ clearly satisfies the definition~\ref{blow-down} above. 

\smallskip

We now illustrate the blow-down construction in the case of projective bundles.

\subsection{Projective bundles as rigid toric bundles}
\label{s:projective bundles}

In this paragraph, we specialize the previous discussion 
to the case when the Delzant 
polytope $\Delta$ is a simplex
in $\mathfrak t^*\cong\R^{\ell}$,  with codimension one faces $F_0,\ldots
F_\ell$; the associated complex toric variety $V$ is then the complex
projective space $V\cong (\C P^{\ell}, \T^c)$ and $\smash{\hat M}$ is 
then $\T^c$-equivariantly
biholomorphic to a $\C P^{\ell}$-bundle over a K\"ahler manifold $\hat
S$ of the type discussed in \S\ref{s:rigid}; 
since $\smash{\hat M}$ comes from a {\it principal} $\T^c$-bundle,
$\smash{\hat M}$ is actually of the form 
$P(\cL_0\oplus \cdots \oplus \cL_\ell)\to \hat S,$ where
$\cL_i$ are hermitian holomorphic line bundles (the $\T^c$ action is
then induced by 
scalar multiplication on $\cL_i$).

According to the discussion in \S\ref{s:rigid},  a blow-down
process on $\hat{M}$ is encoded
by the realization of $\hat S$ as a fibre product of {\it flat projective
unitary} $\C P^{d_{b}}$-bundles over a K\"ahler manifold $S$. 
We here only consider flat projective bundles of the form $P
(E)$, where $E$ is a rank $r+1$ projectively-flat
hermitian vector bundle over  $S$ (in general the obstruction to the existence of $E$ is given by a torsion
element of $H^2(S, \cO^*)$; in particular,
such an $E$ always exists if $S=\Sigma$ is a Riemann surface).
We then have $\hat S = P(\vE_0)\times_S \cdots\times_S P(\vE_\ell)\to
S$, where each $\vE_i\to S$ is a projectively-flat hermitian bundle of rank
$d_i+1$, and we assume that 
 $c_1(E_i)/(d_i+1) - c_1(E_0)/(d_0+1)$ pulls back to $\sum_{a
\in \cA} p_{ia}[\omega_a]$ on the covering space $\prod_{a\in \cA}(S_a,
\omega_a)$.  

In this case, we have that $\smash{\hat M}=P\bigl(\cO(-1)_{E_0}\oplus \cdots
\oplus\cO(-1)_{E_\ell}\bigr)\to \hat S$, where $\cO(-1)_{E_i}$ is the
(fibrewise) tautological line bundle over $P(\vE_i) \to S$ --- trivial over the
other factors of $\hat S$ ---  whereas $M = P\bigl(E _0 \oplus \cdots
\oplus E _{\ell}\bigr) \to S$, the blow-down process being, over each
point of $S$, the standard blow-down process from $P \bigl(\oplus _{j = 0}
^{\ell} \mathcal{O} (-1) _{V _j}\bigr) \to \prod _{j = 0} ^{\ell} (V
_j)$ to $P (V)$, for any splitting $V = \oplus _{j = 0} ^{\ell} V _j$
of a complex vector space $V$ into a direct sum of $\ell + 1$ $(d _j +
1)$-dimensional vector subspaces, $d _j > 0$, $\ell > 0$,
cf. \cite{hfkg2}

To go further into the geometry of the situation, we next fix a hermitian
metric on $E_i$ whose Chern connection has curvature $\Omega_i \otimes {\rm
Id}_{E_i}$ with
\begin{equation*}
\Omega_i - \Omega_0 = \sum_{a \in \cA} p_{ia} \omega_a, \ \ i\ge 1,
\end{equation*}
where $p_a=(p_{1a}, \ldots , p_{\ell a}) \in \R^{\ell} \cong \mathfrak t$
will be the constants of our construction.  Let ${\tilde \theta}_{i}$ be a
connection $1$-form for the principal $U(1)$-bundle over $\hat S$,
associated to the line bundle $\cO(-1)_{E_i}$, with curvature $d\tilde
\theta_i = -\omega_i + \Omega_i$, where $\omega_i$ pulls back to the
Fubini--Study metric of scalar curvature $2d_i(d_i+1)$ on the universal
cover of $P(E_i)$ when $d_i \ge 1$, and is zero when $d_i=0$. We then put
${\hat \theta}_j = {\tilde \theta}_j - {\tilde \theta_0}$ to define a
principal $\T$-connection ${\hat \theta} = ({\hat \theta}_1, \ldots, {\hat
\theta}_{\ell})$ associated with the principal $\T^c$-bundle $M^0$ over
$\hat S$.

\subsection{The generalized Calabi construction on rigid toric bundles over a
semi\-simple base}\label{s:generalised-calabi}

As recalled in \S\ref{s:rigid}, any compact K\"ahler manifold
$(M,J,\omega,g)$ endowed with a rigid and semisimple isometric hamiltonian
action of an $\ell$-torus $\T$, is equivariantly biholomorphic to a {\it
rigid toric bundle} over a semisimple base, obtained by a blow-down process
from an associated bundle in $\T$-toric manifolds $\hat{M}$.  It still
remains to describe K\"ahler structure $(g, \omega)$ on $M$: according to
\cite[Thm.~2]{hfkg2}, this is done by using the {\it generalized Calabi
construction} which we now recall, following \cite{hfkg2}, with slightly
different notation. We freely use the notation of \S\ref{s:rigid}.

The generalized Calabi construction is made of three main building blocks
--- only two if there is no blow-down --- and produces a family of (smooth)
singular K\"ahler structures on $\hat{M}$, which descend to genuine
K\"ahler metrics on $M$, called {\it compatible}: for any K\"ahler manifold
$(M,J,\omega,g)$ endowed with a rigid and semisimple isometric hamiltonian
action of an $\ell$-torus $\T$, the K\"ahler structures $(g,\omega)$ is
compatible.

\smallskip
The first building block of the construction is the choice of a compatible
$\T$-invariant K\"ahler metric $g_V$ on the symplectic toric manifold $(V,
\omega_V, \T)$. This part is well-known (see
e.g.~\cite{Abreu0,Abreu1,Do3,Guillemin}): let $z\in C^\infty(V,\mathfrak
t^*)$ be the momentum map of the $\T$ action with image $\Delta$ and let
$V^0 = z^{-1}(\Delta^0)$ be the union of the generic $\T$ orbits. On $V^0$,
orthogonal to the $\T$ orbits is a rank $\ell$ distribution spanned by
commuting holomorphic vector fields $JX_\xi$ for $\xi\in\mathfrak t$. Hence
there is a function $\ang\colon V^0\to \mathfrak t/2\pi \Lambda$, defined
up to an additive constant, such that $d\ang(JX_\xi)=0$ and
$d\ang(X_\xi)=\xi$ for $\xi\in\mathfrak t$. The components of $\ang$ are
`angular variables', complementary to the components of the momentum map
$z\colon V^0\to \mathfrak t^*$, and the symplectic form in these
coordinates is simply
\begin{equation*}\label{toricomega}
\omega_V=\langle dz \wedge d\ang\rangle,
\end{equation*}
where the angle brackets denote contraction of $\mathfrak t$ and $\mathfrak
t^*$.
These coordinates identify each tangent space with $\mathfrak t\oplus
\mathfrak t^*$, so any $\T$-invariant $\omega_V$-compatible K\"ahler metric
must be of the form
\begin{equation}\label{toricmetric}
g_V=\langle dz, {\bf G} , dz \rangle+ \langle d\ang,
{\bf H}, d\ang\rangle,
\end{equation}
where ${\bf G}$ is a positive definite $S^2\mathfrak t$-valued function on
$\Delta^0$, ${\bf H}$ is its inverse in $S^2\mathfrak t^*$---observe that
${\bf G}$ and ${\bf H}$ define mutually inverse linear maps $\mathfrak t^*
\to\mathfrak t$ and $\mathfrak t\to\mathfrak t^*$ at each point---and
$\langle\cdot,\cdot,\cdot\rangle$ denotes the pointwise contraction
${\mathfrak t}^* \times S^2\mathfrak t \times {\mathfrak t}^* \to \R$ or the
dual contraction.  The corresponding almost complex structure is defined by
\begin{equation}\label{toricJ}
J d\ang = -\langle {\bf G}, dz \rangle
\end{equation}
from which it follows that $J$ is integrable if and only if ${\bf G}$ is the
hessian of a function $U$ (called symplectic potential) on
$\Delta^0$~\cite{Guillemin}.

Necessary and sufficient conditions for $U$ to come from a globally defined
$\T$-invariant $\omega_V$-compatible K\"ahler metric on $V$ were  obtained in
\cite{Abreu1,hfkg2,Do3}. We state here the first-order boundary conditions
obtained in \cite[Prop.~1]{hfkg2}: for any face $F \subset \Delta$, denote by
${\mathfrak t}_{F} \subset {\mathfrak t}$ the vector subspace spanned by the
inward normals $u_i \in {\mathfrak t}$ to all codimension one faces of
$\Delta$, containing $F$; as $\Delta$ is Delzant, the codimension of
${\mathfrak t}_{F}$ equals the dimension of $F$. Furthermore, the annihilator
${\mathfrak t}^0_{F}$ of ${\mathfrak t}_{F}$ in ${\mathfrak t}^*$ is naturally
identified with $({\mathfrak t}/{\mathfrak t}_F)^*$. Then {\it a smooth
strictly convex function $U$ on $\Delta^0$ corresponds to a $\T$-invariant,
$\omega_V$-compatible K\"ahler metric $g_V$ via \eqref{toricmetric} if and
only if the $S^2{\mathfrak t}^*$-valued function ${\bf H}= {\rm Hess}
(U)^{-1}$ on $\Delta^0$ verifies the following boundary conditions}:
\begin{bulletlist}
\item \textup{[smoothness]} ${\bf H}$ is the restriction to $\Delta^0$ of a
smooth $S^2{\mathfrak t}^*$-valued function on $\Delta$\textup;
\item \textup{[boundary values]} for any point $z$ on the codimension one face
$F_i \subset \Delta$ with inward normal $u_i$, we have
\begin{equation}\label{eq:toricboundary}
{\bf H}_{z}(u_i, \cdot) =0\qquad {\rm and}\qquad (d{\bf H})_{z}(u_i,u_i) = 2
u_i,
\end{equation}
where the differential $d{\bf H}$ is viewed as a smooth $S^2{\mathfrak
t}^*\otimes {\mathfrak t}$-valued function on $\Delta$\textup;
\item \textup{[positivity]} for any point $z$ in the interior of a face
$F \subseteq \Delta$, ${\bf H}_{z}(\cdot, \cdot)$ is positive definite
when viewed as a smooth function with values in $S^2({\mathfrak
t}/{\mathfrak t}_F)^*$.
\end{bulletlist}
These conditions can be formulated in the following alternative way,
cf.~\cite{Abreu1,Do3}: (i) $U$ is smooth and strictly
convex\footnote{In~\cite{Abreu1} the strict convexity condition on the
interior of the proper faces is realized equivalently as a condition on the
determinant of $\mathrm{Hess}\, U$.} on the interior, $F ^0$, of each face
$F$ of $\Delta$; (ii) if $F = \bigcap _{i} F _i$, where $F _i$ is a
codimension one face of $\Delta$ on which $\ip{u_i,z}+c_i=0$, then, in some
neighbourhood of $F ^0$ in $\Delta$, $U$ is equal to $\frac12\sum_i
(\ip{u_i,z}+c_i)\log(\ip{u_i,z}+c_i)$ up to a smooth function.
\smallskip

We denote by $\cS(\Delta)$ the space of all symplectic potentials on
$\Delta$ defined either way.  For any $U$ in $\cS(\Delta)$, we thus 
get a $\T$-invariant,
$\omega _V$ compatible, K\"ahler metric $g _V$ on $V$.

\smallskip

The second building block of the generalized Calabi construction 
consists in using $g _V$ to
construct a  K\"ahler metric $g_W$ on the variety $W$, 
with respect to which the
restricted $\T$-action is rigid and semisimple. This part of the
construction only appears in the situation ``with blow-down'' and
relies in a crucial way on \cite[Prop.~2]{hfkg2}. Recall that $W$ was 
obtained by a {\it restricted symplectic quotient process}, which
ultimately amounts to  a blow-down of $\hat{W} = P _0 \times _{\T} V$,
where $P _0$ is a $\T$-principal bundle over $\prod _{b \in
  \mathcal{B}}  \mathbb{C} P ^{d _b}$, cf. \S\ref{s:rigid}. The
  construction of $g _W$ then requires the choice of a
connection $1$-form $\theta_0$ on $P_0$, with curvature 
$d\theta_0= \sum_{b \in \cB}
\omega_b \otimes u_b$ where, we recall, $\omega_b$ is the (normalized)
Fubini--Study metric on $\C P^{d_b}$ of scalar curvature $2d_b(d_b+1)$, and
$u_b \in {\mathfrak t}$ is the inward normal to the codimension one face $F_b
\subset \Delta$ (satisfying $\langle u_b, z \rangle + c_b=0$). We
still denote 
by $\theta_0\in\Omega^1({W}^0,\mathfrak t)$ the induced $1$-form on
the open dense subset ${W}^0 :=
P_0 \times_{\T} V^0$ of $\hat{W}$ and we consider the K\"ahler
structure on ${W}^0$ defined by:
\begin{equation}\label{W}
\begin{aligned}
g_W&= \sum_{b \in \cB}(\ip{u_b,z}+c_b)g_b+\ip{dz,{\bf G},dz}+\ip{\theta_0, {\bf H},\theta_0},\\
\omega_{W} &= \sum_{b \in \cB} (\ip{u_b,z}+c_b)\omega_b+\ip{dz\wedge\theta_0},  \qquad\qquad d\theta_0= \sum_{b \in \cB} \omega_b \otimes u_b,
\end{aligned}
\end{equation}
with ${\bf G} = {\rm Hess}(U) = {\bf H}^{-1}$. 
Clearly, the K\"ahler structure $(g_W, \omega_{W})$ is well-defined 
on $W^0 = P_0\times_{\T}
V^0$. As shown in \cite{hfkg2}, the pair $(g_W, \omega_{W})$ smoothly
extends to $\hat{W}$ --- not as a K\"ahler structure however --- and
descends to a smooth, $\T$-invariant,  K\"ahler
structure on $W$.

\smallskip

The third and last building block of the generalized Calabi construction similarly consists in constructing a suitable 
K\"ahler structure on $M ^0 = \hat{P} \times _{\T} V ^0$, via the
choice of a connection $1$-form $\hat \theta$ on $\hat P$, 
with curvature (covered by) $\sum_{a \in \mathcal{A}} \omega _a
\otimes p _a + \sum _{b \in \mathcal{B}} \omega _b \otimes u _b$. Then
the restriction of $(\hat{P}, \hat{\theta})$ to each fibre of $\hat{S}
\to S$ is isomorphic to $(P _0, \theta _0)$ over $\prod _{b \in
  \mathcal{B}}  \mathbb{C} P ^{d _b}$. 
Still denoting by ${\hat
\theta} \in\Omega^1({M}^0,\mathfrak t)$ the induced
$1$-form on $M^0= \hat P \times_{\T} V^0$, we consider  the K\"ahler
structure $(g,\omega)$ on $M^0$ defined by:
\begin{equation}\label{M}
\begin{aligned}
g&= \sum_{j=1}^N (\ip{p_j,z}+c_j)g_j+\ip{dz,{\bf G},dz}+\ip{{\hat \theta}, {\bf H},{\hat \theta}},\\
\omega&= \sum_{j=1}^N (\ip{p_j,z}+c_j)\omega_j+\ip{dz\wedge{\hat \theta}},\qquad\qquad
d{\hat \theta}=\sum_{j=1}^N \omega_j\otimes p_j,
\end{aligned}
\end{equation}
where:
\begin{bulletlist}
\item ${\bf G} = {\rm Hess} (U)= {\bf H}^{-1}$, where $U$ is the
  symplectic potential of the chosen toric K\"ahler structure $g _V$
  on $V$; 
\item for each $b \in \cB$,  $p_b = u_b$ and the real number $c_b$ is such that $\ip{p_b,z}+c_b=0$ on the codimension one face $F_b$;
\item for each $a \in \cA$, $\ip{p_a,z}+c_a$ is
positive on $\Delta$.
\end{bulletlist}
Clearly, \eqref{M} defines a smooth tensor on $\hat M$ and it is shown in
\cite[Thm.~2]{hfkg2} that it is the pullback of a smooth metric on the
blow-down $M$. Indeed, this is obvious in the case when the fibre bundle
$\hat S \to S$ is trivial (for example taking $\hat M$ be simply connected,
as in \cite{hfkg2}). Then, there exists a principal $\T$-bundle over $S$
with connection form $\theta$ and curvature $d\theta = \sum_{a \in \cA}
\omega_a \otimes p_a$ and the restriction of $\hat \theta$ to $S_0=
\prod_{b\in \cB} \C P^{d_b}$ gives rise to a principal $\T$-bundle $P_0$
over $S^0$ with connection $1$-form $\theta_0$ and curvature $d\theta_0 =
\sum_{b \in \cB} \omega_b \otimes u_b $. Thus, $M \cong P \times_{\T} W$,
$M^0 \cong P \times_{\T} W^0$ where $W^0 = P_0 \times_{\T} V^0$.  It
follows that the metric \eqref{M} restricts on each $W^0$ fibre to the
metric $(g_{W}, \omega_{W})$ defined by \eqref{W}; as $(g_{W}, \omega_{W})$
compactifies smoothly on $W$, and $\ip{p_a,z}+c_a$ are strictly positive on
$M$, \eqref{M} defines a K\"ahler structure on $M$. To handle the general
case, one can consider the universal covers of $\hat M$ and $M$ and use the
previous argument, noting that the smooth extension of the metric is a
local property; a direct argument in the case of the projective bundles
described in Sect.~\ref{s:projective bundles} can be given along the lines
of \cite[\S~1.3]{ACGT}. This completes the {\it generalized Calabi
construction} according to \cite{hfkg2}.

\smallskip

Assuming that the metrics $(g_j,\omega_j)$, the
connection $1$-form $\hat \theta$, the polytope $\Delta$
and the constants $(p_j,c_j)$ are all fixed, \eqref{M} defines a family of
K\"ahler metrics parametrized by symplectic potentials $U \in \cS(\Delta)$
(or, equivalently, by toric K\"ahler metrics on $(V,\omega_V,\T)$). We note
that for this family, the symplectic $2$-form $\omega$ remains unchanged, so
we obtain a family of $\T$-invariant $\omega$-compatible K\"ahler metrics
corresponding to {\it different} complex structures. However, any two such
complex structures are biholomorphic, under a $\T$-equivariant diffeomorphism
in the identity component: this is well-known in the case of a symplectic
toric manifolds (i.e., on $(V,\omega_V, \T)$)~see~\cite{Abreu1,Do2}, and the
same argument holds (fibrewise) on $W$ and $M$, see
\cite[\S~1.4]{ACGT}. 
The pullbacks of the
symplectic form $\omega$ under such diffeomorphisms introduce a K\"ahler class
$\Omega$ on a fixed complex manifold $(M,J)$ (we can take $J$ to be the
complex structure on $M$ introduced in Definition~\ref{blow-down}: it
corresponds to the {\it standard} symplectic potential $U_0$,
see~\cite{Abreu1,Guillemin}).  
\begin{defn} K\"ahler structures $(g,\omega)$ arising from the
  generalized Calabi construction on a rigid toric bundle $M$,  depending on the choice of a symplectic
  potential $U$ on the corresponding Delzant polytope
$\cS(\Delta)$, whose explicit expression is given by
  \eqref{M} on $M ^0$, 
are called {\it compatible}. The corresponding K\"ahler classes are
accordingly called {\it compatible K\"ahler classes}.
\end{defn}

We shall further assume that the metrics $(g_j,\omega_j)$ are fixed and have
constant scalar curvature $Scal_j$ (with $Scal_b = 2d_b(d_b+1)$ for $b \in
\cB$),\footnote{Presumably, the K\"ahler metrics $(g_j,\omega_j)$ must be CSC
in order to obtain an extremal K\"ahler metric $(g,\omega)$ as above. We do
not prove this here, but this fact has been established for $\ell=1$ in
\cite[Prop.~14]{hfkg-survey}.} and that $\Delta$ and $p_j$ are fixed. Recall
that for $b\in \cB$, the constants $c_b$ are also fixed by requiring 
$\langle u_b, z \rangle + c_b=0$ on the codimension one face $F_b \subset
\Delta$. The real constants $c_a, a \in \cA$ can vary (on a given manifold
$(M,J)$) and they parametrize the compatible K\"ahler classes.

\subsection{The isometry Lie algebra}\label{isometry}

For a compact K\"ahler manifold $(M,g)$, we denote by $\mathfrak{i}_0(M,g)$
the Lie algebra of all Killing vector fields with zeros; 
this is equivalently the Lie algebra of all hamiltonian Killing vector fields.

The following result has been established in the case $\ell=1$ in
\cite[Prop.~3]{ACGT} and its proof generalizes to the general case. For the
convenience of the Reader, we reproduce the argument from \cite{ACGT}.
\begin{lemma}\label{isometry algebra}
Let $(g,\omega)$ be a compatible K\"ahler metric on $M$, where the stable
complex quotient $\hat S$ is equipped with the local product K\"ahler
metric $(g_{\hat S},\omega_{\hat S})$ covered by
$\prod_{j=1}^N(S_j,\omega_j)$.  Denote by ${\hat p}\colon M^0 \to \hat S$
the principal $\T^c$-fibre structure of the regular part $M^0$ of $\T$
action on $M$. Let $\mathfrak z(\T,g)$ be the centralizer in
$\mathfrak{i}_0(M,g)$ of the $\ell$-torus $\T$.

Then the vector space $\mathfrak z(\T,g)$ is the direct sum of a lift of
$\mathfrak{i}_0(\hat S,g_{\hat S})$ and the Lie algebra $\mathfrak{t}
\subset \mathfrak{i}_0(M,g)$ of $\T$ in such a way that the natural
homomorphism ${\hat p}_*\colon \mathfrak z(\T,g)\to \mathfrak{i}_0(S, g_S)$ is
a surjection.
\end{lemma}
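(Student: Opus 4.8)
The plan is to follow the $\ell=1$ argument of \cite[Prop.~3]{ACGT}, carried out with $\mathfrak t$-valued rather than scalar quantities. I work throughout on the regular part $M^0=\hat P\times_{\T}V^0$; since $V^0\cong \T\times\Delta^0$, we may identify $M^0\cong\hat P\times\Delta^0$, and I use the $g$-orthogonal splitting $TM^0=\mathcal H\oplus\mathcal Z\oplus\mathcal V$ read off from \eqref{M}, where $\mathcal H$ is the horizontal lift of $T\hat S$ via $\hat\theta$, $\mathcal Z$ is spanned by the momentum directions $\partial_{z_i}$, and $\mathcal V$ is spanned by the generators $X_\xi$, $\xi\in\mathfrak t$. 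First I would observe that $\mathfrak z(\T,g)$ is exactly the space of $\T$-invariant hamiltonian Killing fields (since commuting with the connected group $\T$ is the same as being $\T$-invariant), and that any such $X$ preserves the momentum map. Indeed, for $\xi\in\mathfrak t$ with Killing potential $z_\xi$ one has $\mathcal L_X(\iota_{X_\xi}\omega)=\iota_{[X,X_\xi]}\omega+\iota_{X_\xi}\mathcal L_X\omega=0$, so $X(z_\xi)$ is constant; as $X$ has a zero, this constant is $0$, whence $dz(X)=0$. Thus $X$ has no $\mathcal Z$-component, and, being $\T$-invariant, it descends to $\hat S\times\Delta^0$ and may be written $X=\tilde X+X_\phi$ with $\tilde X\in\mathcal H$ covering a (a priori $z$-dependent) field $\bar X$ on $\hat S$ and $\phi\colon\hat S\times\Delta^0\to\mathfrak t$.

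The core of the proof is then to read off the structure of $X$ from the block components of $\mathcal L_Xg=0$ relative to \eqref{M}. The mixed $\mathcal Z$--$\mathcal V$ component, using $[X_\phi,\partial_{z_i}]=-X_{\partial_{z_i}\phi}$ together with the invertibility of ${\bf H}$, forces $\partial_{z}\phi=0$, so $\phi$ is pulled back from $\hat S$. The $\mathcal H$--$\mathcal H$ component, in which the base metric enters as $\sum_j(\ip{p_j,z}+c_j)\,g_j$ with coefficients varying affinely and independently in $z$, forces $\bar X$ to be $z$-independent and Killing for each $g_j$ separately, hence $\bar X\in\mathfrak i_0(\hat S,g_{\hat S})$ (its zeros being the images of those of $X$). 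Finally the $\mathcal H$--$\mathcal V$ component, in which the curvature $d\hat\theta=\sum_j\omega_j\otimes p_j$ couples the two distributions, identifies $\phi$ with the $\hat\theta$-momentum of $\bar X$ up to an additive constant $\xi\in\mathfrak t$; the remaining $\mathcal V$--$\mathcal V$ and $\mathcal Z$--$\mathcal Z$ components are automatic since $[X,X_\xi]=0$ and $X(z)=0$. This exhibits $X$ as a canonical Killing lift of $\bar X$ (the one with $\xi=0$) plus a generator $X_\xi$ of $\T$.

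Running this computation backwards produces, for each $\bar X\in\mathfrak i_0(\hat S,g_{\hat S})$ and each $\xi\in\mathfrak t$, a genuine element of $\mathfrak z(\T,g)$, which yields the asserted splitting $\mathfrak z(\T,g)=\bigl(\text{lift of }\mathfrak i_0(\hat S,g_{\hat S})\bigr)\oplus\mathfrak t$; the sum is direct because $\mathfrak t$ is precisely the kernel of $X\mapsto\bar X$. Composing $X\mapsto\bar X$ with the bundle projection $\hat S\to S$ gives the homomorphism $\hat p_*\colon\mathfrak z(\T,g)\to\mathfrak i_0(S,g_S)$; its surjectivity follows from the product structure of $\hat S$: on the universal cover $\prod_{a\in\cA}S_a\times\prod_{b\in\cB}\C P^{d_b}$ any $\bar Y\in\mathfrak i_0(S,g_S)$ extends by zero in the $\C P^{d_b}$-directions, and this extension commutes with the diagonal $\pi_1(\hat S)$-action and so descends to a Killing field of $g_{\hat S}$ lying over $\bar Y$. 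The main obstacle is the middle paragraph: the explicit block computation of $\mathcal L_Xg=0$ on $M^0$, and especially disentangling the fibre part $X_\phi$ from the horizontal part $\tilde X$ via the curvature term of \eqref{M}. For $\ell=1$ this is carried out in \cite[Prop.~3]{ACGT}; the only new feature for $\ell>1$ is the bookkeeping of $\mathfrak t$-valued momenta and the inversion of the $S^2\mathfrak t^*$-valued matrix ${\bf H}$, which is routine once the scalar case is in hand.
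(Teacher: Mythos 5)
Your strategy---extracting the structure of $X\in\mathfrak z(\T,g)$ from the block decomposition of $\mathcal L_Xg=0$ relative to \eqref{M}---is viable but genuinely different from the paper's route. The paper argues complex-analytically: an element of $\mathfrak z(\T,g)$ is a real holomorphic vector field commuting with $\T$, hence $\T^c$-invariant, hence projectable under $\hat p$ to a holomorphic vector field on $\hat S$; this produces the ($z$-independent) base field at once, with no Killing-equation bookkeeping. In the other direction the lift is written down explicitly as $X_H+\sum_j f_j\ip{p_j,K}$, where the $f_j$ are hamiltonians of the projections of the base field onto the factors $\mathcal H_j$, and one checks it is holomorphic and hamiltonian with potential $\sum_j(\ip{p_j,z}+c_j)f_j$. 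Your Riemannian computation avoids any appeal to holomorphicity (which is attractive, e.g.\ in the almost K\"ahler setting of the appendix), at the price of more block analysis.

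Two steps of that analysis need repair. First, the $\mathcal H$--$\mathcal H$ component does \emph{not} force $\bar X$ to be $z$-independent: since $X(z)=0$, it only yields $\sum_j(\ip{p_j,z}+c_j)\,\mathcal L_{\bar X(\cdot,z)}g_j=0$, hence $\mathcal L_{\bar X(\cdot,z)}g_j=0$ for each fixed $z$; a $z$-dependent multiple of a fixed Killing field satisfies this. The $z$-independence comes from the mixed $\mathcal H$--$\mathcal Z$ component, which is missing from your list of blocks: there the term $g_{\hat S,z}(\bar Y,\partial_{z_i}\bar X)$ appears for arbitrary $\bar Y$, forcing $\partial_{z_i}\bar X=0$. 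Second, your whole computation lives on $M^0$, while the lemma concerns $\mathfrak i_0(M,g)$. Running the computation backwards only produces a Killing field of $g$ on $M^0$; you must still argue that it extends smoothly across $M\setminus M^0$ (the paper uses that this set has real codimension at least $2$) and that the extension has zeros on $M$ (it agrees with a generator of $\T$ on the fibre over any zero of $\bar X$, and $\T$ has fixed points on each fibre), so that it lies in $\mathfrak i_0(M,g)$ and not merely in $\mathfrak i(M^0,g)$. With these two insertions the argument closes; your treatment of the surjectivity onto $\mathfrak i_0(S,g_S)$, by extending a Killing field of $S$ by zero along the $\C P^{d_b}$ factors of the universal cover of $\hat S$, is fine.
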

\begin{proof} Denote by
$K={\rm grad}_{\omega} z \in {C}^{\infty}(M,TM) \otimes \mathfrak{t}^*$ the
family of hamiltonian Killing vector fields generated by $\T$: thus, the span
of $K$ realizes the Lie algebra $\mathfrak t$ of $\T$ as a subalgebra of
$\mathfrak{i}_0(M,g)$.

Let $X$ be a holomorphic vector field on $\hat S$ which is hamiltonian with
respect to $\omega_{\hat S}$; then the projection $X_j$ of $X$ onto the
distribution ${\mathcal H}_j$ (induced by $TS_j$ on the universal cover
$\prod_{j=1}^N S_j$ of $S$) is a Killing vector field with zeros, so
$\iota^{\vphantom{x}}_{X_j}\omega_{\hat S}=-df_j$ for some function $f_j$
(with integral zero). Thus $\sum_{j=1}^N f_jp_j$ is a family of hamiltonians
for $X$ with respect to the family of symplectic forms covered by
$\sum_{j=1}^N \omega_j\otimes p_j$: since this is the curvature $d\hat \theta$
of the connection on $M^0$, $X$ lifts to a holomorphic vector field $\tilde X
=X_H+\sum_{j=1}^N f_j\ip{p_j, K}$ on $M^0$, which is hamiltonian with
potential $\sum_{j=1}^N(\ip{p_j, z}+c_j)f_j$ and commutes with the components
of $K$. (Here $X_H$ is the horizontal lift to $M^0$ with respect to $\hat
\theta$.)  As the metric $g$ extends to $M$ and $\tilde X$ is Killing with
respect to $g$, it extends to $M$ too (note that $M\setminus M^0$ has
codimension $\ge 2$). It is not difficult to see that $\tilde X$ has zeros on
$M$ (in fact, if $s_0\in \hat S$ is a zero of $X$ then $\tilde X -
\sum_{j=1}^N f_j(s_0)\langle p_j, K\rangle$ vanishes on $M^0$) so that $\tilde
X$ is an element of $\mathfrak{i}_0(M,g)$. Of course, this shows that the
Killing potential $\sum_{j=1}^N(\ip{p_j, z}+c_j)f_j$ extends as a smooth
function on $M$.

Conversely, any $\tilde X \in \mathfrak z(\T,g)$ is a $\T^c$-invariant
holomorphic vector field, so its restriction to $M^0$ is projectable to a
holomorphic vector field $X\in \mathfrak{h}_0(\hat S)$. This allows to reverse
the above arguments: for $\tilde X=X_H+f\ip{p, K} + h J \ip{q,K}$ (where $p,q
\in \mathfrak t$ and $f, g \in \cC^{\infty}(\hat S)$) be Killing with respect
to the metric \eqref{M}, we must have $q=0$ and $X$be Killing with respect to
$g_{\hat S}$. Such a vector field maps to zero iff it comes from a constant
multiple of $K$. This gives a projection to $\mathfrak{i}_0(\hat S,g_{\hat
S})$ splitting the inclusion just defined.
\end{proof}
This is the main ingredient in the proof of the following result.

\begin{prop}\label{max-torus} Let $(J, g,\omega)$ be a compatible K\"ahler
metric on $M$ where the stable quotient $\hat S$ is endowed with a local
product K\"ahler structure $(g_{\hat S}, \omega_{\hat S})$, covered by
$\prod_{j=1}^N(S_j,\omega_j)$ with $(S_j, \omega_j)$ having constant scalar
curvature.

Then $g$ is invariant under a maximal torus $G$ of the reduced automorphism
group ${\widetilde {\rm Aut}}_0(M,J)$.
\end{prop}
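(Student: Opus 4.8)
The plan is to construct the required torus by hand: I first enlarge $\T$ by lifting a maximal torus of isometries of the semisimple base $\hat S$, and then prove that the result is maximal by exploiting the fact that on the toric fibre the complexified torus $\T^c$ is its own centralizer. The starting observation is that the base metric $g_{\hat S}$ is itself CSC: its universal cover is the K\"ahler product $\prod_{j=1}^N(S_j,\omega_j)$ of CSC factors, so its scalar curvature is the constant sum of the $\Scal_j$, and this descends to $\hat S$. Hence by the Lichnerowicz--Matsushima theorem~\cite{lichne,matsushima} the group $\mathrm{Aut}_0(\hat S,J_{\hat S})$ is reductive and $\mathrm{Isom}_0(\hat S,g_{\hat S})$ is a maximal compact subgroup; moreover, since the Albanese torus admits no nontrivial algebraic subtori, every maximal torus of $\mathrm{Aut}_0(\hat S)$ already lies in $\widetilde{\mathrm{Aut}}_0(\hat S)$ and may be taken inside $\mathrm{Isom}_0(\hat S,g_{\hat S})$. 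I fix such a maximal torus $\bar G$, with Lie algebra $\bar{\mathfrak g}\subset\mathfrak{i}_0(\hat S,g_{\hat S})$.

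Next I would invoke Lemma~\ref{isometry algebra}, which provides a Lie algebra lift of $\mathfrak{i}_0(\hat S,g_{\hat S})$ into the centralizer $\mathfrak z(\T,g)\subset\mathfrak{i}_0(M,g)$ of $\T$, splitting ${\hat p}_*$. Applying this lift to $\bar{\mathfrak g}$ produces commuting hamiltonian Killing fields of $g$ on $M$ which commute with $\T$; together with $\mathfrak t$ they span an abelian subalgebra of $\mathfrak{i}_0(M,g)\subset\mathfrak h_0(M,J)$ consisting of fields with zeros, and therefore integrate to a torus $G\subset\mathrm{Isom}_0(M,g)\cap\widetilde{\mathrm{Aut}}_0(M,J)$ of dimension $\ell+\dim\bar G$. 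By construction $g$ is $G$-invariant, so the whole point is reduced to showing that $G$ is a \emph{maximal} torus of $\widetilde{\mathrm{Aut}}_0(M,J)$.

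For maximality, suppose $G\subseteq G_{\max}$ with $G_{\max}$ a maximal torus. Since $\T\subset G\subseteq G_{\max}$ and $G_{\max}$ is abelian, $\mathrm{Lie}(G_{\max})$ lies in the centralizer $\mathfrak z_{\mathfrak h_0(M,J)}(\mathfrak t)$ of $\mathfrak t$ in $\mathfrak h_0(M,J)$. The structural input is the exact sequence $0\to\mathfrak t^c\to\mathfrak z_{\mathfrak h_0(M,J)}(\mathfrak t)\xrightarrow{{\hat p}_*}\mathfrak h_0(\hat S,J_{\hat S})$, where $\mathfrak t^c=\mathrm{Lie}(\T^c)$ and ${\hat p}_*$ is the projection of $\T^c$-invariant holomorphic fields to $\hat S$ (already used in the proof of Lemma~\ref{isometry algebra}). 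Exactness on the left is the key point: a $\T$-invariant vertical holomorphic field restricts on each toric fibre (namely $V$, or $W$ in the presence of a blow-down) to a holomorphic field commuting with $\T^c$, and since $\T^c$ is a maximal torus of the automorphism group of a toric variety which coincides with its own centralizer there (that group being generated by $\T^c$ together with root subgroups of nonzero $\T^c$-weight), such a field is fibrewise in $\mathfrak t^c$; its coefficients are holomorphic, hence constant on the compact base, so the field lies in $\mathfrak t^c$. Because the lift is a section of ${\hat p}_*$, we have ${\hat p}_*(\mathrm{Lie}\,G)=\bar{\mathfrak g}$, so ${\hat p}_*(\mathrm{Lie}\,G_{\max})$ generates a torus of $\widetilde{\mathrm{Aut}}_0(\hat S)$ containing the maximal torus $\bar G$ and therefore equal to it; and since $G_{\max}$ is compact, $\mathrm{Lie}(G_{\max})\cap\mathfrak t^c=\mathrm{Lie}(G_{\max})\cap\mathfrak t\subseteq\mathfrak t$, the directions $J\mathfrak t$ generating non-compact subgroups. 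Counting dimensions gives $\dim G_{\max}\le\ell+\dim\bar G=\dim G$, forcing $G_{\max}=G$.

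The step I expect to be the main obstacle is the vertical rigidity, i.e.\ the identification of the kernel of ${\hat p}_*$ with exactly $\mathfrak t^c$: this is precisely where the \emph{toric} nature of the fibre is indispensable, through the self-centralizing property of $\T^c$ and the non-vanishing of the Demazure weights, and it is the point at which the argument would break down for a non-toric fibre. By contrast, the remaining bookkeeping---verifying that the lift of $\bar{\mathfrak g}$ integrates to a genuine torus inside the compact group $\mathrm{Isom}_0(M,g)$, and the Albanese argument matching maximal tori of $\mathrm{Aut}_0$ and $\widetilde{\mathrm{Aut}}_0$ on both $\hat S$ and $M$---is routine.
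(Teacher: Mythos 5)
Your proposal is correct and follows essentially the same route as the paper: both arguments combine the lift/projection structure of Lemma~\ref{isometry algebra} with the Lichnerowicz--Matsushima theorem on the CSC base $\hat S$ (to control the image of $\hat p_*$) and the self-centralizing property of $\T^c$ on the toric fibre (to identify the kernel with $\mathfrak t^c$), the only difference being that the paper phrases maximality as ``$\mathfrak g^{\C}$ is a maximal abelian subalgebra of $\mathfrak h_0(M,J)$'' rather than via your dimension count on a putative larger torus $G_{\max}$. The one peripheral claim you should drop or rephrase is that every maximal torus of $\mathrm{Aut}_0(\hat S)$ lies in $\widetilde{\mathrm{Aut}}_0(\hat S)$ (false for, e.g., a complex torus); what is actually needed, and what the paper uses, is simply a maximal torus of the group of \emph{hamiltonian} isometries of $(\hat S, g_{\hat S})$.
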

\begin{proof} Let $G$ be a maximal torus in the group of hamiltonian
isometries ${\rm Isom}_0(M,g)$, containing the $\ell$-torus $\T$. By
Lemma~\ref{isometry algebra}, $G$ is the product of a maximal torus in the
group of hamiltonian isometries ${\rm Isom}_0(\hat S, g_{\hat S})$ and the
$\ell$-torus $\T$. Denote by $\mathfrak g \subset \mathfrak{i}_0(M,g)$ the
corresponding Lie algebra.  We are going to show that ${\mathfrak g}^{\C} =
\mathfrak g + J \mathfrak g$ is a maximal abelian subalgebra of ${\mathfrak
h}_0(M,J)$.

As in the proof of Lemma~\ref{isometry algebra}, we consider natural
homomorphism ${\hat p}_* \colon \mathfrak{z}(\T,J) \mapsto \mathfrak{h}_0(\hat
S)$ from the centralizer $\mathfrak{z}(\T,J)$ of $\T$ in $\mathfrak{h}_0(M,J)$
to $\mathfrak{h}_0(\hat S)$. The proof of Lemma~\ref{isometry algebra} shows
that the restriction of ${\hat p}_*$ to $\mathfrak z(\T,g)$ is surjective onto
$\mathfrak{i}_0(\hat S, g_{\hat S})$.

By assumption, the induced K\"ahler metric $(g_{\hat S}, \omega_{\hat S})$ on
$\hat S$ is of constant scalar curvature, so by the Lichnerowicz--Matsushima
theorem~\cite{lichne,matsushima}, $\mathfrak{h}_0(\hat S)$ is the
complexification of $\mathfrak{i}_0(\hat S, g_{\hat S})$. It follows that
${\hat p}_* \colon \mathfrak{z}(\T,J) \to {\mathfrak h}_0(S)$ is also
surjective. As $\mathfrak{g} \subset \mathfrak z(\T,g)$ is a maximal abelian
subalgebra, its projection to $\mathfrak{i}_0(S, g_{S})$ must also be a
maximal abelian subalgebra, so is then the image ${\hat
p}_*(\mathfrak{g}^{\C}) \subset \mathfrak{h}_0(\hat S)$ (by using the
Lichnerowicz--Matsushima theorem again).  It follows that ${\mathfrak
g}^{\C}\subset \mathfrak{h}_0(M,J)$ is maximal abelian iff $\mathfrak{g}^{\C}
\cap \mathfrak{h}_{\hat S}(\hat M)$ is a maximal abelian subalgebra of the
complex algebra of fibre-preserving holomorphic vector fields
$\mathfrak{h}_{\hat S}( \hat M)$. But the fibre $V$ is a toric variety under
$\T$, so $\mathfrak{g}^{\C} \cap \mathfrak{h}_{\hat S}(\hat M)=
\mathfrak{t}^{\C}=\mathfrak{t} + J \mathfrak{t}$, which is clearly a maximal
abelian subalgebra of $\mathfrak{h}(V,J_V)$ and hence also of
$\mathfrak{h}_{\hat S}(\hat M)$. \end{proof}

\subsection{The extremal vector field}\label{extremal vector field}

For convenience, we will introduce at places a basis of $\mathfrak t$
(resp. of ${\mathfrak t}^*$), for example by taking $\ell$ generators of the
lattice $\Lambda$ (where $\T = \mathfrak t / 2\pi \Lambda$). This identifies
the vector space $\mathfrak t$ with $\R^{\ell}$ (and ${\mathfrak t}^*$ with
$(\R^{\ell})^*$), and fixes a basis of Poisson commuting hamiltonian Killing
fields $K_1, \ldots , K_{\ell}$ in $K$.  Thus, a $S^2{\mathfrak t}^*$-valued
function ${\bf H}$ on $\Delta$ can be seen as an $\ell \times \ell$-matrix of
functions $(H_{rs})= {\bf H}$ on $\Delta$. Similarly, we write $z=(z_1,
\ldots, z_{\ell})$ for the momentum coordinates with respect to $K_1, \ldots,
K_{\ell}$.

An important technical feature of the K\"ahler metrics given by the
generalized Calabi construction \eqref{M} is the simple expression of their
scalar curvature in terms of the geometry of $(V,g_V)$ and $(\hat S, g_{\hat
S})$ (see e.g.~\cite[p. 380]{hfkg1}):
\begin{equation}\label{scal}
\Scal_g = \sum_{j =1}^N\frac{\Scal_j}{\ip{p_j,z}+c_j}-\frac 1{p(z)}\sum_{r,s=1}^{\ell}
\frac{\del^2}{\del z_r\del z_s} (p(z) H_{rs}),
\end{equation}
where $p(z)=\prod_{j=1}^N (\ip{p_j,z}+c_j)^{d_j}$.  This formula generalizes
the expression obtained by Abreu~\cite{Abreu0} in the toric case (when $\hat
S$ is a point).

Another immediate observation is that the volume form ${\rm
Vol}_{\omega}=\omega^m$ is given by\begin{equation}\label{Vol} \omega^m
=p(z)\Bigl(\omega_{\hat S}^d \wedge \ip{dz\wedge\hat \theta}^{\wedge\ell}
\Bigr) = p(z)\Bigl(\bigwedge_{j} \omega_j^{\wedge d_j}\Bigr) \wedge
\ip{dz\wedge\hat \theta}^{\wedge\ell},
\end{equation}
where $\sum_{j=1}^N d_j=d=m-\ell$.  It follows that integrals over $M$ of
functions of $z$ (pullbacks from $\Delta$) are given by integrals on $\Delta$
with respect to the volume form $p(z)\,dv$, where $dv$ is the (constant)
euclidean volume form on $\mathfrak t^*$, obtained by wedging any generators
of the lattice $\Lambda$.

\smallskip

We now recall the definition in \cite{FM} of the {\it extremal vector
field} of a compact K\"ahler manifold $(M,J,g,\omega)$. Let $G$ be a
maximal connected compact subgroup of the reduced group of automorphisms
${\widetilde {\rm Aut}}_0(M,J)$.\footnote{By a well-known result of
Calabi~\cite{cal-two}, any extremal K\"ahler metric must be invariant under
such a $G$.} Following \cite{FM}, the {\it extremal vector field} of a
$G$-invariant K\"ahler metric $(g,J,\omega)$ on $M$ is the Killing vector
field whose Killing potential is the $L^2$-projection of the scalar
curvature $Scal_g$ of $g$ to the space $\mathfrak g_{\omega}$ of all
Killing potentials (with respect to $g$) of elements of the Lie algebra
${\mathfrak g}$. Futaki and Mabuchi~\cite{FM} showed that this definition
is independent of the choice of a $G$-invariant K\"ahler metric within the
given K\"ahler class $\Omega=[\omega]$ on $(M,J)$. Since the extremal
vector field is necessarily in the centre of ${\mathfrak g}$, it can be
equally defined if we take $G$ be only a maximal torus in ${\widetilde {\rm
Aut}}_0(M,J)$. This remark is relevant to the K\"ahler metrics \eqref{M} as
we have already shown in Proposition~\ref{max-torus} that they are
automatically invariant under such a torus $G$. In this case, by
Lemma~\ref{isometry algebra}, $\mathfrak{g}_{\omega}$ is the direct sum of
${\mathfrak t}_{\omega}$ (which in turn is identified to the space of
affine functions of $z$) and a subspace of Killing potentials of zero
integral of lifts of Killing vector fields on $(\hat S, g_{\hat S})$. We
have shown in the proof of Lemma~\ref{isometry algebra} that the later
potentials are all of the form $\sum_j(\ip{p_j, z}+c_j)f_j$ where $f_j$ is
a function on $\hat S$ of zero integral with respect to $\omega_{\hat
S}^{d}$. As the scalar curvature of a compatible metric is a function of
$z$ only (see \eqref{scal}, we assume $Scal_j$ are constant) it follows
from \eqref{Vol} that the $L^2$-projection of $Scal_g$ to ${\mathfrak
g}_{\omega}$ lies in $\mathfrak{t}_{\omega}$. This shows that the extremal
vector field lies in $\mathfrak t$ and that the projection of $\Scal_g$
orthogonal to the Killing potentials of $g$ takes the form:
\begin{equation*}
\Scal_g^{\perp} = \ip{A,z}+B+\Scal_g,
\end{equation*}
where
\begin{gather}\label{extremal-field}
\begin{cases}
\sum_s \alpha_s A_s + \alpha B +2\beta &=0,\\
\sum_s \alpha_{rs} A_s + \alpha_r B +2\beta_r &=0,
\end{cases}\\
\tag*{with} \nonumber
\alpha=\int_\Delta p(z) dv, \qquad \alpha_r =\int_\Delta z_r p(z) dv,
\qquad \alpha_{rs} =\int_\Delta z_r z_s p(z) dv,\\ \nonumber
\begin{split} \nonumber
\beta&=\frac12\int_{\Delta} \Scal_g p(z)dv
=\int_{\del\Delta} p(z) d\sigma +\frac12
\int_\Delta \Bigl(\sum_j \frac{\Scal_j}{\ip{p_j,z}+c_j}\Bigr) p(z) dv,\\ \nonumber
\beta_r&=\frac12\int_{\Delta} \Scal_g z_r p(z)dv
=\int_{\del\Delta} z_r p(z) d\sigma +\frac12
\int_\Delta \Bigl(\sum_j \frac{\Scal_j}{\ip{p_j,z}+c_j}\Bigr) z_r p(z) dv. \nonumber
\end{split}
\end{gather}
Here $d\sigma$ is the $(\ell-1)$-form on $\del\Delta$ with $u_i\wedge
d\sigma=-dv$ on the face $F_i$ with normal $u_i$. These formulae are immediate
once one applies the divergence theorem and the boundary conditions
\eqref{eq:toricboundary} for ${\bf H}$, noting that the normals are inward
normals, which introduces a sign compared to the usual formulation of the
divergence theorem.

The extremal vector field of $(M,g,J,\omega)$ is $-\ip{A,K}$, where $K\in
C^\infty(M,TM)\otimes\mathfrak t^*$ is the generator of the $\mathbb T$
action.

\subsection{The extremal equation and stability of its solutions under small perturbation}

It follows from the considerations in Sect.~\ref{extremal vector field} that
on a given manifold $M$ of the type we consider, finding a {\it compatible}
extremal K\"ahler metric $(g,\omega)$ of the form \eqref{M} reduces to solving
the equation (for a unknown symplectic potential $U \in {\cS}(\Delta)$)
\begin{equation}\label{operator}
\langle A, z\rangle + B + \sum_{j =1}^N \frac{Scal_j}{c_j +
\langle p_j,z \rangle} - \frac{1}{p(z)} \sum_{r,s} \frac{\partial^2}{\partial
z_r \partial z_s} \big(p(z)H_{rs}\big) =0,
\end{equation}
where 
\begin{bulletlist}
\item $(H_{rs})={\bf H}= ({\rm Hess} (U))^{-1}$;
\item $(c_j, p_j, Scal_j)$ are fixed constants;
\item $p(z) = \prod_{j=1}^N (c_j + \langle p_j,z \rangle)^{d_j}$ is strictly
positive on $\Delta^0$ but vanishes on the blow-down faces $F_b, \, b \in
\cB$;
\item $A$ and $B$ are expressed in terms of $(c_j, p_j, Scal_j)$ by
\eqref{extremal-field}.
\end{bulletlist}

\smallskip
Recall from Sect.~\ref{s:generalised-calabi} that the real constants $c_a, \,
a \in \cA$ parametrize compatible K\"ahler classes on a given manifold $M$.  A
general result of LeBrun--Simanca~\cite{Le-Sim1} affirms that K\"ahler classes
admitting extremal K\"ahler metric form an open subset of the K\"ahler cone.
We want to obtain a relative version of this result, by showing that {\it
compatible} K\"ahler classes which admit a {\it compatible} extremal K\"ahler
metric is an open condition on the parameters $c_a$.

We will state and prove our stability result in a slightly more general
setting, by considering \eqref{operator} as a family of differential operators
on $\cS(\Delta)$, parametrized by $\lambda \in \{ (c_a,p_a, Scal_a), \, a \in
\cA \}$ (thus $\lambda$ takes values in a $(2+\ell) |\cA|$-dimensional
euclidean vector space). For any $\lambda$ such that $\langle p_a, z \rangle +
c_a >0$ on $\Delta$, we consider
\begin{equation}\label{P-lambda}
P_{\lambda} (U) =\langle A_{\lambda}, z\rangle + B_{\lambda} + \sum_{j =1}^N \frac{Scal_j}{c_j +
\langle p_j,z \rangle} - \frac{1}{p_{\lambda}p_0} \sum_{r,s=1}^{\ell} \frac{\partial^2}{\partial
z_r \partial z_s} \big(p_{\lambda}p_0 H_{rs}\big), 
\end{equation}
where $(H_{rs})= {\rm Hess}(U)^{-1}$, $p_{\lambda}(z) = \prod_{a \in \cA}
(\langle p_a, z \rangle + c_a)^{d_a}$, $p_0(z) = \prod_{b \in \cB} (\langle
u_b, z \rangle + c_b)^{d_b}$, and $A_{\lambda}, B_{\lambda}$ are introduced by
\eqref{extremal-field}. The central result of this section is the following
one.

\begin{prop}\label{perturbation} Let $(g_0, \omega_0)$ be a compatible
extremal K\"ahler on $M$, with symplectic potential $U_0$ and parameters
$\lambda_0=(c_a^0,p_a^0,Scal_a^0), \, a \in \cA$. Then there exists
$\varepsilon >0$ such that for any $\lambda$ with $|\lambda -
\lambda_0|<\varepsilon$ there exists a symplectic potential $U_{\lambda} \in
\cS(\Delta)$ such that $P_{\lambda} (U_{\lambda})=0$ on $\Delta^0$.
\end{prop}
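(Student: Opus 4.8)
The plan is to prove Proposition~\ref{perturbation} by the implicit function theorem, perturbing from the known solution $(U_0,\lambda_0)$ of $P_\lambda(U)=0$. First I would fix the analytic framework, regarding $(U,\lambda)\mapsto P_\lambda(U)$ as a smooth map between Banach completions of the relevant function spaces; these must be adapted to the logarithmic (Guillemin) boundary behaviour $\tfrac12\sum_i(\ip{u_i,z}+c_i)\log(\ip{u_i,z}+c_i)$ of symplectic potentials, as in \cite{Le-Sim1} and in Donaldson's treatment of the toric case. Two structural facts make this clean. First, the divergence theorem together with the boundary conditions \eqref{eq:toricboundary} shows that $\int_\Delta \Scal_g\,p\,dv$ and $\int_\Delta \Scal_g\,z_r\,p\,dv$ are independent of the interior behaviour of $U$, so the constants $A_\lambda,B_\lambda$ fixed by \eqref{extremal-field} depend on $\lambda$ only (this is the polytope incarnation of the fact that the extremal vector field is a class invariant). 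Second, and for the same reason, $P_\lambda(U)$ is, for every admissible $U$, $L^2(p\,dv)$-orthogonal to the affine functions $1,z_1,\dots,z_\ell$; hence $P_\lambda$ takes values in the closed subspace $Y$ orthogonal to the affine functions, which I take as the target (the smooth $\lambda$-dependence of the weight $p=p_\lambda p_0$ being absorbed by composing with the $L^2$-projection onto $Y$).

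Next I would linearise in the $U$-direction at $(U_0,\lambda_0)$. Because $A_{\lambda_0},B_{\lambda_0}$ are constant in $U$, the linearisation is that of the scalar curvature \eqref{scal}, namely $\dot U\mapsto -\tfrac1p\sum_{r,s}\del_r\del_s\bigl(p\,\dot H_{rs}\bigr)$ with $\dot H=-\mathbf H\,\mathrm{Hess}(\dot U)\,\mathbf H$. Up to sign this is precisely the $\T$-invariant Lichnerowicz operator $L$ of the compatible extremal metric $g_0$ written in momentum coordinates --- a fourth-order, formally self-adjoint (with respect to $p\,dv$), elliptic operator, exactly the operator whose invertibility drove the proof of Lemma~\ref{stability}. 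Since $\mathbf H=(\mathrm{Hess}\,U)^{-1}$ is unchanged by adding an affine function to $U$, the operator $L$ annihilates the affine functions and descends to a map from perturbations orthogonal to affine functions into $Y$.

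The crux --- and the step I expect to be the main obstacle --- is to prove that $L$ is an isomorphism from the space of admissible perturbations orthogonal to the affine functions onto $Y$. Injectivity is the geometric heart and mirrors Lemma~\ref{stability}: integrating by parts (the boundary conditions \eqref{eq:toricboundary} killing the boundary terms) gives $\int_\Delta \dot U\,L\dot U\,p\,dv=\|(Dd\dot U)^-\|^2$, so any $\dot U\in\ker L$ is a Killing potential of $g_0$; but $\dot U$ is a function of $z$ alone, and among such functions the Killing potentials are exactly the affine ones, i.e. the elements of $\mathfrak t_\omega$ --- equivalently, by the computation $\mathfrak g^\C\cap\mathfrak h_{\hat S}(\hat M)=\mathfrak t^\C$ in the proof of Proposition~\ref{max-torus}, the only fibre-tangent holomorphic vector fields lie in $\mathfrak t$. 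Orthogonality to the affine functions then forces $\dot U=0$. Self-adjointness and standard elliptic (Fredholm) theory upgrade injectivity to surjectivity, so $L$ is an isomorphism. With the linearisation inverted, the implicit function theorem on Sobolev completions $L^{2,k}$ ($k\gg1$), together with the elliptic regularity of solutions of \eqref{P-lambda} exactly as in \cite{Le-Sim,Le-Sim1,Fuj-Sch}, yields for $|\lambda-\lambda_0|<\eps$ a symplectic potential $U_\lambda\in\cS(\Delta)$ depending smoothly on $\lambda$, with $U_{\lambda_0}=U_0$ and $P_\lambda(U_\lambda)=0$ on $\Delta^0$.

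The genuinely delicate points are analytic rather than formal. First, one must choose function spaces in which the conditions defining $\cS(\Delta)$ are preserved under small perturbation and in which $L$ is Fredholm despite the degeneracy of the weight $p=p_\lambda p_0$ along the blow-down faces $F_b$, $b\in\cB$, where $p_0$ vanishes to order $d_b$. Second, since the constants $c_a$, $a\in\cA$, vary with $\lambda$, the singular model of the potential along the faces $F_a$ changes, so $U_\lambda-U_0$ is not smooth up to the boundary; one must either build the varying singular model into the spaces or argue, as in Donaldson's analysis of the toric case, that the relevant estimates are uniform for $\lambda$ near $\lambda_0$. Granting these analytic inputs, the scheme above closes and establishes the openness of the set of parameters admitting a compatible extremal metric.
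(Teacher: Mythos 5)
Your outline is the natural one and it correctly identifies the two places where the real work lies, but in both places it defers exactly the content that constitutes the paper's proof, so as it stands there is a genuine gap. First, you propose to run the implicit function theorem directly on Banach completions of $\cS(\Delta)$ ``adapted to the logarithmic boundary behaviour''. The paper explicitly declines to do this (``our lack of understanding of the convergence in this space \dots leads us to make an additional technical step''): instead of working with symplectic potentials on the polytope, where $P_\lambda$ is a fourth-order operator whose coefficients and weight $p_\lambda p_0$ degenerate along $\del\Delta$ (and whose singular model along the faces $F_a$ moves with $c_a$), it transfers the whole problem to the fixed compact manifold $W$: Lemma~\ref{geometric} rewrites $P_\lambda(U)$ in terms of $\Scal_W$, $\Delta_W z_r$ and $g_W(K_r,K_s)$, all smooth on $W$, and Lemma~\ref{lem:compatible} converts the unknown from $U\in\cS(\Delta)$ to a K\"ahler potential $f\in C^\infty_0(\Delta)\subset {C^\infty(W)}^T$ with fixed symplectic form, so that the resulting operator $Q_\lambda$ is a genuine quasi-elliptic operator between ordinary Sobolev spaces on $W$. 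Without this (or some substitute weighted-space theory on $\Delta$, which you would have to build), neither the smoothness of $(\lambda,U)\mapsto P_\lambda(U)$ nor the Fredholm theory you invoke is available.

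Second, and more seriously, your claim that ``self-adjointness and standard elliptic (Fredholm) theory upgrade injectivity to surjectivity'' is precisely the step the paper singles out as the missing piece. The linearization is indeed ($-2$ times) the Lichnerowicz operator $L$ of $(g_0,\omega_0)$, and its injectivity on functions of $z$ orthogonal to affine functions follows as you say; but $L$ must be shown to be surjective as an operator on the \emph{proper closed subspace} $C_{\perp}^{\infty}(\Delta)\subset {C_{\perp}^{\infty}(M)}^G$ of functions constant on the level sets of $z$. Ellipticity of $L$ on $M$ gives Fredholmness on the full $G$-invariant Sobolev space, not on this subspace, and ``kernel $=$ cokernel'' does not restrict automatically. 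The paper's lemma proves surjectivity by a concrete argument: assuming a nonzero $u=u(z)$ is $L^2$-orthogonal to $L(C_{\perp}^{\infty}(\Delta))$, it decomposes $L(f)$ for an arbitrary $f\in{C_{\perp}^{\infty}(M)}^G$ along the fibration (terms involving $L(f_s)$, $L_{\hat S,z}(f_z)$, mixed Laplacians, and the coefficients $R_j(z)$) and checks term by term, using self-adjointness of the base operators and the local product structure of $g_{\hat S}$, that $u$ is then orthogonal to all of $L({C_{\perp}^{\infty}(M)}^G)$ --- contradicting the isomorphism already established in Lemma~\ref{stability}. You would need to supply this fibrewise decomposition (or an equivalent invariance argument for the orthogonal complement, with the domain issues controlled) for your surjectivity claim to stand.
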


The proof of this proposition has several steps and will occupy the rest of
this section.

\smallskip

It is not immediately clear from \eqref{P-lambda} that $P_{\lambda}$ is a
well-defined differential operator: in the presence of blow-downs, the terms
$\frac{Scal_b}{c_b + \langle p_b,z \rangle}$ and $\frac{1}{p_{0}(z)}$ become
degenerate on the boundary of $\Delta$.\footnote{This does not affect the
principal part of $P_{\lambda}$, which is concentrated in the scalar curvature
$Scal_V = -\sum_{r,s} \frac{\partial^2}{\partial z_r \partial z_s} H_{rs}$ of
the induced K\"ahler metric $g_{V}$ on $V$~\cite{Abreu0}, and is manifestly
independent of $\lambda$.}  Of course, for $\lambda=\lambda_0$ we know from
\eqref{operator} that $P_{\lambda_0}(U)= Scal_g ^{\perp}$ where $g$ is the
compatible metric on $M$ corresponding to $U$, and $Scal_g ^{\perp}$ is the
$L^2$-projection of the scalar curvature to the space of functions orthogonal
to the Killing potentials of $g$. However, for generic values of $\lambda$ the
data $(c_a, p_a, Scal_a)$ are not longer associated with a compatible K\"ahler
class on a smooth manifold: for this to be true $p_a$ and $Scal_a$ must
satisfy {\it integrality} conditions. To overcome this technical difficulty,
we are going to rewrite our equation on the smooth compact manifold $W$. (Note
that for $b \in \cB$, $p_b=u_b, c_b, Scal_b= 2d_b(d_b+1)$ are fixed in our
construction.)

Recall from Sect.~\ref{s:generalised-calabi} that any symplectic potential $U
\in \cS(\Delta)$ introduces a compatible K\"ahler metric $(g_W,\omega_W)$ on
the manifold $W$ obtained by blowing down $\hat W= P_0 \times_{\T} V$. Thus,
$(W, g_{W}, \omega_{W})$ itself is obtained by the generalized Calabi
construction with $S$ being a point.

By a well-known result of G. W. Schwarz~\cite{Schwarz}, the space
${C^{\infty}(V)}^{\T}$ of $\T$-invariant smooth functions on the toric
symplectic manifold $(V, \omega_V, \T)$ is identified with the space of
pullbacks (via the momentum map $z$) of smooth functions $C^{\infty}(\Delta)$
on $\Delta$; similarly, the space of smooth $\T$-invariant functions on $W$
(resp. on $M$) which are constant on the inverse images of the momentum map
$z$ is identified with the space $C^{\infty}(\Delta)$. We will use implicitly
these identification throughout. Occasionally, when we want to emphasize the
dependence of this identification on $z$, we will denote these isomorphisms by
$S_z$. With this convention, we have

\begin{lemma} \label{geometric} Let $U\in \cS(\Delta)$ be a symplectic
potential of a compatible K\"ahler metric $g_V$ on $(V, \omega_V, \T)$ and
$(g_W,\omega_W)$ be the corresponding compatible K\"ahler metric on $W$.
Then, for any $\lambda$ such that $\langle p_a, z \rangle + c_a >0$ on
$\Delta$,
\begin{equation*}
\begin{split}
P_{\lambda} (U)= & \langle A_{\lambda}, z\rangle + B_{\lambda} +
\sum_{a \in \cA} \frac{Scal_a}{c_a + \langle p_a,z \rangle} + Scal_{W} \\
& - \frac{1}{p_{\lambda}(z)} \sum_{r,s=1}^{\ell} \Big(\Big(\frac{\partial^2
p_{\lambda}}{\partial z_r \partial z_s}\Big)(z) g_W (K_r,K_s)\Big) \\
& +  \frac{2}{p_{\lambda}(z)} \sum_{r=1}^{\ell} \Big(\Big(\frac{\partial p_{\lambda}}{\partial
z_r}\Big)(z)\Delta_{W}z_r\Big),
\end{split}
\end{equation*}
where $Scal_W$ and $\Delta_W$ respectively denote the scalar curvature and the
riemannian laplacian of $g_W$, and $dz_r = - \omega_W(K_r, \cdot)$.
\end{lemma}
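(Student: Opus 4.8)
The plan is to rewrite the divergence-type term in \eqref{P-lambda} by peeling off the factor $p_\lambda$, which depends only on the $\cA$-data, and to recognise each resulting piece as a geometric quantity of the compatible metric $g_W$ on the blow-down $W$. First I would split the curvature sum $\sum_{j=1}^N=\sum_{a\in\cA}+\sum_{b\in\cB}$ (recalling $p_b=u_b$ for $b\in\cB$) and apply the Leibniz rule to $\frac{\del^2}{\del z_r\del z_s}\big(p_\lambda\,(p_0 H_{rs})\big)$, which produces three groups of terms according to how the two derivatives distribute between $p_\lambda$ and $p_0H_{rs}$ (the factor $2$ in the middle term reflecting the symmetry $H_{rs}=H_{sr}$):
\begin{equation*}
\begin{split}
\frac{1}{p_\lambda p_0}\sum_{r,s}\frac{\del^2}{\del z_r\del z_s}\big(p_\lambda p_0 H_{rs}\big)
={}& \frac{1}{p_\lambda}\sum_{r,s}\frac{\del^2 p_\lambda}{\del z_r\del z_s}H_{rs}
+\frac{2}{p_\lambda p_0}\sum_{r,s}\frac{\del p_\lambda}{\del z_r}\frac{\del}{\del z_s}\big(p_0H_{rs}\big)\\
&+\frac{1}{p_0}\sum_{r,s}\frac{\del^2}{\del z_r\del z_s}\big(p_0H_{rs}\big).
\end{split}
\end{equation*}

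Next I would interpret the three groups, leaving the terms $\ip{A_\lambda,z}+B_\lambda+\sum_{a\in\cA}Scal_a/(c_a+\ip{p_a,z})$ untouched. For the last group, the key observation is that $W$ itself is produced by the generalized Calabi construction with $S$ a point, the factors $(\C P^{d_b},\omega_b)$, $b\in\cB$, playing the role of the semisimple base and $V$ the role of the fibre; hence the scalar curvature formula \eqref{scal} applies verbatim on $W$ with $p(z)$ replaced by $p_0(z)$, giving $\frac{1}{p_0}\sum_{r,s}\frac{\del^2}{\del z_r\del z_s}\big(p_0H_{rs}\big)=\sum_{b\in\cB}\frac{Scal_b}{\ip{u_b,z}+c_b}-Scal_W$. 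Substituting this back, the $\cB$-part of the curvature sum of \eqref{P-lambda} cancels exactly against the $\sum_{b\in\cB}Scal_b/(\ip{u_b,z}+c_b)$ produced here, leaving $+\,Scal_W$. For the first group, I would read off from the metric ansatz \eqref{W} that the fibre block $\ip{\theta_0,{\bf H},\theta_0}$ gives $g_W(K_r,K_s)=H_{rs}$ (using $\theta_{0,s}(K_r)=\delta_{rs}$ together with the fact that $K_r$ is annihilated by $dz$ and by the base forms), so this group is $\frac{1}{p_\lambda}\sum_{r,s}\frac{\del^2 p_\lambda}{\del z_r\del z_s}g_W(K_r,K_s)$.

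The crux is the middle group, which I would convert into the Laplacian term via the identity
\begin{equation*}
\Delta_W z_r=-\frac{1}{p_0}\sum_{s}\frac{\del}{\del z_s}\big(p_0 H_{rs}\big).
\end{equation*}
This is the delicate step. I would prove it by computing $\mathrm{div}\,\grad z_r$ directly in the momentum--angle coordinates of \eqref{W}: since $\grad z_r=\sum_s H_{rs}\,\del_{z_s}$ (because the inverse metric satisfies $g_W^{z_rz_s}=H_{rs}$), and since the Riemannian volume of $g_W$ equals $p_0(z)$ times a factor independent of $z$ (the fibre block $\mathrm{diag}({\bf G},{\bf H})$ has unit determinant, while the base block contributes $\prod_b(\ip{u_b,z}+c_b)^{2d_b}\det g_b$), only the $z$-derivatives survive and one gets $\mathrm{div}\,\grad z_r=\frac{1}{p_0}\sum_s\frac{\del}{\del z_s}(p_0H_{rs})$; the sign is then fixed by the Hodge (positive) Laplacian convention already used in \eqref{variation}. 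Granting this, the middle group becomes $-\frac{2}{p_\lambda}\sum_r\frac{\del p_\lambda}{\del z_r}\Delta_W z_r$, which after the overall minus sign in front of the divergence term of \eqref{P-lambda} yields the desired $+\frac{2}{p_\lambda}\sum_r\frac{\del p_\lambda}{\del z_r}\Delta_W z_r$. Assembling the three groups and cancelling the $\cB$-curvature sum then gives precisely the asserted expression; I expect the Laplacian identity, and the careful bookkeeping of signs, to be the only non-routine points.
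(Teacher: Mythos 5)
Your proposal is correct and follows essentially the same route as the paper: expand the double divergence by the Leibniz rule, identify the $\cB$-part together with the pure $p_0H_{rs}$ term as $\Scal_W$ via \eqref{scal} applied to $W$ (the Calabi construction with $S$ a point), read off $g_W(K_r,K_s)=H_{rs}$ from \eqref{W}, and convert the cross term using $\Delta_W z_r=-\frac{1}{p_0}\sum_s\partial_{z_s}(p_0H_{rs})$. The only (immaterial) difference is that you derive the Laplacian identity by computing $\mathrm{div}\,\mathrm{grad}\,z_r$ with the volume form \eqref{Vol}, whereas the paper obtains it by computing $dd^c_Wf$ and wedging with $\omega_W$.
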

\begin{proof} We work on the open dense subset $W^0=P_0\times_{\T} V^0$ where
the compatible metric $(g_{W}, \omega_{W})$ takes the explicit form \eqref{W}.
The formula \eqref{scal} for the scalar curvature of the compatible metric
$g_{W}$ then specifies to
\begin{equation*}
Scal_{W} = \sum_{b \in \cB}\frac{Scal_b}{\ip{p_b,z}+c_b}-\frac
1{p_0(z)}\sum_{r,s=1}^{\ell} \frac{\del^2}{\del z_r\del z_s} (p_0(z) H_{rs}).
\end{equation*}
Still using the explicit form \eqref{W} of the K\"ahler structure, we
calculate that for the pullback to $W$ of a smooth function $f(z)$ on $\Delta$
\begin{equation}\label{eq:ddc}
\begin{split}
dd_W^c f & = d \Big( \sum_{r,s=1}^{\ell} \frac{\partial f}{\partial z_s}
H_{rs} (\theta_0)_r \Big) \\ & = \sum_{k,r,s=1}^{\ell} \frac{\partial
}{\partial z_k}\Big(\frac{\partial f}{\partial z_s} H_{rs}\Big) dz_k \wedge
(\theta_0)_r + \sum_{b\in \cB} \Big(\sum_{r,s=1}^{\ell}\frac{\partial
f}{\partial z_s}H_{rs}p_{br}\Big)\omega_b,
\end{split}
\end{equation}
where the decompositions $\theta_0=((\theta_{0})_1, \ldots,
(\theta_{0})_{\ell})$ and $p_b=(p_{b1}, \ldots, p_{b\ell})$ are with respect
to the chosen basis of $\mathfrak t$ and ${\mathfrak t}^*$.  Wedging with
$\omega_W$, we obtain the following expression for the laplacian
\begin{equation}\label{laplacian}
\Delta_W f = - \frac{1}{p_0(z)} \sum_{r,s=1}^{\ell} \frac{\partial}{ \partial
z_r} \Big(p_{0}(z) \frac{\partial f}{\partial z_s}H_{rs}\Big).
\end{equation}
Specifying \eqref{laplacian} to $f=z_r$ and putting the above formulae back in
\eqref{P-lambda} implies the lemma. \end{proof}

Note that $\frac{1}{p_{\lambda}(z)}$ and $\frac{Scal_a}{c_a + \langle p_a,z
\rangle}$ pull back to smooth functions on $W$ for $\lambda$ such that $c_a +
\langle p_a,z \rangle >0$ on $\Delta$, and $A_{\lambda}$ and $B_{\lambda}$ are
well-defined and depend smoothly on $\lambda$ (at least for $\lambda$ close to
$\lambda_0$). Thus, Lemma~\ref{geometric} implies that $P_{\lambda}$ is a
fully non-linear $4$-th order differential operator which depends smoothly on
$\lambda$ (for $\lambda$ sufficiently close to $\lambda_0$). It follows that
$P_\lambda (U) \in C^{\infty}(\Delta)$ for any $U \in \cS(\Delta)$.

\smallskip

Our problem is formulated in terms of compatible K\"ahler metrics on $V$ (or,
equivalently, on $W$ and $M$) with respect to a fixed symplectic form
$\omega_V$ (resp. $\omega_{W}$ and $\omega$). This introduces the space of
symplectic potentials $\cS(\Delta)$ where we have to work with smooth
functions on $\Delta^0$ which have a prescribed boundary behaviour on
$\partial \Delta$. Our lack of understanding of the convergence in this space
(with respect to suitable Sobolev norms) leads us to make an additional
technical step and reformulate our initial problem as an existence result on a
suitable subspace of the space ${\cM_{\Omega}(M)}^G \cong \{ f \in {C^{\infty}_0(M)}^G : \omega_0 + dd^c f >0 \}$ of $G$-invariant K\"ahler
metrics in the K\"ahler class of $(g_0, J_0, \omega_0)$, where ${C^{\infty}_0(M)}^G$ denotes the space of $G$-invariant smooth functions on $M$
of zero integral with respect to $\omega_0^m$ (thus ${\cM_{\Omega}(M)}^G$ is
viewed as an open set in ${C^{\infty}_0(M)}^G$ with respect to $||\cdot
||_{{C}^2}$). Once this interpretation is achieved, we will apply the implicit
function theorem along the lines of the proof of Lemma~\ref{stability}.

First of all, note that the Frech\'et space $C^{\infty}(\Delta)$ pulls back
via $z$ to a closed subspace in ${C^{\infty}(V)}^{\T}$, ${C^{\infty}(W)}^T$ and
${C^{\infty}(M)}^G$, where $T$ (resp. $G$) is a maximal torus in ${\widetilde
{\rm Aut}}_0(W)$ (resp. ${\widetilde {\rm Aut}}_0(M)$) containing $\T$, as in
Proposition~\ref{max-torus}: this follows easily from the description of the
Lie algebras of $T$ and $G$ given in Lemma~\ref{isometry
algebra}. Furthermore, by \eqref{Vol}, the corresponding {\it normalized}
subspaces of functions with zero integral for the measures
$p_{\lambda}(z)p_0(z){\rm Vol}_{\omega_V^0}$, $p_{\lambda}(z){\rm
Vol}_{\omega_W^0}$ and ${\rm Vol}_{\omega_0}$, respectively, are identified
with the space ${C}_0^{\infty}(\Delta)$ of smooth functions of zero integral
with respect to the volume form $d\mu_0= p_{\lambda_0}(z)p_0(z) dv$ on
$\Delta^0$: this normalization will be used throughout.

Secondly, to adopt the classical point of view of K\"ahler metrics within a
given K\"ahler class on a fixed complex manifold, we consider the Fr\'echet
space ${\cM_{\Omega}(V)}^{\T} \cong \{ f \in {C}_0^{\infty}(\Delta) :
\omega_{V}^0 + dd^c_V f >0 \}$ of $\T$-invariant K\"ahler metrics in the
K\"ahler class $\Omega=[\omega_V^0]$, where the complex structure on $V$
(resp. on $W$ and $M$) is determined (and will be fixed throughout) by the
initial compatible metric $(g_0, \omega_0)$; similarly, we introduce the
spaces ${\cM_{\Omega}(W)}^T$ and ${\cM_{\Omega}(M)}^G$ of K\"ahler metrics in the
given K\"ahler class which are invariant under a maximal torus (see
Proposition~\ref{max-torus}). These three spaces are interrelated by the
generalized Calabi construction as follows.

\begin{lemma}\label{lem:compatible} Let
${\tilde \omega}_V=\omega_{V}^0 + dd^c_V f$ be a K\"ahler metric in
${\cM_{\Omega}(V)}^{\T}$. Then ${\tilde \omega}_{W}= \omega^0_W + dd^c_W f$ and
${\tilde \omega} = \omega_0 + dd^c_M f$ define K\"ahler metrics in
${\cM_{\Omega}(W)}^T$ and ${\cM_{\Omega}(M)}^G$ respectively, such that ${\tilde
\omega}_V, {\tilde \omega}_{W}$ and ${\tilde \omega}$ are linked by the
generalized Calabi construction on $M$, with respect to the data $(\Delta,
\hat S, \hat \theta, \omega_j)$ of the initial metric $\omega_0$, but with
momentum co-ordinate $\tilde z = z +d^c_V f (K)$.
\end{lemma}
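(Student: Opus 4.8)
The plan is to show that adding the $\T$-invariant potential $f$ --- which, by Schwarz's theorem \cite{Schwarz}, is a smooth function of the momentum coordinate $z$ on $\Delta$ --- merely shifts the fibre momentum map and leaves the fibred Calabi structure \eqref{M} intact apart from the replacement $z\mapsto\tilde z$. First I would record the effect of $dd^c f$ on $V$. Since $f=f(z)$, the computation \eqref{eq:ddc} performed for $W$ applies verbatim on the fibre upon replacing $\theta_0$ by $d\ang$ and dropping the $\cB$-terms, giving $d^c_V f=\ip{{\bf H}\nabla f,d\ang}$ with $\nabla f=(\del f/\del z_s)$; hence $d^c_V f(K_r)=({\bf H}\nabla f)_r$ and $\tilde z=z+{\bf H}\nabla f$. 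Because $f$ and $J_V$ are $\T$-invariant, Cartan's formula gives $\iota_{K_r}dd^c_V f=-d\bigl(d^c_V f(K_r)\bigr)$, so that $\iota_{K_r}\tilde\omega_V=-dz_r-d\bigl(d^c_V f(K_r)\bigr)=-d\tilde z_r$. Thus $\tilde z$ is exactly the momentum map of $\tilde\omega_V$ and $\tilde\omega_V=\ip{d\tilde z\wedge d\ang}$.

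Next I would establish the fibred identity on $M$ (and, by keeping only the $\cB$-terms, on $W$). Inserting the analogue of \eqref{eq:ddc} on $M$ --- where $\theta_0$ is replaced by $\hat\theta$ and $d\hat\theta=\sum_j\omega_j\otimes p_j$ --- into $\omega_0$, the differentiated part of $dd^c_M f$ combines with $\ip{dz\wedge\hat\theta}$ to produce $\ip{d\tilde z\wedge\hat\theta}$, while the curvature part contributes $\sum_j\ip{p_j,{\bf H}\nabla f}\,\omega_j=\sum_j\ip{p_j,\tilde z-z}\,\omega_j$ to the base. The two $\ip{p_j,z}\,\omega_j$ terms cancel and I obtain
\begin{equation*}
\tilde\omega=\omega_0+dd^c_M f=\sum_{j=1}^N(\ip{p_j,\tilde z}+c_j)\,\omega_j+\ip{d\tilde z\wedge\hat\theta},
\end{equation*}
which is precisely \eqref{M} with the same data $(\Delta,\hat S,\hat\theta,\omega_j,p_j,c_j)$ but momentum coordinate $\tilde z$; the same manipulation yields \eqref{W} for $\tilde\omega_W$.

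It remains to see that $(\tilde z,\tilde U)$ genuinely fit the construction, so that $\tilde\omega_W,\tilde\omega$ are Kähler. By hypothesis $\tilde\omega_V\in\cM_\Omega(V)^{\T}$ is Kähler for the fixed $J_V$, so by standard toric theory \cite{Abreu1,Guillemin,Do2} it has a symplectic potential $\tilde U\in\cS(\Delta)$ with $\tilde{\bf G}={\rm Hess}_{\tilde z}(\tilde U)$ positive definite and $J_V\,d\ang=-\ip{\tilde{\bf G},d\tilde z}$; in particular the complex structure built from $(\tilde U,\tilde z)$ through \eqref{toricJ} is again $J_V$, so the total complex structure produced by \eqref{M} is unchanged and equal to $J_0$. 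The boundary condition ${\bf H}(u_i,\cdot)=0$ on each face $F_i$ forces $\ip{u_i,\tilde z}=\ip{u_i,z}=-c_i$ there, so $\tilde z$ ranges over the same Delzant polytope $\Delta$ and respects its faces; consequently $\ip{p_a,\tilde z}+c_a>0$ on $\Delta$ for $a\in\cA$, while $\ip{u_b,\tilde z}+c_b$ still vanishes on $F_b$ for $b\in\cB$. With $\tilde U\in\cS(\Delta)$, $\tilde z$ ranging over $\Delta$, and these positivity and vanishing conditions, the generalized Calabi construction \cite{hfkg2} guarantees that \eqref{W} and \eqref{M} extend smoothly across $M\setminus M^0$ and the blown-down faces to $\T$-invariant Kähler metrics on $W$ and $M$; by the identity of the previous step these are $\tilde\omega_W$ and $\tilde\omega$. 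Hence $\tilde\omega_W\in\cM_\Omega(W)^T$, $\tilde\omega\in\cM_\Omega(M)^G$, linked to $\tilde\omega_V$ by the construction with momentum $\tilde z$.

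The hard part will be this last global step: on $M^0$ everything reduces to a formal manipulation of \eqref{M}, and the genuine content is that the potential-shifted form extends smoothly and stays positive over the degenerate locus (the toric boundary and the blow-down faces, where $p(z)$ vanishes). I would handle this not by re-proving the extension by hand, but by reducing it entirely to the hypothesis that $\tilde\omega_V$ is Kähler on all of $V$ --- which furnishes $\tilde U\in\cS(\Delta)$ with the correct first-order boundary behaviour \eqref{eq:toricboundary} --- together with the already-established smooth-extension statement of the generalized Calabi construction \cite{hfkg2}.
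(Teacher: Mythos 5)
Your proposal is correct and follows exactly the route the paper intends: the paper's own proof consists of the single sentence ``a direct calculation based on the expressions of $dd^c_V f$, $dd^c_W f$ and $dd^c_M f$, see \eqref{eq:ddc}; we leave the details to the reader,'' and your argument is precisely that calculation carried out, with the global extension correctly delegated to the hypothesis $\tilde\omega_V\in{\cM_{\Omega}(V)}^{\T}$ and the smooth-extension statement of the generalized Calabi construction.
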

\begin{proof} A direct calculation based on the expressions of
$dd^c_V f, dd^c_W f$ and $dd^c_M f$, see \eqref{eq:ddc}; we leave the details
to the reader.
\end{proof}

Lemma~\ref{lem:compatible} allows us to introduce subspaces of {\it
compatible} K\"ahler metrics $\cM^{\rm
comp}_{\Omega}(W)={\cM_{\Omega}(W)}^T\cap {C}_0^{\infty}(\Delta)$ and
$\cM^{\rm comp}_{\Omega}(M)={\cM_{\Omega}(M)}^G\cap {C}_0^{\infty}(\Delta)$
(within a fixed K\"ahler class $\Omega$) and identify each of them with the
space ${\cM_{\Omega}(V)}^{\T}$.  The correspondence which associates to any
${\tilde \omega}_W \in \cM^{\rm comp}_{\Omega}(W)$ (resp. ${\tilde \omega}
\in \cM^{\rm comp}_{\Omega}(M)$) the corresponding symplectic potential
$\tilde U \in \cS(\Delta)$\footnote{For a metric ${\tilde \omega}_V=
\omega_V^0 + dd^c_V f \in \cM^{\T}(V)$ the corresponding symplectic
potential $\tilde U$ is linked to $f$ by a Legendre
transform~\cite{Abreu1,Guillemin}; this is true fibrewise for metrics in
$\cM^{\rm comp}_{\Omega}(W)$ and $\cM^{\rm comp}_{\Omega}(M)$.} allows us
to reformulate our existence problem on the space $\cM^{\rm
comp}_{\Omega}(W)$ as follows: for any $\lambda$ sufficiently close to
$\lambda_0$ (so that $A_{\lambda}$, $B_{\lambda}$ are well-defined and
$\langle p_a, z \rangle + c_a >0$ on $\Delta$), we consider the family of
differential operators on ${\cM_{\Omega}(W)}^T$
\begin{equation}\label{Q-operator}
\begin{split}
Q_{\lambda}({\tilde \omega}_W) & =  \frac{p_{\lambda}(\tilde z)}{p_{\lambda_0}(\tilde z)}\Big[\langle A_{\lambda}, \tilde z \rangle + B_{\lambda} +
\sum_{j=1}^N \frac{Scal_j}{c_j + \langle p_j, \tilde z \rangle} \\
&\quad + {\widetilde {Scal}}_W - \frac{1}{p_{\lambda}(\tilde z)} \sum_{r,s} \Big(\Big(\frac{\partial^2
p_{\lambda}}{\partial z_r \partial z_s}\Big)(\tilde z) {\tilde g}_W(K_r,K_s)\Big) \\
&\quad + \frac{2}{p_{\lambda}(\tilde z)} \sum_{r} \Big(\Big(\frac{\partial p_{\lambda}}{\partial
z_r}\Big)(\tilde z){\widetilde \Delta}_W ({\tilde z}_r)\Big)\Big],
\end{split}
\end{equation}
where $\tilde z= z + d^c_W f(K)$ is the momentum map of $\T$ with respect to
the K\"ahler form ${\tilde \omega}_W= {\omega}_W^0 + dd^c_W f$ of the K\"ahler
metric ${\tilde g}_W$, and ${\widetilde {Scal}}_W$ (resp. ${\widetilde
{\Delta}}_W$) denote the scalar curvature (resp. laplacian) of $\tilde g_W$.
Thus, by Lemmas~\ref{geometric} and \ref{lem:compatible}, {\it any K\"ahler
metric ${\tilde \omega}_{W} \in \cM^{\rm comp}_{\Omega}(W)$ for which
$Q_{\lambda}(\tilde \omega_W)=0$ gives rise to a symplectic potential $\tilde
U \in \cS(\Delta) $ solving $P_{\lambda}(\tilde U)=0$.}

The positive factor $\frac{p_{\lambda}(\tilde z)}{p_{\lambda_0}(\tilde z)}$ in
front of $Q_{\lambda}$ is introduced so that {\it for any compatible metric
$\tilde \omega_W \in \cM^{\rm comp}_{\Omega}(W)$, the function $S_{\tilde z}
(Q_{\lambda}(\tilde \omega_{W}))$ is $L^2$-orthogonal with respect to the
measure $d\mu_0= p_{\lambda_0}p_0dv$ on $\Delta$ to the space of affine
functions on ${\mathfrak t}^*$,} where, we recall, $S_{\tilde z}$ denotes the
identification of $\T$-invariant smooth functions on $W$ which are constant on
the inverse images of $\tilde z$ (equivalently of $z$) with pullbacks via
$\tilde z$ of smooth functions on $\Delta$. Indeed, by Lemma~\ref{geometric},
$p_{\lambda_0}(\tilde z) p_0(\tilde z)Q_{\lambda}(\tilde \omega_{W}) =
P_{\lambda}(\tilde U)p_{\lambda}(\tilde z) p_{0}(\tilde z)$, so integrating by
parts the r.h.s. of \eqref{P-lambda} and using \eqref{eq:toricboundary} we get
\begin{equation*}
\begin{split}
\int_{\Delta} P_{\lambda}(U) f(z) p_{\lambda}(z) p_{0}(z) dv
&= - \int_{\Delta} \langle {\bf H},  {\rm Hess}(f)\rangle p_{\lambda}(z) p_0(z) dv \\ 
&\qquad + \int_{\Delta}\Big(\langle A, z\rangle + B + \sum_{j =1}^N \frac{Scal_j}{c_j + \langle p_j,z \rangle} \Big) f(z)p_{\lambda}(z)p_0(z) dv \\
&\qquad+2 \int_{\partial \Delta} f(z) p_{\lambda}(z)p_0(z) d\sigma ,
\end{split}
\end{equation*} 
which holds for any smooth function $f(z)$. When $f$ is affine, the first term
in the r.h.s is clearly zero, while by the definition \eqref{extremal-field}
of $A_{\lambda}$ and $B_{\lambda}$ the sum of the two other terms is zero too;
our claim then follows by Lemma~\ref{lem:compatible} and the expression
\eqref{Vol} for the volume form of the compatible metric $\tilde \omega_W$.

%This follows easily for $\lambda= \lambda_0$ because $Q_{\lambda_0}(\tilde \omega_W)= Scal^{\perp}_{\tilde g}$ where  $(\tilde g, \tilde \omega)$ is the corresponding K\"ahler metric in  $\cM_\Omega^{\rm comp}(M)$,  and $Scal^{\perp}_{\tilde g}$ denotes the $L^2$-orthogonal projection with respect to $\tilde \omega^m$ of its scalar curvature to the space of functions orthogonal to the Killing potentials of $\tilde g$ (see Sect.~\ref{extremal-field}).

\smallskip
Let $\Pi_{0}$ denote the orthogonal $L^2$-projection of
$C^{\infty}(\Delta)$ to the finite dimensional subspace of affine functions
of ${\mathfrak t}^*$ with respect to the measure $d\mu_0=
p_{\lambda_0}p_0dv$ on $\Delta$, and ${C}_{\perp}^{\infty}(\Delta)$ be the
kernel of $\Pi_0$. We then consider the map $\Psi\colon {\mathcal U} \to
\R^{(2+\ell)|\cA|} \times C_{\perp}^{\infty}(\Delta),$ defined in a
small neighbourhood $\mathcal U$ of $(\lambda_0, 0) \in
\R^{(2+\ell)|\cA|} \times C^{\infty}_{\perp}(\Delta)$ by
\[
\Psi(\lambda, f) = \Big(\lambda, ({\rm Id} - \Pi_0)(S_z(Q_{\lambda}(\tilde
\omega_W)) \Big),
\]
where $\tilde \omega_W= \omega_W^0 + dd^c_W f$ is a compatible metric on
$\cM^{\rm comp}_{\Omega}(W)$. Note that if $f$ has sufficiently small
$C^1$-norm, the equation $({\rm Id} - \Pi_0)\circ (S_z(Q_{\lambda}(\tilde
\omega_W))=0$ is satisfied if and only if $Q_{\lambda}(\tilde \omega_W)=0$:
this follows from the fact that $\Pi_0 \circ S_{\tilde z} \circ S_{z}^{-1}$ defines a continuous family of linear endomorphisms of the finite
dimensional space of affine functions on ${\mathfrak t }^*$, with the identity
corresponding to $\tilde \omega_W= \omega_W^0$; thus $\Pi_0 \circ S_{\tilde z}
\circ S_{z}^{-1} \circ \Pi_0$ is invertible for $\tilde \omega_W$ close to
$\omega_W^0$, and hence (by using that $\Pi_0(S_{\tilde z} (Q(\tilde
\omega_W))=0$) we get
\begin{equation*}
\begin{split}
\Pi_0 \circ S_{\tilde z} \circ S_{z}^{-1} \circ ({\rm Id} - \Pi_0)\Big(S_z
(Q_{\lambda}(\tilde \omega_W)\Big) = - \Pi_0 \circ S_{\tilde z} \circ
S_{z}^{-1}\circ \Pi_0 \Big(S_z (Q_{\lambda}(\tilde \omega_W))\Big)
\end{split}
\end{equation*}
which is zero iff $S_z(Q_{\lambda}(\tilde \omega_W)=0$
i.e. $Q_{\lambda}(\tilde \omega_W)=0$.

\smallskip
By the discussion above, we are in position to complete the proof of
Proposition~\ref{perturbation} by applying the inverse function theorem to the
extension of $\Psi$ to suitable Sobolev spaces, together with elliptic
regularity (as in \cite{Le-Sim1}, see also the proof of Lemma~\ref{stability})
in order to find a family $\tilde \omega_W^\lambda= \omega_W^0 + dd^c_W
f_{\lambda}$ of smooth compatible metrics satisfying $\Psi(\lambda,
f_\lambda)= (\lambda, 0)$ for $|\lambda-\lambda_0|<\varepsilon$.

Let us first introduce the functional spaces we will work on. Recall that
${C}^{\infty}(\Delta)$ is seen as a (closed) Fr\'echet subspace of the space
of $T$-invariant smooth functions on $W$ (resp. $G$-invariant smooth functions
on $M$) which are constant on the inverse images of the momentum map $z$ for the sub-torus $\T$.  It
follows from the description of the Lie algebra of $T$ (resp. $G$) given in
Lemma~\ref{isometry algebra} that $C_{\perp}^{\infty}(\Delta)$ is
precisely the intersection of ${C}^{\infty}(\Delta)$ with the space
${C_{\perp}^{\infty}(W)}^T$ of $T$-invariant smooth functions on $W$ which
are $L^2$-orthogonal with respect to $p_{\lambda_0}{\rm Vol}_{\omega_W^0}$ to
Killing potentials of $g_W^0$ (resp. the space ${C_{\perp}^{\infty}(M)}^G$
of $G$-invariant smooth functions on $M$ which are $L^2$-orthogonal with
respect to ${\rm Vol}_{\omega_0}$ to Killing potentials of $g_0$). We let
$ L_{\perp}^{2,k}(W,\Delta)$ (resp. $L_{\perp}^{2,k}(M, \Delta)$) be the
closure of $C_{\perp}^{\infty}(\Delta)$ with respect to the Sobolev norm
$||\cdot ||_2^k$ on $W$ for the measure $p_{\lambda_0}(z) {\rm
Vol}_{\omega_W^0}$ and riemannian metric $g^0_W$  (resp. the Sobolev norm $||\cdot ||_2^k$ on $M$ with
respect to ${\rm Vol}_{\omega_0}$ and $g_0$).  For $k \gg 1$, the Sobolev embedding
$L_{\perp}^{2,k+4}(W, \Delta) \subset C_{\perp}^{3}(\Delta)$ allows us to
extend the differential operator $\Psi$ to a $C^1$-map from a neighbourhood of
$(\lambda_0,0) \in \R^{(2+\ell)|\cA|} \times 
L_{\perp}^{2,k+4}(W,\Delta)$ into $L_{\perp}^{2,k}(W, \Delta)$, such that $\Psi
(\lambda_0, 0)=0$; furthermore, as the principal part of $Q_{\lambda}$ is
concentrated in the term ${\widetilde {Scal}}_W$, one can see that $\Psi$ is a
fourth-order quasi-elliptic operator~\cite{Le-Sim1}.

Now, in order to apply the inverse function theorem, it is enough to establish
the following
\begin{lemma}
Let $ T_0\colon C_{\perp}^{\infty}(\Delta) \to C_{\perp}^{\infty}(\Delta)$
be the linearization at $\omega_W^0\in \cM^{\rm comp}_{\Omega}(W)$ of
$Q_{\lambda_0}$. Then $T_0$ is an isomorphism of Fr\'echet spaces.
\end{lemma}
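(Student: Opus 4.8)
The plan is to recognise $T_0$ as $(-2)$ times the Lichnerowicz operator of the extremal compatible metric, and then to run the self-adjoint Fredholm argument already used in Lemma~\ref{stability}. First I would record that, since $(g_0,\omega_0)$ is extremal, its reduced scalar curvature vanishes, so $Q_{\lambda_0}(\omega_W^0)=0$, consistently with $P_{\lambda_0}(U_0)=\Scal_{g_0}^{\perp}=0$. By Lemmas~\ref{geometric} and~\ref{lem:compatible}, the map $f\mapsto Q_{\lambda_0}(\omega_W^0+dd^c_W f)$ is exactly the reduced scalar curvature $\Scal^{\perp}$ of the compatible metric $g_0+dd^c_M f$ on $M$, restricted to the $\T$-basic directions $f\in C^{\infty}(\Delta)$. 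Differentiating at $f=0$ by means of the variational formula \eqref{variation}, precisely as in the computation of $T_{(0,0)}\Psi$ in the proof of Lemma~\ref{stability}, the extremal correction $\ip{A_{\lambda_0},\tilde z}+B_{\lambda_0}$ and the orthogonalisation built into $Q_{\lambda_0}$ together absorb the first-order term carried by $d\Scal_{g_0}$, and one is left with
\[
T_0 f = -2\,\delta\delta\big((Ddf)^{-}\big),\qquad f\in C_{\perp}^{\infty}(\Delta),
\]
that is, $T_0$ is $(-2)$ times the real Lichnerowicz operator $L$ of $g_0$, acting on $\T$-basic functions.

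Next I would record the structural properties of $T_0$. As the restriction to $\T$-basic functions of the Lichnerowicz operator of $g_0$, it is a fourth-order operator, formally self-adjoint with respect to the measure $d\mu_0=p_{\lambda_0}p_0\,dv$ on $\Delta$ --- equivalently with respect to $p_{\lambda_0}\,\mathrm{Vol}_{\omega_W^0}$ on $W$, this being the $L^2$ structure on $\T$-basic functions singled out by \eqref{Vol}. Its leading symbol is that of the bi-Laplacian, so $T_0$ is (quasi-)elliptic of order four, the degeneracy at the blow-down faces $F_b$, $b\in\cB$, being handled by the weighted Sobolev framework of \cite{Le-Sim1} exactly as for the operator $\Psi$. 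Since any affine function of $z$ is a Killing potential and hence annihilated by $L$, and $L$ is self-adjoint, $T_0$ preserves the $L^2(d\mu_0)$-orthogonal complement of the affine functions; thus $T_0$ maps $C_{\perp}^{\infty}(\Delta)$ into itself.

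The crucial step is the triviality of the kernel. If $f\in C_{\perp}^{\infty}(\Delta)$ satisfies $T_0 f=0$, then
\[
0=\int_\Delta f\,(T_0f)\,d\mu_0=-2\int_\Delta\big|(Ddf)^{-}\big|^2\,d\mu_0,
\]
whence $(Ddf)^{-}=0$, so that $X:=\mathrm{grad}_{\omega}f$ is a hamiltonian Killing field of $g_0$. Since $f$ is $\T$-basic, $X=\sum_r(\del f/\del z_r)\,K_r$ commutes with $\mathfrak t$, so $X$ lies in the centraliser of $\mathfrak t$ in $\mathfrak{i}_0(M,g_0)$. By the maximality of $\T$ established in Proposition~\ref{max-torus} (the identical step appears in the proof of Lemma~\ref{stability}), $X\in\mathfrak t$, i.e. $f\in\mathfrak t_{\omega}$ is an affine function of $z$. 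As $f$ is also $L^2(d\mu_0)$-orthogonal to the affine functions, $f=0$.

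Finally I would promote this to an isomorphism of Fréchet spaces by the standard elliptic route of Lemma~\ref{stability}. Extending $T_0$ to the Sobolev completions $L_{\perp}^{2,k+4}(W,\Delta)\to L_{\perp}^{2,k}(W,\Delta)$, quasi-ellipticity makes it Fredholm, and formal self-adjointness with respect to $d\mu_0$ forces index zero with $\mathrm{coker}\,T_0\cong\ker T_0$; since $\ker T_0=0$, the operator is bijective on these Sobolev spaces, and elliptic regularity then shows that it is an isomorphism of $C_{\perp}^{\infty}(\Delta)$. I expect the main obstacle to be the first step, namely confirming that all the auxiliary, $\lambda$-dependent and $\tilde z$-bookkeeping terms of $Q_{\lambda_0}$ conspire, at the extremal metric, to reproduce exactly the self-adjoint Lichnerowicz operator, together with checking that self-adjointness and the Fredholm property genuinely survive the weight $d\mu_0$ and the blow-down degeneracies; once $T_0$ is identified, the kernel computation is immediate from the maximality of $\T$, just as in Lemma~\ref{stability}.
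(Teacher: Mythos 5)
Your identification of $T_0$ with $-2$ times the Lichnerowicz operator $L$ of $(g_0,\omega_0)$ acting on pullbacks of functions on $\Delta$ is exactly what the paper does, and your kernel argument (integrate against $f$, get $(Ddf)^-=0$, use the centralizer description of Lemma~\ref{isometry algebra} to force $\mathrm{grad}_\omega f\in\mathfrak t$, then orthogonality to affine functions to force $f=0$) is essentially the paper's injectivity step, modulo the fact that it is $G$, not $\T$, that is maximal; since $f$ is a function of $z$ only, the conclusion still follows from Lemma~\ref{isometry algebra}.

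The gap is in surjectivity, which is precisely where the paper puts all of its effort (it says explicitly that ``the only missing piece is the surjectivity''). Your appeal to ``formal self-adjointness forces index zero with $\mathrm{coker}\,T_0\cong\ker T_0$'' is not justified here, because $T_0$ is not an elliptic operator on a compact manifold but the \emph{restriction} of the Lichnerowicz operator of $M$ to the proper closed subspace $C^\infty_\perp(\Delta)\subset {C^\infty_\perp(M)}^G$ of functions constant on the fibres of $z$. For such a restriction, injectivity and closed range descend from the ambient operator, but the cokernel does not: a class in $L^{2,k}_\perp(W,\Delta)/T_0\bigl(L^{2,k+4}_\perp(W,\Delta)\bigr)$ is represented by some $u=u(z)$ with $\int_M L(\phi)\,u\,\omega_0^m=0$ for all $\phi\in C^\infty_\perp(\Delta)$, and to derive a contradiction one must show that this forces $\int_M L(f)\,u\,\omega_0^m=0$ for \emph{all} $f\in {C^\infty_\perp(M)}^G$ (only then does surjectivity of $L$ on the larger space give $u=0$). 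Equivalently, one would need to know that $L^{-1}$ preserves the subspace of functions of $z$, which amounts to the $L^2$-projection onto that subspace (fibrewise averaging over $\hat S$) preserving Sobolev regularity up to the degenerate locus --- not something you can get from the symbol of $T_0$. The paper resolves this by an explicit decomposition of $L(f)$, for general $G$-invariant $f$, into $L(f_s)$, $L_{\hat S,z}(f_z)$, mixed Laplacian terms and the correction terms $R_j(z)\Delta_{g_j}(f_z)$, and a term-by-term verification that each piece integrates to zero against $u(z)\,p(z)\,dv$ using self-adjointness of the fibrewise operators. Your proposal needs this computation (or an equivalent substitute) to be complete.
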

\begin{proof}  Let $(g_0, J_0, \omega_0)$ be the compatible extremal K\"ahler
metric on $M$ corresponding to the initial value $\lambda=\lambda_0$.  For any
function $f\in C_{\perp}^{\infty}(\Delta)$ we consider the compatible
K\"ahler metric $\tilde g$ on $M$, with K\"ahler form $\tilde \omega= \omega_0
+ dd^c_M f$ and the compatible K\"ahler metric $\tilde g_W$ on $W$ with
K\"ahler form $\tilde \omega_W = \omega_W^0 + dd^c_W f$. We saw already in
Sect.~\ref{extremal vector field} that for $\lambda= \lambda_0$,
$Q_{\lambda_0}(\tilde \omega_W) = P_{\lambda_0}({\tilde U})= Scal_{\tilde g}^{\perp}$, where $\tilde U$ and
$Scal_{\tilde g}^{\perp}$ are the symplectic potential and normalized scalar curvature of $\tilde g$. It
then follows from \cite{gauduchon-book,Le-Sim} that the linearization $T_0$ of
$Q_{\lambda_0}$ (at $\omega_W^0$) is equal to $-2$ times the Lichnerowicz
operator $L$ of $(g_0, \omega_0)$ acting on the space of pullbacks (via $z$)
of functions in $C_{\perp}^{\infty}(\Delta)$. We have already observed in
the proof of Lemma~\ref{stability} that $L$ is an isomorphism when restricted
to the space ${C_{\perp}^{\infty}(M)}^G$ of $G$-invariant smooth functions
$L^2$-orthogonal to Killing potentials of $g_0$.  The main point here is to
refine this by showing that $L$ is an isomorphism when restricted to subspace
$C_{\perp}^{\infty}(\Delta)$, the only missing piece being the
surjectivity.

Suppose for a contradiction that $ L\colon C_{\perp}^{\infty}(\Delta) \to
C_{\perp}^{\infty}(\Delta)$ is not surjective. Considering the extension
of ${L}$ to an operator between the Sobolev spaces $L_{\perp}^{2,4}(M,\Delta) \to L_{\perp}^{2}(M,\Delta)$ (by elliptic theory $L$ is
a closed operator), our assumption is then equivalent to the existence of a
non-zero function $u \in L_{\perp}^{2}(M, \Delta)$ such that, for any
$\phi \in C_{\perp}^{\infty}(\Delta)$, $L(\phi)$ is $L^2$ orthogonal to
$u$. As any sequence of functions converging in $L^2(M)$ has a point-wise
converging subsequence, $u=u(z)$ is (the pullback to $M$ of) a
$L^{2}$-function on $\Delta$, and using \eqref{Vol} we have
\begin{equation}\label{defining}
\int_M L(\phi) u \omega_0^m= \int_{\Delta^0} L(\phi) u(z)p(z) dv = 0.
\end{equation}
We claim that
\eqref{defining} implies
\begin{equation}\label{consequence} 
\int_M L(f) u \ \omega_0^m =0
\end{equation}
for any $f \in {C_{\perp}^{\infty}(M)}^G$. This would be a contradiction
because $L$ extends to an isomorphism between the closures ${L_{\perp}^{2,4}(M)}^G$ and ${L_{\perp}^{2}(M)}^G$ of ${C_{\perp}^{\infty}(M)}^G$
in the corresponding Sobolev spaces on $M$.

It is enough to establish \eqref{consequence} by integrating on $M^0= z^{-1}(\Delta^0)$
(which is the complement of the union of submanifolds of real codimension at
least $2$).

The Lichnerowicz operator $L$ has the following general equivalent
expression~\cite{gauduchon-book,Le-Sim}
\begin{equation}\label{lichne}
{L}(f)  =\frac{1}{2}\Delta_{g_0}^2 f + g_0(dd^c f, \rho_{g_0})
+ \frac{1}{2}g_0(df, dScal_{g_0}),
\end{equation}
where $\rho_{g_0}$ is the Ricci form of $(g_0,J_0)$ and $\Delta_{g_0}$ is its
laplacian. We will use the specific form \eqref{M} of $g_0$ to express the
r.h.s of the above equality in terms of the geometry of $(V, g_V^0)$ and
$(\hat S, g_{\hat S})$.

Let $f$ be any $G$-invariant (and hence $\T$-invariant) smooth function on
$M$. It can be written on $M^0$ as a smooth function depending on $z$ and
$\hat S$ and, for any $s \in \hat S$, we will denote by $f_s(z) = f(z,s)$ the
corresponding smooth function of $z$ (Note that, as the pullback of $f$ to
$\hat M$ is smooth, $f_s(z)$ is a smooth function on $\Delta$, not only on
$\Delta^0$.) Similarly, for any $z\in \Delta^0$, $f_z(s)=f(z,s)$ stands for
the corresponding smooth function on $\hat S$.

Using \cite[Prop.~7]{hfkg2} and the specific form \eqref{M} of $g_0$, it is
straightforward to check that on $M^0$ we have
\begin{equation*}
\begin{split}
dd^c f = & \sum_{k,r,t=1}^{\ell} \frac{\partial }{\partial z_k}\Big(\frac{\partial f}{\partial z_t} H_{rt}\Big) dz_k \wedge \hat \theta_r + \sum_{j=1}^N \Big(\sum_{r,t=1}^{\ell}\frac{\partial f}{\partial z_t}H_{rt}p_{jr}\Big)\omega_j \\
& + \sum_{r=1}^{\ell} \Big(d_{\hat S} \Big( \sum_{s=1}^{\ell} \frac{\partial f}{\partial z_t} H_{rt} \Big) \wedge \hat \theta_r + d^c_{\hat S}\Big(\sum_{t=1}^{\ell} \frac{\partial f}{\partial z_t} H_{rt} \Big)  \wedge J \hat \theta_r  \Big) \\
& +  d_{\hat S} d^c_{\hat S} f_z  \\
\Delta_{g_0}  f  = & \Delta_{\hat S, z} f_z  + \Delta_{g_0} f_s ; \\
\rho_{g_0} = & \sum_{j=1}^N \rho_j  - \sum_{k,r,t=1}^{\ell} \frac{\partial}{\partial z_k} \Big( \frac{1}{2p(z)}  \frac{\partial (p(z)H_{tr})}{\partial z_t}\Big) dz_k \wedge \hat \theta_r \\ & \ \ -  \frac{1}{2p(z)} \sum_{j=1}^N\Big(\sum_{r,t=1}^{\ell} \frac{\partial (p(z)H_{rt})}{\partial z_t}p_{jr}\Big)\omega_j,\\
Scal_{g_0} =&  Scal_{\hat S, z} -\frac 1{p(z)}\sum_{r,t=1}^{\ell}
\frac{\del^2}{\del z_r\del z_t} (p(z) H_{rt}),\\
\end{split}
\end{equation*}
where 
\begin{bulletlist}
\item $p(z)=\prod_{j=1}^N (\ip{p_j,z}+c_j)^{d_j}$;
\item $\hat \theta = (\hat \theta_1, \ldots, \hat \theta_{\ell})$ and $p_j =
(p_{j1}, \ldots, p_{j\ell})$ with respect to the chosen basis of ${\mathfrak
t}$;
\item $d_{\hat S}$ and $d^c_{\hat S}$ are the differential and the
$d^c$-operator acting on functions and forms on $\hat S$;
\item $(g_j,\omega_j)$ are the product CSC K\"ahler factors of the K\"ahler
metric $(g_{\hat S}, \omega_{\hat S})$, with respective Ricci forms $\rho_j$
and laplacians $\Delta_{g_j}$;
\item $g_{\hat S, z} = \sum_{j=1}^N (\langle p_j, z) + c_j) g_j$ is the
quotient K\"ahler metric on $\hat S$ at $z$, and $\omega_{\hat S, z}$,
$Scal_{\hat S, z}$ and $\Delta_{\hat S, z}$ denote its K\"ahler form, scalar
curvature and laplacian, respectively;
\end{bulletlist}
Substituting back in \eqref{lichne}, we obtain
\begin{equation*}
\begin{split}
{L} (f) =  & {L} (f_s) + {L}_{\hat S,z} (f_z) + \Delta_{\hat S, z}\big((\Delta_{g_0} f_s)_z\big) + \Delta_{g_0} \big( (\Delta_{\hat S, z} f_z)_s\big)  \\ 
& + \sum_{j=1}^N R_j(z)\Delta_{g_j} (f_z), 
\end{split}
\end{equation*}
where $L_{\hat S, z}$ is the Lichnerowicz operator of $g_{\hat S, z}$, and
$R_j(z)$ are coefficients (that can be found explicitly from the above
formulae) depending only on $z$, and such that $p(z)R_j(z)$ are smooth on
$\Delta$.

If we integrate the above expression for $L(f)$ against $u(z)$ (by using
\eqref{Vol}) we get that $\int_M L(f) u \ \omega_0^m$ is a non-zero constant
multiple of
\begin{equation*}
\begin{split}
&  \int_{\hat S} \Big(\int_{\Delta^0} {L}(f_s) u(z)p(z)dv\Big) \omega_{\hat S}^{d}
+ \int_{\hat S} \Big( \int_{\Delta^0}  \Delta_{g_0}\big((\Delta_{\hat S, z} f_z)_s\big) u(z) p(z) dv \Big) \omega_{\hat S}^d  \\
&  + \int_{\Delta^0} \Big( \int_{\hat S} {L}_{\hat S, z} (f_z)
\omega_{\hat S,z}^{d}\Big)u(z) dv
+ \int_{\Delta^0} \Big( \int_{\hat S} {\Delta}_{\hat S, z} \big((\Delta_{g_0} f_s)_z\big) \omega_{\hat S,z}^{d}\Big)u(z)dv \\
& +  \sum_{j=1}^N  \int_{\Delta^0}\Big(\int_{\hat S} {\Delta}_{g_j} (f_z)  \ \omega_{\hat S}^{d}\Big) p(z)R_j(z)u(z) dv.
\end{split}
\end{equation*} 
To see that all the terms vanish, note that the first term is zero by
\eqref{defining}; the third and fourth terms are zero because ${L}_{\hat S, z}$
and $\Delta_{\hat S, z}$ are self-adjoint (with respect to $\omega_{\hat S,
z}$) and therefore their images are $L^2$-orthogonal to constants on $\hat
S$. The fifth term is also zero because $\Delta_{g_j} (f)$ is $L^2$-orthogonal
to constants on $\hat S$ with respect to $\omega_{\hat S}$: this follows
easily from the local product structure of $g_{\hat S}$. For the second term
one uses that $\Delta_{g_0}$ defines a self-adjoint operator on
$C^{\infty}(\Delta)$ with respect to the measure $p(z)dv$: thus, for any
smooth function $\phi(z)$ on $\Delta$,
\[
\int_{\hat S} \Big( \int_{\Delta^0} \Delta_{g_0}\big( (\Delta_{\hat S, z}
f_z)_s\big) \phi(z)p(z) dv \Big) \omega_{\hat S}^d =\int_{\Delta^0}
\Big(\int_{\hat S} \Delta_{\hat S, z} (f_z) \omega_{\hat S, z}^d\Big)
(\Delta_{g_0}\phi) dv=0
\]
because $\Delta_{\hat S, z} f_z$ is $L^2$-orthogonal to constants on $\hat S$;
as $u$ is in the closure in $L^2$ of pullbacks of smooth functions on
$\Delta$, the second term vanishes too.

This concludes the proof of the lemma.
\end{proof}

An immediate consequence of Proposition~\ref{perturbation} is the following
\begin{cor}\label{corollary1} The existence of a compatible extremal K\"ahler metric is an open
condition on the set of admissible K\"ahler classes on $M$. 
\end{cor}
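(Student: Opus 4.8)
The plan is to deduce the corollary directly from Proposition~\ref{perturbation}, by specialising the abstract parameter $\lambda$ to those values that correspond to genuine compatible K\"ahler classes on the fixed manifold $M$. First I would recall from Sect.~\ref{s:generalised-calabi} that once the CSC factors $(g_j,\omega_j)$, the polytope $\Delta$, the connection $\hat\theta$ and the normals $p_j$ are fixed by the complex geometry of $M$, the only data left free are the real constants $c_a$, $a\in\cA$, and that these precisely parametrise the admissible (that is, compatible) K\"ahler classes on $M$. In the notation of Proposition~\ref{perturbation} this means that the \emph{admissible} values of $\lambda=\{(c_a,p_a,Scal_a):a\in\cA\}$ sweep out the affine slice $\{p_a=p_a^0,\ Scal_a=Scal_a^0\}$, on which only $c_a$ varies, cut out inside the open region where $\langle p_a,z\rangle+c_a>0$ on $\Delta$.

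Next I would use that, by the discussion of Sect.~\ref{extremal vector field} together with the extremal equation~\eqref{operator}, an admissible class $\Omega_\lambda$ carries a compatible extremal K\"ahler metric if and only if $P_\lambda(U)=0$ admits a solution $U\in\cS(\Delta)$. Suppose $\Omega_{\lambda_0}$ is such a class, with compatible extremal metric $(g_0,\omega_0)$ and symplectic potential $U_0$, so that $P_{\lambda_0}(U_0)=0$. Proposition~\ref{perturbation} then furnishes an $\varepsilon>0$ and, for every $\lambda$ with $|\lambda-\lambda_0|<\varepsilon$, a symplectic potential $U_\lambda\in\cS(\Delta)$ with $P_\lambda(U_\lambda)=0$; by construction such a $U_\lambda$ defines, through the generalized Calabi construction~\eqref{M}, a compatible extremal K\"ahler metric in the class $\Omega_\lambda$.

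Finally, I would restrict this conclusion to the admissible slice. Although Proposition~\ref{perturbation} permits $\lambda$ to move in all $(2+\ell)|\cA|$ directions --- including directions violating the integrality constraints, which therefore correspond to no smooth compatible class --- what matters for openness is only the $|\cA|$-dimensional family obtained by holding $p_a=p_a^0$ and $Scal_a=Scal_a^0$ while letting $c_a$ vary near $c_a^0$. This family lies inside the $\varepsilon$-ball of the proposition, each of its members is admissible, and for each the potential $U_\lambda$ yields a compatible extremal K\"ahler metric in $\Omega_\lambda$. As $\{c_a\}$ then ranges over a full neighbourhood of $\{c_a^0\}$ in the set of admissible classes, the asserted openness follows. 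The substantive content is entirely carried by Proposition~\ref{perturbation}; the only point to note is that the perturbation can be taken within the admissible slice, so no genuine additional obstacle arises here.
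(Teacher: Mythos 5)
Your proposal is correct and follows exactly the paper's own argument: the admissible classes are parametrised by the constants $c_a$, and Proposition~\ref{perturbation} is applied along the slice $\lambda=(c_a,p_a^0,Scal_a^0)$. The additional remarks about why the non-admissible directions of $\lambda$ are irrelevant are accurate but not needed beyond what the paper states.
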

\begin{proof}  As we have already observed, the admissible K\"ahler classes
are parametrized by the real constants $c_a$ for $a \in \cA$.  We thus apply
Proposition~\ref{perturbation} by taking $\lambda = (c_a, p_a^0,
Scal_a^0)$.
\end{proof}

\subsection{Proof of Theorem \ref{th:small-classes}}\label{s:existence}

To deduce Theorem~\ref{th:small-classes} from
  Proposition~\ref{perturbation}, 
we observe that
the differential operators \eqref{P-lambda} satisfy $P_{t \lambda } =
P_{\lambda}$ for any real number $t \neq 0$.

On any K\"ahler manifold $(M, g, \omega)$ obtained by the generalized Calabi
construction with data $\lambda=(c_a, p_a, Scal_a)$, we can consider the
sequence of differential operators $P_{\lambda_k}$ where $\lambda_k =
(c_a+k, p_a, Scal_a)$. The differential
operator $P_{\lambda_k}$ is the same as $P_{\frac{\lambda_k}{k}}$ and
$\frac{\lambda_k} {k}$ converges when $k\to \infty$ to the data corresponding
to the extremal K\"ahler metric equation for a compatible K\"ahler metrics on
$W$.  We then readily infer Theorem \ref{th:small-classes} from 
Proposition~\ref{perturbation}.
\begin{rem}\label{trivial-corollary}
As any invariant K\"ahler metric on a toric manifold is compatible,  Theorem~\ref{th:small-classes}  implies the existence of (compatible) extremal metrics on a rigid semisimple toric bundles $M$ over a CSC locally product K\"ahler manifold,  in the case  when there are no blow-downs and $W=V$ is a toric
extremal K\"ahler manifold.
\end{rem}

\begin{rem}\label{multiplicity-free} An interesting class of rigid toric
bundles comes from the theory of multiplicity-free manifolds recently
discussed in \cite{Do6}.  A typical example is obtained by taking a compact
connected semisimple Lee group $G$ and a maximal torus $\T \subset G$ with
Lie algebra ${\mathfrak t}$; if we pick a positive Weyl chamber ${\mathfrak
t}_+ \subset {\mathfrak t}$ (and identify ${\mathfrak t}$ with its dual
space ${\mathfrak t}^*$ via the Killing form), for any Delzant polytope
$\Delta$ contained in the interior of ${\mathfrak t}_+$, one can consider
the manifold $M = p: G \times_{\T} V \to S=G/\T$, where $V$ is the toric
manifold with Delzant polytope $\Delta$.  Note that $G$ has a structure of
principal $\T$-bundle over the flag manifold $S=G/\T$ with a connection
$1$-form $\theta \in \Omega^1(G, {\mathfrak t})$ whose curvature
$\omega(z)=\langle d\theta, z\rangle$ defines a family of symplectic forms
on $S$ (the Kirillov--Kostant--Souriau forms); identifying $S \cong G^c/B$,
where $B$ is a Borel subgroup of the complexification $G^c$ of $G$, each
$\omega(z)$ defines a homogeneous K\"ahler metric $g(z)$ on the complex
manifold $S$ (which is therefore of constant scalar curvature); the Ricci
form $\omega_S$ of $\omega(z)$ is independent of $z$, giving rise to the
normal (K\"ahler--Einstein) metric $g_S$ on $S$. Now, for any toric
K\"ahler metric on $V$, corresponding to a symplectic potential $U\in
{\mathcal S}(\Delta)$, one considers the K\"ahler metric on $M$
\[
g=p^*(g(z) +k g_S) + \langle dz, {\bf G}, dz \rangle + \langle \theta,
{\bf H}, \theta \rangle, \ \ \omega= p^* (\omega(z) + k \omega_S) + \langle
dz \wedge d\theta \rangle,
\]
where ${\bf G} = {\rm Hess}(U)$, ${\bf H} = {\bf G}^{-1}$, $z \in \Delta$
and $k>0$.  In this case, $G \to S=G/\T$ is not necessarily a {\it
diagonalizable} principal $\T$-bundle over $S=G/\T$ (in other words, $M= G
\times_{\T} V \to S=G/\T$ is a rigid but not in general semisimple toric
bundle). However, most parts of the discussion in Sect.~\ref{calabi-type}
do extend to this case too (see also \cite{hfkg1}), with some obvious
modifications.  The key points are that (a) the volume form of $g(z) +k
g_S$ is a multiple $p(z)$ (depending only on $z$) of ${\rm Vol}_{g_S}$:
this allows to extend the curvature computations (see
\cite[Prop.~7]{hfkg1}) and formula \eqref{Vol} to this case, (b) for any $z
\in \Delta$, $g(z) +k g_S$ is a CSC K\"ahler metric on $S$: this allows to
extend the results in Sect.~\ref{isometry}, and (c) there is a similar
formula to \eqref{scal} for the scalar curvature of $g$, found by
Raza~\cite{raza}, which allows to reduce the extremal equation for the
K\"ahler metrics in the above form to \eqref{operator} with $p_a$ being
essentially the positive roots of $G$, $c_a=k$ and $Scal_a$ positive
constants.  Proposition~\ref{perturbation} and its corollaries
(Corollary~\ref{corollary1} and Theorem~\ref{th:small-classes}) extend to
this setting too. We thus get both openness and existence of extremal
K\"ahler metrics of the above form when $V$ is an extremal toric K\"ahler
variety and $k\gg 0$.
\end{rem}

\section{Proof of Theorem~\ref{th:existence}}\label{sec:proof-existence} As
another application of Theorem~\ref{th:small-classes}, we derive
Theorem~\ref{th:existence} from the introduction. This is the case when $V=
\C P^{\ell}$ and $W= \C P^r, \ r \ge \ell\ge 1$ and $M = P(E_0 \oplus
\cdots \oplus E_{\ell}) \to S$ (see Sect.~\ref{s:projective bundles}). It
follows from the general theory of hamiltonian $2$-forms \cite{hfkg1,hfkg2}
that any Fubini--Study metric on $\C P^r$ admits a rigid semisimple
isometric action of an $\ell$-dimensional torus $\T$, for any $1 \le \ell
\le r$ (see in particular \cite[Prop.~17]{hfkg1} and \cite[Thm.~5]{hfkg2}):
thus, $W=\C P^{r}$ admits a compatible extremal K\"ahler metric.

Let $\omega$ be a compatible K\"ahler on $M$; as the fibre is $\C P^r$, by
re-scaling, we can assume without loss that $[\omega]= 2\pi c_1(\cO(1)_E) +
p^*\alpha$, where $\alpha$ is a cohomology class on $S$. The form \eqref{M} of
$\omega$ and the assumption on the first Chern classes $c_1(E_i)$ imply that
$\alpha$ is diagonal with respect to the product structure of $S$, in the
sense that it pulls back to the covering product space as $\alpha= \sum_{a \in
\cA} q_a [\omega_a]$ for some real constants $q_a$. Therefore, $\Omega_k =
2\pi c_1(\cO(1)_E) + k p^* [\omega_S] =[\omega] + \sum_{a\in \cA}
(k-q_a)p^*[\omega_a]$.  If we choose $q$ with $q>q_a$, then $\tilde \omega
= \omega + \sum_{a\in \cA} (q -q_a)p^*\omega_a$ is clearly a compatible
K\"ahler metric too. Thus, $\Omega_k = [\tilde \omega] + (k-q)p^*[\omega_S]$
with $[\tilde \omega]$ compatible, and we derive Theorem~\ref{th:existence}
from the introduction as a particular case of Theorem~\ref{th:small-classes}.

\section{Proof of Theorem~\ref{th:extremal}} \label{partial}

Suppose that $(g, \omega)$ is an extremal K\"ahler metric in $\Omega_k=2\pi
c_1(\cO(1)_E) + k p^* [\omega_{\Sigma}]$ on $(M,J)= P(E_0 \oplus \ldots \oplus
E_{\ell}) \to \Sigma$, where $E_i$ are indecomposable holomorphic vector
bundles over a compact curve $\Sigma$ of genus ${\bf g}\ge 2$.  We can assume
without loss that $\omega_{\Sigma}$ is the K\"ahler form of a constant
curvature metric on $\Sigma$ and, by virtue of Theorem~\ref{main}, that the
scalar curvature of $g$ is not constant. In particular, $\ell \ge 1$.

We have seen in Lemma~\ref{decompose} that the $\ell$-dimensional torus $\T$
acting by scalar multiplication on each $E_i$ is maximal in the reduced
automorphism group ${\widetilde {\rm Aut}}_0(M,J) \cong H^0(\Sigma,
PGL(E))$. By a well-known result of Calabi~\cite{cal-two} the identity
component of the group of K\"ahler isometries of an extremal K\"ahler metric
is a maximal compact subgroup of ${\rm Aut}_0(M,J)$, so we can assume without
loss that $(g, \omega)$ is $\T$-invariant.

By considering small stable deformations $E_i(t)$ and applying
Lemma~\ref{stability}, we can find a smooth family of extremal $\T$-invariant
K\"ahler metrics $(J_t, g_t, \omega_t)$, converging to $(J,\omega)$ in any
$C^{k}(M)$, such that $(M,J_t) \cong P(\bigoplus_{i=1}^{\ell} E_i(t)) $, and
$[\omega_t] = [\omega]$ in $H^2_{dR}(M)$. By the equivariant Moser lemma, we
can assume without loss that $\omega_t = \omega$.

It is not difficult to see that any K\"ahler class on $(M,J_t)$ (for $t \neq
0$) is {\it compatible}: this follows from the fact that the cohomology
$H^2(M) \cong H^{1,1}(M,J_t)$ is generated by any compatible K\"ahler class on
$(M,J_t)$ and the pullback $p^* [\omega_{\Sigma}]$.  By
Theorem~\ref{th:existence} and the uniqueness of the extremal K\"ahler metrics
up to automorphisms~\cite{CT}, for any $t \neq 0$ we can take $k \gg 0$ such
that the extremal K\"ahler metric $(g_t, \omega)$ on $(M,J_t)$ is compatible
with respect to the rigid semisimple action of the maximal torus
$\T$. Strictly speaking, Theorem~\ref{th:existence} produces a lower bound
$k_0$ for such $k$, depending on $J_t$. However, in our case $|\cA| =1$,
the simplex $\Delta$, the moment map $z$ and the metric on $\Sigma$ are fixed,
and the parameter $\lambda=(c, p, Scal_{\Sigma})$ defining the corresponding
extremal equation \eqref{operator} for a compatible metric on $(M,J_t,
[\omega])$ is independent of $t$: indeed, the constants $p \in \mathfrak t$
and $c\in \R$ are determined by the first Chern classes $c_1(E_i)$ and the
cohomology class $\Omega_k = [\omega] \in H_{dR}^2(M)$. Thus, the deformation
argument used in Sect.~\ref{s:existence} produces a lower bound $k_0$
independent of $t$, such that for any $k>k_0$ and $t \neq 0$, $(g_t, \omega)$
is an extremal K\"ahler metric in $\Omega_k$ with respect to which the maximal
torus $\T$ acts in a rigid and semisimple way.

Take a regular value $z_0$ of the momentum map $z$ associated to the
hamiltonian action of $\T$ on $(M,\omega)$ and consider the family of
K\"ahler quotient metrics $(\hat{g}_{t}, \hat{J}_t)$ on the symplectic
quotient $\hat S$. By identifying the symplectic quotient with the stable
quotient, we see that $(\hat S, \hat{J}_t) \cong
P(\vE_0(t))\times_{\Sigma}\cdots\times_{\Sigma} P(\vE_\ell(t))\to \Sigma$
(see Sect.~\ref{s:projective bundles}). As for $t\neq 0$ the action of $\T$
is rigid and semisimple and $g_t$ is compatible, the quotient K\"ahler
metric $(\hat{g}_t, \hat{J}_t)$ must be locally a product of CSC K\"ahler
metrics. By the de Rham decomposition theorem $\hat{g}_t$ must be a
locally-symmetric metric modelled on the hermitian-symmetric space $\C
P^{d_0} \times \cdots \times \C P^{d_{\ell}} \times {\mathbb H}$, where
$d_i +1 = {\rm rk}(E_i)$ (so that $\C P^{d_i}$ is a point if $d_i=1$) and
${\mathbb H}$ is the hyperbolic plane. By continuity, $(\hat{g}_0,
\hat{J}_0)$ is a locally-symmetric K\"ahler metric on $\hat S$ of the same
type. By the de Rham decomposition theorem and considering the form of the
covering transformations we obtain representations $\rho_i\colon \pi_1
(\Sigma) \to PU(d_i+1)$, and therefore $E_i$ must be stable by the standard
theory~\cite{NS}.

\smallskip

In the case when $\ell=1$, we can assume without loss by Theorem~\ref{main}
that $E$ is not polystable, and we can then use instead of
Theorem~\ref{th:existence} the stronger results~\cite[Thm.~1~\& 6]{ACGT} which
affirm that {\it any} extremal K\"ahler metric on $(M, J_t)$ (for $t \neq 0$)
must be compatible with respect to the natural $S^1$-action.

\section{Further observations}\label{discussion}

\subsection{Relative K-energy and the main conjecture}

Leaving aside the specific motivation of this paper to study projective
bundles over a curve, the theory of rigid semisimple toric bundles which we
reviewed in Sect.~\ref{calabi-type} extends the theory of extremal K\"ahler metrics on toric
manifolds~\cite{Do2,Do3,Do5,Sz,ZZ,ZZ2} to this more general context.

To recast the leading conjectures~\cite{Do2,Sz} in the toric case to this
setting, recall from \cite{Do2} that if we parametrize compatible K\"ahler
metrics $g$ by their symplectic potentials $U \in \cS(\Delta)$, then the
relative (Mabuchi--Guan--Simanca) K-energy $\cE^\Omega$ on this space
satisfies the functional equation
\begin{align*}
(d\cE^\Omega)_g(\dot U) &= \int_\Delta (\Scal_g^{\perp}) \dot U(z)
p(z) dv\\ &=\int_\Delta\biggl( \Bigl( \ip{A,z}+B+\sum_{j=1}^N
\frac{\Scal_j}{\ip{p_j,z}+c_j}\Bigr) p(z) - \frac{\del^2}{\del z_r\del z_s}
(p(z) H_{rs})\biggr) \dot U(z) dv\\ &= 2\int_{\del\Delta} \dot U(z) p(z)
d\sigma+ \int_\Delta\Bigl( \ip{A,z}+B+\sum_{j=1}^N
\frac{\Scal_j}{\ip{p_j,z}+c_j}\Bigr) \dot U(z) p(z) dv\\ &\quad-\int_\Delta
\ip{{\bf H}, \mathrm{Hess}\,\dot U(z)} p(z) dv,
\end{align*}
where we have used \eqref{operator} and integration by parts by taking into account  \eqref{eq:toricboundary}.   Following \cite{Do2,Sz,ZZ}, 
let us introduce the linear functional
\begin{equation}\label{F}
\cF^\Omega(f):=\int_{\del\Delta} f(z) p(z) d\sigma
+\frac12 \int_\Delta
\Bigl( \ip{A,z}+B+\sum_j \frac{\Scal_j}{\ip{p_j,z}+c_j}\Bigr)f(z) p(z) dv.
\end{equation}
The above calculation of $d\cE^{\Omega}_g$ shows that $\cF^{\Omega}(f)=0$ if $f$ is an affine function of $z$. Furthermore, using the fact that the derivative of $\log\det {\bf H}$ is $\trace {\bf H}^{-1}d{\bf H}$, we obtain the following generalization of Donaldson's formula for $\cE^\Omega$:
\begin{equation}\label{K-energy}
\cE^\Omega(U) = 2\cF^\Omega(U)
-\int_\Delta \Big(\log\det\mathrm{Hess}\, U(z)\Big)p(z) dv.
\end{equation}
(In case of doubt about the convergence of the integrals, one can introduce a
reference potential $U_c$ and a relative version $\cE^\Omega_{g_c}$ of
$\cE^\Omega$, but in fact, as Donaldson shows, the convexity of $U$ ensures
that the positive part of $\log\det\mathrm{Hess}\, U(z)$ is integrable, hence
$-\log\det\mathrm{Hess}\, U(z)$ has a well defined integral in
$(-\infty,\infty]$.)

According to~\cite{Do2,Sz}, the existence of a solution $U \in \cS(\Delta)$ to \eqref{operator} should be entirely governed by properties of the linear functional \eqref{F}:

\begin{conj}\label{con:2} Let $\Omega$ be a compatible class on $M$. Then the following conditions  should be equivalent:
\begin{enumerate}
\item $\Omega$ admits an extremal K\"ahler metric.
\item $\Omega$ admits a compatible extremal K\"ahler metric
(i.e.~\eqref{operator} has a solution in $\cS(\Delta)$).
\item $\cF^\Omega(f) \ge 0$ for any piecewise linear convex function $f$ on
$\Delta$, and is equal to zero if and only if $f$ is
affine.\footnote{Generalizing computations in \cite{Do2,Sz,ZZ}, one can
show that the value of $\cF^\Omega$ at a {\it rational} piecewise linear
convex function computes the relative Futaki invariant introduced in
\cite{Sz} of a `compatible' toric test configuration on $(M,\Omega)$; in
general, one might need positivity of $\cF^\Omega$ on a larger space of
convex functions~\cite[Conjecture~7.2.2.]{Do2} in order to solve
\eqref{operator} but in the case when $\ell=2$ and the base $\hat S$ is a
point Donaldson shows in \cite{Do2} that the space of piecewise linear
convex functions will do.}
\end{enumerate}
\end{conj}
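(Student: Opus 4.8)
The plan is to prove the three-way equivalence as two pieces of quite different character: the convex-analytic equivalence $(2)\Leftrightarrow(3)$, which parallels Donaldson's treatment of the toric case \cite{Do2,Sz}, and the geometric equivalence $(1)\Leftrightarrow(2)$, which expresses the ``compatibility'' principle underlying the whole paper. Two of the four implications are essentially free. The implication $(2)\Rightarrow(1)$ holds by definition, since a compatible extremal K\"ahler metric is in particular extremal. The implication $(2)\Rightarrow(3)$ is the soft half of the variational picture: if \eqref{operator} has a solution $U\in\cS(\Delta)$, the associated compatible metric minimises the relative K-energy $\cE^\Omega$ of \eqref{K-energy}, which is convex along the affine segments $U_s=U_0+sU_1$ in $\cS(\Delta)$ because $-\log\det\mathrm{Hess}\,U$ is convex; since $\cF^\Omega$ is linear, the leading slope of $\cE^\Omega$ along a ray in the direction of a piecewise-linear convex $f$ is a positive multiple of $\cF^\Omega(f)$, so the existence of a minimiser forces $\cF^\Omega(f)\ge0$, with equality only for affine $f$.

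The first genuine difficulty is $(3)\Rightarrow(2)$. Here I would attempt to solve \eqref{operator} by direct minimisation of $\cE^\Omega$ over $\cS(\Delta)$, or by a continuity method in the parameter $\lambda$ of Proposition~\ref{perturbation}, and the whole burden falls on deriving uniform a priori estimates for the symplectic potential $U$: a $C^0$ bound, an interior regularity bound on $\mathrm{Hess}\,U$, and---most delicately---a boundary estimate controlling the logarithmic blow-up of $U$ prescribed by the Guillemin boundary conditions on $\partial\Delta$. This is exactly the hard analytic core of Donaldson's program, and it is at present only fully available in special situations; as recorded in the footnote to the statement, the case $\ell=2$ with $\hat S$ a point is settled in \cite{Do2}. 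The additional feature of our setting---the degeneracy of the weight $p(z)$ along the blow-down faces $F_b$ for $b\in\cB$, together with the nontrivial semisimple base $\hat S$---makes the boundary estimate the crux, and I expect it to be the principal obstacle to a general proof.

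For $(1)\Rightarrow(2)$ I would begin with Calabi's theorem \cite{cal-two}, which forces any extremal K\"ahler metric in $\Omega$ to be invariant under a maximal compact subgroup $G$ of ${\widetilde{\rm Aut}}_0(M,J)$, together with the uniqueness of extremal metrics modulo ${\widetilde{\rm Aut}}_0(M,J)$ \cite{CT,mab-two}; by Proposition~\ref{max-torus} this reduces matters to showing that a metric invariant under the maximal torus $\T$ is compatible in the sense of Definition~\ref{rigid}, i.e.\ that $R_x^*g$ depends only on $z(x)$. This rigidity is the working conjecture announced in the introduction and is the deepest open ingredient. A natural attack is to deform, as in the proof of Theorem~\ref{main}, to nearby complex structures $J_t$ for which the bundle splits as a sum of stable pieces and compatibility can be established directly (via Theorem~\ref{th:existence} and Lemma~\ref{stability}), and then to pass to the limit $t\to0$. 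The obstacle is that---unlike in the CSC case, where local symmetry of the limit is forced by the de Rham decomposition---there is no a priori reason for the limiting extremal metric to retain the rigid toric-bundle structure, so a new rigidity input for $\T$-invariant extremal metrics seems unavoidable. In summary, I expect the two hard directions $(3)\Rightarrow(2)$ and $(1)\Rightarrow(2)$ to carry the real content, the former an analytic estimate problem and the latter a genuine rigidity problem.
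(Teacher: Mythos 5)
The statement you are asked about is a \emph{conjecture}, and the paper does not prove it: its own contribution is the partial result recorded as Proposition~\ref{p:stability}, together with the explicit remark that $(3)\Rightarrow(2)$ is the hard open direction, known only for $\ell=1$ and for toric surfaces with vanishing extremal field. Your proposal is consistent with that assessment: you correctly identify $(2)\Rightarrow(1)$ as trivial, $(2)\Rightarrow(3)$ as the soft variational half, and $(3)\Rightarrow(2)$ and $(1)\Rightarrow(2)$ as genuinely open (an a priori estimate problem and a rigidity problem, respectively), and you do not claim to close them. So there is no ``gap'' to report beyond the one the authors themselves acknowledge.

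Where your route differs from the paper's is in the implications that \emph{are} established. For $(2)\Rightarrow(3)$ you argue via convexity of the relative K-energy \eqref{K-energy} and the linear slope of $\cE^\Omega$ along rays $U+sf$; the paper instead integrates \eqref{operator} by parts against $f$ using the boundary conditions \eqref{eq:toricboundary} to obtain the closed formula \eqref{F-extremal}, $\cF^\Omega(f)=\int_\Delta\langle{\bf H},\mathrm{Hess}\,f\rangle\,p(z)\,dv$, interpreted distributionally for piecewise linear convex $f$. The two are equivalent in spirit, but the paper's identity handles the equality case cleanly (positive-definiteness of ${\bf H}$ forces $\mathrm{Hess}\,f=0$, hence $f$ affine), which is exactly the point your sketch glosses over; making ``existence of a minimiser forces equality only for affine $f$'' precise requires the Zhou--Zhu type argument the paper cites. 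You also miss a partial implication the paper does prove from $(1)$ alone, without passing through the rigidity step $(1)\Rightarrow(2)$: formula \eqref{K-energy} shows $\cF^\Omega(f)\ge0$ whenever $\cE^\Omega$ is bounded below, and by Chen--Tian this lower bound is a consequence of the mere existence of an extremal K\"ahler metric in $\Omega$. This gives the nonnegativity half of $(3)$ directly from $(1)$, which is the first assertion of Proposition~\ref{p:stability} and is strictly more than your plan delivers, since your route to $(3)$ from $(1)$ is blocked by the open implication $(1)\Rightarrow(2)$.
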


Of course, by the proof of Theorem~\ref{th:extremal},
Conjecture~\ref{con:2} would imply Conjecture~\ref{con:1}.

Our formula \eqref{K-energy} can be used to show as
in~\cite[Prop.~7.1.3]{Do2} that $\cF^{\Omega}(f) \ge 0$ if the relative
K-energy is bounded from below. However, according to Chen--Tian~\cite{CT},
the boundedness from below of $\cE^\Omega$ is a necessary condition for the
existence of an extremal K\"ahler metric.

If $\Omega$ admits a {\it compatible} extremal K\"ahler metric with symplectic
potential $U$ and inverse hessian ${\bf H}$, one can use \eqref{operator} and
integration by parts (taking into account \eqref{eq:toricboundary}) in order
to re-write \eqref{F} as
\begin{equation}\label{F-extremal}
\cF^{\Omega} (f) = \int_{\Delta} \langle {\bf H}, {\rm Hess} f \rangle p(z)
dv.
\end{equation}
This formula makes sense for smooth functions $f(z)$, but can also be used to
calculate $\cF^{\Omega} (f)$ in distributional sense for any piecewise linear
convex function as in \cite{ZZ2}: using the fact that ${\bf H}$ is positive
definite, we obtain the analogue of a result in \cite{ZZ2}, showing that the
second statement of Conjecture~\ref{con:2} implies the third.

We thus have the following partial result.
\begin{prop}\label{p:stability} If $\Omega$ admits an extremal K\"ahler metric
then $\cF^{\Omega}(f) \ge 0$ for any convex piecewise linear function. If
$\Omega$ admits a compatible extremal metric then, furthermore,
$\cF^{\Omega}(f)=0$ if and only if $f$ is an affine function on $\Delta$.
\end{prop}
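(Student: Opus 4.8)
The plan is to establish the two assertions by separate arguments, the first adapting Donaldson's reasoning \cite[Prop.~7.1.3]{Do2} and the second the distributional computation of \cite{ZZ2}, both carried out for the weighted measure $p(z)\,dv$ that governs the present, more general, setting.

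For the first assertion I would begin from the Chen--Tian theorem \cite{CT}: if $\Omega$ admits an extremal K\"ahler metric then the relative K-energy is bounded below on $\Omega$, and restricting to compatible metrics, where it is computed by \eqref{K-energy}, this lower bound persists on $\cS(\Delta)$. Fix a reference potential $U_0\in\cS(\Delta)$ and a smooth convex function $f$ on $\Delta$, and consider the ray $U_s=U_0+sf$, $s>0$; as $U_0$ is strictly convex and $f$ is convex with no effect on the logarithmic behaviour at $\partial\Delta$, each $U_s$ lies in $\cS(\Delta)$. Since $\mathrm{Hess}\,f\ge0$ gives $\det\mathrm{Hess}\,U_s\ge\det\mathrm{Hess}\,U_0$, the linearity of $\cF^\Omega$ and \eqref{K-energy} yield $\cE^\Omega(U_s)=\cE^\Omega(U_0)+2s\,\cF^\Omega(f)-\int_\Delta\log\bigl(\det\mathrm{Hess}\,U_s/\det\mathrm{Hess}\,U_0\bigr)\,p(z)\,dv$, and as the last integrand is non-negative,
\[
\cE^\Omega(U_s)\le\cE^\Omega(U_0)+2s\,\cF^\Omega(f).
\]
If $\cF^\Omega(f)<0$ the right-hand side tends to $-\infty$ as $s\to\infty$, contradicting the lower bound; hence $\cF^\Omega(f)\ge0$ for every smooth convex $f$. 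To pass to an arbitrary piecewise linear convex $f$, I would mollify: convex smoothings $f_\varepsilon\to f$ uniformly on $\Delta$, and $\cF^\Omega$ is continuous for uniform convergence, being the integral of $f$ against the finite measures $p\,d\sigma$ on $\partial\Delta$ and $\tfrac12\bigl(\langle A,z\rangle+B+\sum_j\Scal_j/(\langle p_j,z\rangle+c_j)\bigr)p\,dv$ on $\Delta$; thus $\cF^\Omega(f)=\lim_\varepsilon\cF^\Omega(f_\varepsilon)\ge0$.

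For the second assertion I would invoke the rewriting \eqref{F-extremal}, available because a compatible extremal metric solves \eqref{operator}: integrating \eqref{F} by parts against this solution and using the boundary conditions \eqref{eq:toricboundary} turns $\cF^\Omega(f)$ into $\int_\Delta\langle\mathbf{H},\mathrm{Hess}\,f\rangle\,p(z)\,dv$, with $\mathbf{H}=\mathrm{Hess}(U)^{-1}$ positive definite on $\Delta^0$. For smooth convex $f$ the integrand is pointwise non-negative. For piecewise linear convex $f$ the Hessian $\mathrm{Hess}\,f$ is a non-negative $S^2\mathfrak t$-valued measure supported on the creases, each crease contributing $(\text{gradient jump})\,\mu\otimes\mu$ with $\mu\in\mathfrak t$ the jump direction, times the $(\ell-1)$-dimensional surface measure of the crease; hence $\langle\mathbf{H},\mathrm{Hess}\,f\rangle=\sum_{\text{creases}}(\text{jump})\,\mathbf{H}(\mu,\mu)\,d\mathcal{H}^{\ell-1}$ is a non-negative measure because $\mathbf{H}(\mu,\mu)>0$. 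This re-proves $\cF^\Omega(f)\ge0$ and, since $p(z)>0$ and $\mathbf{H}$ is positive definite on $\Delta^0$, it shows that $\cF^\Omega(f)=0$ forces the crease measure to vanish on $\Delta^0$; as any non-affine piecewise linear convex function has a crease meeting $\Delta^0$, equality holds precisely when $f$ is affine.

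The step I expect to be the main obstacle is making the identity \eqref{F-extremal} rigorous for merely piecewise linear $f$ in the presence of the two boundary degeneracies special to this setting: by \eqref{eq:toricboundary} the tensor $\mathbf{H}$ vanishes in the normal direction along every facet of $\Delta$, while the weight $p(z)$ vanishes to order $d_b$ along each blow-down facet $F_b$. One must verify that the integration by parts producing \eqref{F-extremal} generates no spurious boundary contributions from creases running into $\partial\Delta$, and that the crease measure is genuinely registered on $\Delta^0$, where both $p$ and $\mathbf{H}$ are strictly positive. This is exactly where I would lean on the careful boundary analysis of \cite{ZZ2}, combined with the mollification above to transfer the pointwise inequality into its distributional form.
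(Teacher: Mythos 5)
Your proposal follows the paper's argument essentially verbatim: the first assertion is obtained from the Chen--Tian lower bound on the relative K-energy combined with Donaldson's ray argument applied to formula \eqref{K-energy}, and the second from the rewriting \eqref{F-extremal} together with the distributional evaluation of $\langle{\bf H},\mathrm{Hess}\,f\rangle$ on the creases as in \cite{ZZ2}. The details you supply (the ray $U_s=U_0+sf$, the determinant monotonicity, the mollification, and the positivity of ${\bf H}(\mu,\mu)$ on $\Delta^0$) are exactly the ones the paper delegates to \cite[Prop.~7.1.3]{Do2} and \cite{ZZ2}, so the proof is correct and not a different route.
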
 
Of course, the most difficult part of Conjecture~\ref{con:2} is to prove (3)
$\Rightarrow$ (2). So far the Conjecture~\ref{con:2} has been fully
established in the cases when $\ell=1$~\cite{ACGT} and when $M$ is a toric
surface (i.e. $\ell=2$ and $\hat S$ is a point) with vanishing extremal vector
field~\cite{Do5}.

\subsection{Computing $\cF^{\Omega}$}

It is natural to consider (following Donaldson~\cite{Do2}) the space of $S^2\mathfrak t^*$-valued functions ${\bf H}$ on
$\Delta$ satisfying just the boundary conditions \eqref{eq:toricboundary}. If
such a function satisfies the (underdetermined, linear) equation
\eqref{operator}, then formula \eqref{F-extremal} holds, and it can be used to
compute the action of $\cF^{\Omega}$ (in distributional sense) on piecewise
linear functions.

Note that if a solution to \eqref{operator} exists, then so do many because
the double divergence is underdetermined. 

% Indeed, there are locally exact adjoint complexes of linear differential operators on $\Delta\subset\mathfrakt^*$: 
%\begin{equation*}\label{sequences}\begin{split} C^\infty(\R)&\rightarrow C^\infty(S^2\mathfrak t)\rightarrow C^\infty(\Lambda^2\mathfrak t\odot\mathfrak t)\rightarrow\cdots\\ C^\infty(\R)&\leftarrow C^\infty(S^2\mathfrak t^*)\leftarrow C^\infty(\Lambda^2\mathfrak t^*\odot\mathfrak t^*)\leftarrow\cdots, \end{split}\end{equation*} where $\Lambda^2\mathfrak t\odot\mathfrak t$ denotes the alternating-free tensors in $\Lambda^2\mathfrak t\otimes\mathfrak t$ (the kernel of the projection, alternation, to $\Lambda^3\mathfrak t$). The first two arrows in the top line are the hessian and the exterior derivative (of a $\mathfrak t$ valued $1$-form). The adjoint operators in the bottom line are the double divergence and the symmetrized divergence.

If a solution ${\bf H}$ of \eqref{operator} happens to be {\it positive
definite} on each face of $\Delta$, i.e. if it verifies the positivity
condition in Sect.~\ref{s:generalised-calabi}, then formulae \eqref{M}
introduce an almost K\"ahler metric on $M$ (see e.g.~\cite{hfkg2}) and one can
show that \eqref{scal} computes its hermitian scalar curvature (see
Appendix~\ref{app:almost-kahler}). Thus, positive definite solutions of
\eqref{operator} correspond to compatible extremal almost K\"ahler metrics.
If such extremal almost K\"ahler metrics exist, it then follows from \eqref{F-extremal} (see~\cite{ZZ2} and
Proposition~\ref{p:stability} above) that the condition (3) of
Conjecture~\ref{con:2} is verified. Thus, the existence of a positive definite
solution ${\bf H}$ of \eqref{operator} (and verifying the boundary conditions
\eqref{eq:toricboundary}) is conjecturally equivalent to the existence of a
compatible extremal K\"ahler metric (corresponding to another positive
definite function ${\bf H}^{\Omega}$ with inverse equal to the hessian of a
function $U_{\Omega}$). In fact, following \cite{Do2}, as $\log\det$ is
strictly convex on positive definite matrices, the functional $\int_\Delta
(\log \det{\bf H}) p(z) dv$ is strictly convex on the space of positive
definite solutions of \eqref{operator}, and therefore has at most one minimum
${\bf H}^\Omega$.  Such a minimum would automatically have its inverse equal
to the hessian of a function $U_{\Omega}$ (see  \cite{Do2}).  Thus, ${\bf H}^\Omega$ would
then give the extremal K\"ahler metric in the compatible K\"ahler class
$\Omega$.

Thus motivated, it is natural to wonder if on the manifolds we consider in
this paper a (not necessarily positive definite) solution ${\bf H}$ of
\eqref{operator} exists, thus generalizing the extremal polynomial
introduced in \cite{ACGT} on $M=P(E_0 \oplus E_1)\to S$ (in fact ${\bf
P}(z)=p(z){\bf H}(z)$ would be the precise generalization).

\subsection{Example: projective plane bundles over a curve}

We illustrate the above discussion by explicit calculations on the manifold
$M= P(\cO \oplus \cL_1 \oplus \cL_2) \to \Sigma$, where ${\cL}_1$ and
${\cL}_2$ are holomorphic line bundles over a compact complex curve $\Sigma$
of genus ${\bf g}$. We put $p_i= {\rm deg}(\cL_i)$ and assume without loss
that $p_2\ge p_1\ge0$. Note that in the case $p_1=p_2=0$, the vector bundle
$E= \cO \oplus \cL_1 \oplus \cL_2$ is polystable, and therefore the existence
of extremal K\"ahler metrics is given by Theorem~\ref{main}. The cases
$p_1=p_2>0$ and $p_2>p_1=0$, on the other hand, are solved in \cite{ACGT}. We
thus assume furthermore that $p_2>p_1 >0$.

To recast our example in the set up of Sect.~\ref{calabi-type}, we take a
riemannian metric $g_{\Sigma}$ of constant scalar curvature $4(1-{\bf g})$ on
$\Sigma$. To ease the notation, we put $C= 4({\bf g} -1)$. Let $z_{i}$ be the
momentum map of the natural $S^1$-action by multiplication on $\cL_i$. Thus,
without loss, for a compatible K\"ahler metric on $M$, the momentum coordinate
$z=(z_{1},z_{2})$ takes values in the simplex $\Delta = \{ (z_{1}, z_{2}) \in
{\mathbb R}^{2}\, | \, z_{1} \geq 0, z_{2} \geq 0, 1-z_{1}-z_{2} \ge 0 \}$
(which is the Delzant polytope of the fibre $\C P^2$ viewed as a toric
variety).

It is shown in \cite[App. A2]{ACGT} that in this case there are no extremal
compatible K\"ahler metrics with a hamiltonian $2$-form of order $2$ while
Theorem~\ref{th:extremal} does imply existence of compatible extremal
K\"ahler metrics in small K\"ahler classes. Therefore, we do not have an
{\it explicit} construction of these extremal K\"ahler metrics. Instead, we
will now attempt to find explicit extremal almost K\"ahler metrics (see the
preceding section and the Appendix below). We thus want to find a smooth
matrix function ${\bf H}(z)= (H_{rs}(z))$ satisfying the boundary
conditions \eqref{eq:toricboundary} and which solves the linear equation
\eqref{operator}. Motivated by the explicit form of such a matrix in the
case when a hamiltonian $2$-form does exist~\cite{hfkg1}, we look for
solutions of a `polynomial' form $H_{rs} = \frac{P_{rs}}{(c + p_{1}z_{1} +
p_{2} z_{2})}$, where $P_{rs}(z)$ are fourth degree polynomials in $z_{1}$
and $z_{2}$, and the constant $c$ is such that $c + p_{1}z_{1} + p_{2}
z_{2}>0$ on $\Delta$ (recall that $c$ parametrizes compatible K\"ahler
classes on $M$). The boundary conditions are then solved by
\begin{align*}
P_{11} &= 2 (c + p_1 z_1 + p_2 z_2) z_1 (1 - z_1)\\ &\qquad+ 
 z_1^2 (x_0 z_2^2 + x_2 (1 - z_1 - z_2)^2 + 2 y_1 z_2 (1 - z_1 - z_2)),\\
P_{12} & = -2 (c + p_1 z_1 + p_2 z_2) z_1 z_2 \\&\qquad+ 
 z_1 z_2 (y_0 (1 - z_1 - z_2)^2 - 
    x_0 z_1 z_2 - (1 - z_1 - z_2) (y_1 z_1 + y_2 z_2)),\\
P_{22} & =  2 (c + p_1 z_1 + p_2 z_2) z_2 (1 - z_2)\\ &\qquad + 
 z_2^2 (x_0 z_1^2 + x_1 (1 - z_1 - z_2)^2 + 2 y_2 z_1 (1 - z_1 - z_2)),
\end{align*}
where $x_{0},x_{1},x_{2},y_{0},y_{1},y_{2}$ are free parameters. The extremal
condition \eqref{operator} corresponds to the linear equations
\begin{equation}\label{extr}
\begin{split}
y_{0} &= -x_1 - x_2 + v_0, \\ 
y_{1} &= -x_0 - x_2 + v_1, \\
y_{2}  &= -x_0 - x_1 + v_2, 
\end{split}
\end{equation}
with
\begin{align*}
v_{0} & = \tfrac{-(12 c + C + 4 p_1 + 4 p_2) 
(5 c p_1^2 + p_1^3 + 5 c p_1 p_2 + 5 p_1^2 p_2 + 
     5 c p_2^2 + 5 p_1 p_2^2 + p_2^3))}{(2 (50 c^3 + 50 c^2 p_1 + 
     13 c p_1^2 + p_1^3 + 50 c^2 p_2 + 37 c p_1 p_2 + 5 p_1^2 p_2 + 
     13 c p_2^2 + 5 p_1 p_2^2 + p_2^3)}\\
v_{1} & = \tfrac{-(12 c + C + 4 p_1 + 4 p_2) 
(15 c p_1^2 + 3 p_1^3 - 15 c p_1 p_2 + 
     3 p_1^2 p_2 + 5 c p_2^2 - 3 p_1 p_2^2 + p_2^3))}{(2 (50 c^3 + 
     50 c^2 p_1 + 13 c p_1^2 + p_1^3 + 50 c^2 p_2 + 37 c p_1 p_2 + 
     5 p_1^2 p_2 + 13 c p_2^2 + 5 p_1 p_2^2 + p_2^3)}\\
v_{2} & = \tfrac{-(12 c + C + 4 p_1 + 4 p_2)
(5 c p_1^2 + p_1^3 - 15 c p_1 p_2 - 
     3 p_1^2 p_2 + 15 c p_2^2 + 3 p_1 p_2^2 + 3 p_2^3))}{(2 (50 c^3 + 
     50 c^2 p_1 + 13 c p_1^2 + p_1^3 + 50 c^2 p_2 + 37 c p_1 p_2 + 
     5 p_1^2 p_2 + 13 c p_2^2 + 5 p_1 p_2^2 + p_2^3)}.
\end{align*}
Thus, given a compatible K\"ahler class on $M$, we have a $3$-parameter family
of smooth `polynomial' solutions ${\bf H}(z)$ to \eqref{operator}, which
verify the boundary conditions \eqref{eq:toricboundary} on $\Delta$. Now,
investigating the integrability condition (that ${\bf H}^{-1}$ be a hessian of
a smooth function on $\Delta^0$, see the previous section), we find out that it is equivalent to the following five algebraic equations on the parameters $(x_0,x_1,x_2, y_0, y_1.y_2)$
\begin{gather}\label{lin-int}
x_0 = y_1+y_2, \qquad
x_1 = y_2+y_0, \qquad
x_2 = y_0+y_1,
\end{gather}
\begin{equation}\label{non-lin-int}
\begin{split}
2 (p_2 - p_1) y_0 + 2 p_2 y_1 - y_0 y_1 &= 0\\
2 (p_1 - p_2) y_0 + 2 p_1 y_2 - y_0 y_2 &= 0.
\end{split}
\end{equation}
The problem is over-determined, but there is a unique solution ${\bf
H}^{\Omega}_0$ satisfying the linear system (\ref{lin-int}) (additionally to
\eqref{extr}): we compute that this solution is given by
\begin{align*}
x_0 &= \tfrac{1}{10} (-2v_0 + 3v_1 + 3v_2), \\
x_1 &= \tfrac{1}{10}(3v_0 - 2v_1 + 3v_2), \\
x_2 & = \tfrac{1}{10}(3v_0 + 3v_1 - 2v_2),\\
y_0 &=  \tfrac{1}{10} (4v_0 - v_1 - v_2), \\
y_1  &= \tfrac{1}{10}(-v_0 + 4 v_1 - v_2), \\
y_2 &= \tfrac{1}{10} (-v_0 -v_1 + 4v_2).
\end{align*}
Substituting back in \eqref{non-lin-int}, one sees that the full integrability
conditions can be solved if $12 c + C + 4 p_1 + 4 p_2 = 0$ (a constraint that is never
satisfied for $p_2> p_1 \ge 1, C = 4({\bf g}-1)$ and $c>0$); this
observation is consistent with the non-existence result in
\cite[App.~A2]{ACGT}.

\smallskip
We now investigate the positivity condition for our distinguished solution
${\bf H}^{\Omega}_0$ of \eqref{eq:toricboundary} and \eqref{operator}.
First of all, when $c\to \infty$, the $v_i$'s tend to $0$, so ${\bf
H}^{\Omega}_0$ tends to the matrix associated to a Fubini--Study metric on
$\C P^2$. It follows that ${\bf H}^{\Omega}_0$ becomes positive-definite on
each face for sufficiently small K\"ahler classes, and therefore ${\bf
H}^{\Omega}_0$ defines an {\it explicit} extremal (non-K\"ahler) almost
K\"ahler metric in $\Omega$ (see Appendix~\ref{app:almost-kahler}
below). This is of course consistent (via Conjecture~\ref{con:2}) with the
existence of a (non-explicit) extremal K\"ahler metric in $\Omega$, given
by Theorem~\ref{th:extremal}.  Furthermore, if ${\bf g} = 0,1$
(i.e. $C<0$), a computer assisted verification shows that, in fact, ${\bf
H}^{\Omega}_0$ is positive definite on each face of $\Delta$ for {\it all}
K\"ahler classes. We thus obtain the following result.

\begin{prop}\label{pr:existence} Let $M= P(E) \rTo^p \Sigma$ with
$E= \cO \oplus \cL_1 \oplus \cL_2$, where ${\cL}_1$ and ${\cL}_2$ are
holomorphic line bundles of degrees $1\le p_1<p_2$ over a compact complex
curve $\Sigma$ of genus ${\bf g}$.

If ${\bf g} = 0, 1$, then $M$ admits a compatible extremal almost K\"ahler
metric for the K\"ahler form of any compatible K\"ahler metric on $M$. In
particular, for every K\"ahler class on $M$ the condition \textup{(3)} of
Conjecture~\textup{\ref{con:2}} is verified.

If ${\bf g} \ge 2$, then the same conclusion holds for the compatible
K\"ahler forms in sufficiently small K\"ahler classes $\Omega_k = 2\pi
c_1(\cO(1)_E) + kp^*[\omega_{\Sigma}], \ k \gg 0$.
\end{prop}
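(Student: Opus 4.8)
The plan is to build the asserted metric from the distinguished polynomial solution ${\bf H}^{\Omega}_0$ of the linear equation \eqref{operator} exhibited above, which already satisfies the boundary conditions \eqref{eq:toricboundary}, and then to verify its positivity on each face of the simplex $\Delta$ in the stated ranges of genus and K\"ahler class. As recalled in the preceding subsection, any solution ${\bf H}$ of \eqref{operator} and \eqref{eq:toricboundary} that is positive definite on each face determines, via the formulae \eqref{M}, a compatible almost K\"ahler metric whose hermitian scalar curvature is computed by \eqref{scal} (see Appendix~\ref{app:almost-kahler}), and this metric is automatically extremal in the almost K\"ahler sense because ${\bf H}$ solves \eqref{operator}. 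The whole statement therefore reduces to a positivity check for ${\bf H}^{\Omega}_0$: on $\Delta^0$ one needs the full $2\times 2$ matrix $(H_{rs})$ to be positive definite, and on each open edge its induced scalar on $\mathfrak t/\mathfrak t_{F_i}$ to be positive (positivity being vacuous at the vertices, where $\mathfrak t/\mathfrak t_{F}=0$).

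First I would dispose of the small K\"ahler classes, which settles the ${\bf g}\ge 2$ claim and the tail of the cases ${\bf g}=0,1$. Letting $c\to\infty$, each $v_i$ tends to $0$, since its numerator is of lower degree in $c$ than its denominator; hence the coefficients $x_i,y_i$ of ${\bf H}^{\Omega}_0$ vanish in the limit and ${\bf H}^{\Omega}_0$ converges uniformly on the compact polytope $\Delta$ to the matrix ${\bf H}_{\mathrm{FS}}$ attached to the Fubini--Study metric on the fibre $\C P^2$. Since ${\bf H}_{\mathrm{FS}}$ is positive definite on each face (hence bounded below by a positive constant there) and positivity on each face is preserved under small uniform perturbations, ${\bf H}^{\Omega}_0$ remains positive definite on each face for all $c$ large enough, that is, for all compatible K\"ahler classes $\Omega_k$ with $k\gg 0$.

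To upgrade this to \emph{all} compatible K\"ahler classes when ${\bf g}=0,1$, I would verify positivity of ${\bf H}^{\Omega}_0$ directly on the interior and on the three edges of $\Delta$, using the sign $C=4({\bf g}-1)\le 0$. After clearing the common positive denominator $c+p_1z_1+p_2z_2$, the relevant principal minors become explicit polynomials in $(z_1,z_2)$ with coefficients rational in $(c,p_1,p_2,C)$, so positivity on $\Delta$ for all admissible parameters $c>0$, $1\le p_1<p_2$, $C\le 0$ reduces to a finite list of polynomial inequalities. This is the step I expect to be the main obstacle: it is elementary in principle but genuinely computational, and it is exactly here that the computer-assisted verification enters. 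The structural reason the sign $C\le 0$ helps is that lowering $C$ only adds a nonpositive multiple of the fibrewise Fubini--Study contribution, reinforcing the positivity coming from the positively curved fibre rather than opposing it.

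Finally, once positivity of ${\bf H}^{\Omega}_0$ is in hand in the relevant range, I would conclude directly. On the one hand ${\bf H}^{\Omega}_0$ defines through \eqref{M} a compatible extremal almost K\"ahler metric, necessarily non-K\"ahler because the integrability conditions \eqref{non-lin-int} fail for these parameters; this is consistent with the existence of a genuine extremal K\"ahler metric given by Theorem~\ref{th:extremal}. On the other hand, formula \eqref{F-extremal} yields
\[
\cF^{\Omega}(f)=\int_{\Delta}\langle {\bf H}^{\Omega}_0,\mathrm{Hess}\,f\rangle\,p(z)\,dv\ge 0
\]
for every convex piecewise linear $f$, with equality precisely when $f$ is affine, since ${\bf H}^{\Omega}_0$ is positive definite. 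This is exactly condition \textup{(3)} of Conjecture~\ref{con:2} in the stated ranges, finishing the argument.
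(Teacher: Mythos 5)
Your proposal is correct and follows essentially the same route as the paper: reduce everything to positivity of the distinguished solution ${\bf H}^{\Omega}_0$ on the faces of $\Delta$, obtain it for $k\gg 0$ by the limit $c\to\infty$ to the Fubini--Study matrix, defer the genus $0,1$ case to an explicit (computer-assisted) polynomial positivity check, and then conclude via the hermitian scalar curvature formula and \eqref{F-extremal}. The paper does exactly this, including the reliance on a computer-assisted verification for the all-classes claim when $C<0$.
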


\smallskip
As speculated in the previous section, the explicit solution ${\bf
H}^{\Omega}_0$ of \eqref{eq:toricboundary} and \eqref{operator} can be used to
compute the action of the functional ${\cF}^{\Omega}$ on piecewise linear
convex functions (by extending formula \eqref{F-extremal} in a distributional
sense, after integrating by parts and using \eqref{eq:toricboundary}). As a
simple illustration of this, let us take a simple crease function $f_a$ with
crease along the segment $S_a=\{(t,a-t), 0<t<a\}$ for some $a \in (0,1)$ (thus
as $a \rightarrow 0$, the crease moves to the lower left corner of the simplex
$\Delta$). A normal of the crease is $u=(1,1)$ and one easily finds that
\begin{equation}
\begin{split}
\cF^{\Omega}(f_a) &= \int_{S_a} H^{\Omega}_0(u,u) d\sigma \\
&= \int_{0}^{a} ((H_{11} + 2 H_{12} + H_{22})(t,a-t))(c + p_{1}t + 
p_{2}(a-t)) \, dt,
\end{split}
\end{equation}
where $d\sigma$ is the contraction of the euclidian volume $dv$ on $\R ^2$ by
$u$.  Note that the integrand (being a rational function of $c$ with a
non-vanishing denominator at $c=0$), and hence the integral, is continuous
near $c=0$; for $c=0$ the integral equals
\[
\tfrac{1}{6} (1 - a) a^3 (-C + 2(p_{1}+ p_{2}) + a (C + 4(p_{1} + p_{2}))),\]
which is clearly negative for $a \in (0,1)$ sufficiently small as long as $C = 4({\bf g} -1)> 2(p_{1} + p_{2})$.
If we take ${\bf g} > 2$, such $p_{1}$ and $p_{2}$  do exist. By Proposition~\ref{p:stability}, this implies  a non-existence result  of extremal 
K\"ahler metrics when $p_{1}$ and $p_{2}$ satisfy the 
above inequality and $c$ is small enough. (As a special case, for
$p_{1}=p_{2}$ we have recast the non-existence part of 
\cite[Thm.~6]{ACGT}.)
\begin{prop}\label{non-existence} Let $M$ be as in
Proposition~\textup{\ref{pr:existence}}, with ${\bf g} >2$ and $p_1,p_2$
satisfying $ 2({\bf g} -1)> p_{1} + p_{2}$. Then all sufficiently `big'
K\"ahler classes do not admit any extremal K\"ahler metric.
\end{prop}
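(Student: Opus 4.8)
The plan is to derive a contradiction from the necessary condition recorded in Proposition~\ref{p:stability}: if a compatible class $\Omega$ carries an extremal K\"ahler metric, then $\cF^\Omega(f)\ge 0$ for every convex piecewise-linear function $f$ on $\Delta$. It therefore suffices to exhibit, for every sufficiently big K\"ahler class---equivalently, for every sufficiently small value of the parameter $c$ (which corresponds to small $k$ in $\Omega_k=2\pi c_1(\cO(1)_E)+kp^*[\omega_{\Sigma}]$)---a single convex piecewise-linear test function $f$ with $\cF^\Omega(f)<0$.

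The test functions I use are the simple crease functions $f_a$ introduced just above, with crease along $S_a=\{(t,a-t):0<t<a\}$ and normal $u=(1,1)$. The crucial observation is that, although the distinguished solution ${\bf H}^\Omega_0$ of \eqref{operator} constructed above need not be positive definite (and so need not define a genuine metric), it does solve the linear equation \eqref{operator} together with the boundary conditions \eqref{eq:toricboundary}; as explained in the discussion preceding this proposition, this is precisely what is needed to evaluate the intrinsically defined functional $\cF^\Omega$ via \eqref{F-extremal} in the distributional sense. Indeed, since $f_a$ is piecewise linear with a single crease across $S_a$, its distributional Hessian is a vector-valued measure carried by $S_a$ and proportional to $u\otimes u$, so substituting into \eqref{F-extremal} collapses the integral over $\Delta$ to the one-dimensional integral
\[
\cF^\Omega(f_a)=\int_0^a\bigl(H_{11}+2H_{12}+H_{22}\bigr)(t,a-t)\,\bigl(c+p_1t+p_2(a-t)\bigr)\,dt,
\]
with ${\bf H}^\Omega_0=(H_{rs})$. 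By the integration-by-parts identity relating \eqref{F} and \eqref{F-extremal}, this value is independent of the chosen solution of \eqref{operator} and equals the intrinsic $\cF^\Omega(f_a)$.

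It then remains to show that this integral is negative once $c$ is small and $a$ is chosen suitably. I first evaluate at $c=0$, where substituting the explicit coefficients $x_i,y_i$ of ${\bf H}^\Omega_0$ yields
\[
\cF^\Omega(f_a)\big|_{c=0}=\tfrac16(1-a)a^3\bigl(-C+2(p_1+p_2)+a(C+4(p_1+p_2))\bigr),\qquad C=4({\bf g}-1).
\]
Under the hypothesis $2({\bf g}-1)>p_1+p_2$, i.e.\ $C>2(p_1+p_2)$, the bracket tends to the negative number $-C+2(p_1+p_2)$ as $a\to 0$, so for all sufficiently small $a$ this expression is strictly negative; I fix one such $a$. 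The integrand is a rational function of $c$ whose denominator (coming from the $v_i$ in \eqref{extr}) does not vanish at $c=0$ when $p_1,p_2\ge 1$, so $\cF^\Omega(f_a)$ depends continuously on $c$ near $c=0$. Hence there is $c_0>0$ with $\cF^\Omega(f_a)<0$ for all $0<c<c_0$, and by Proposition~\ref{p:stability} no such class can carry an extremal K\"ahler metric. The only genuinely delicate point is the justification that the distributional evaluation through the (possibly indefinite) solution ${\bf H}^\Omega_0$ really computes the intrinsic functional $\cF^\Omega$; this rests on the fact that the passage from \eqref{F} to \eqref{F-extremal} uses only that ${\bf H}^\Omega_0$ satisfies \eqref{operator} and \eqref{eq:toricboundary}, and never invokes positivity.
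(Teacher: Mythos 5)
Your argument is correct and coincides with the paper's own proof: both evaluate $\cF^{\Omega}$ on the crease functions $f_a$ via the distributional form of \eqref{F-extremal} applied to the explicit (not necessarily positive definite) solution ${\bf H}^{\Omega}_0$ of \eqref{operator} and \eqref{eq:toricboundary}, reduce to the one-dimensional integral over the crease, compute it at $c=0$ to get $\tfrac{1}{6}(1-a)a^3(-C+2(p_1+p_2)+a(C+4(p_1+p_2)))$, and conclude by continuity in $c$ near $0$ together with Proposition~\ref{p:stability}. Your remark that the identification of \eqref{F} with \eqref{F-extremal} uses only the linear equation and the boundary conditions, never positivity of ${\bf H}$, is precisely the point the paper makes in the subsection on computing $\cF^{\Omega}$.
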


\appendix
\section{Compatible extremal almost K\"ahler metrics}\label{app:almost-kahler}

In this appendix, we calculate the hermitian scalar curvature of a compatible
almost K\"ahler metric and extend the notion of {\it extremal} K\"ahler
metrics to the more general almost K\"ahler case.

Recall that on a general almost K\"ahler manifold $(M^{2m}, g,J, \omega)$, the
{\it canonical hermitian connection} $\nabla$ is defined by
\begin{equation}\label{chern-connection}
\nabla_X Y = D_X Y - \frac{1}{2} J(D_X J)(Y),
\end{equation}
where $D$ is the Levi--Civita connection of $g$. Note that
\begin{equation}\label{nijenhuis}
g((D_X J)Y, Z)  = \frac{1}{2} g(N(X,Y), JZ)
\end{equation}
where $N(X,Y) = [JX,JY] - J[JX,Y] - J[X,JY] -[X,Y]$ is the Nijenhuis tensor of
$J$.  The {\it Ricci form}, $\rho^{\nabla}$, of $\nabla$ represents $2\pi
c_1(M,J)$ and its trace $s^{\nabla}$ (given by $2m \rho^{\nabla}\wedge
\omega^{m-1} = s^{\nabla}\omega^m$) is called {\it hermitian scalar curvature}
of $(g,J,\omega)$.

The hermitian scalar curvature plays an important role in a setting described
by Donaldson~\cite{Do} (see also \cite{gauduchon-book}), in which $s^{\nabla}$
is identified with the momentum map of the action of the group ${\rm
Ham}(M,\omega)$ of hamiltonian symplectomorphisms of a compact symplectic
manifold $(M,\omega)$ on the (formal) K\"ahler Fr\'echet space of
$\omega$-compatible almost K\"ahler metrics ${\mathcal AK}_{\omega}$. It
immediately follows from this formal picture~\cite{draghici,lejmi}  that the critical points of the functional on
${\mathcal AK}_{\omega}$
\[
g \longmapsto \int_M (s^{\nabla})^2 \omega^m
\]
are precisely the $\omega$-compatible almost K\"ahler metrics for which ${\rm
grad}_{\omega} s^{\nabla}$ is a Killing vector field. This provides a natural
extension of the notion of an extremal K\"ahler metric to the more general
almost K\"ahler context.

\begin{defn} An almost K\"ahler metric $(g,\omega)$ for which
${\rm grad}_{\omega} s^{\nabla}$ is a Killing vector field is called {\it
extremal}.
\end{defn}

Now let $M$ be a manifold obtained by the generalized Calabi construction of
Sect.~\ref{s:generalised-calabi}. In the notation of this section, for any
$S^2\mathfrak t^*$-valued function ${\bf H}$ on $\Delta$, satisfying the
boundary and positivity conditions, formulae \eqref{M} introduce a pair
$(g,\omega)$ of a smooth metric $g$ and a symplectic form $\omega$ on $M$,
such that the field of endomorphisms $J$ defined by $\omega(\cdot,
\cdot)=g(J\cdot, \cdot)$ is an almost complex structure, i.e., $(g,\omega)$ is
an almost K\"ahler structure on $M$.\footnote{It is easily seen as in
\cite{Abreu0} that $J$ is integrable, i.e. $(g, \omega)$ defines a K\"ahler
metric, if and only if ${\bf H}^{-1}$ is the hessian of a smooth
function on $\Delta^0$.} We shall refer to such pairs $(g,\omega)$ as {\it
compatible almost K\"ahler metrics} on $M$.

\begin{lemma}\label{lem:hermitian-scalar} The hermitian scalar curvature
$s^{\nabla}$ of a compatible almost K\"ahler metric corresponding to ${\bf
H}=(H_{rs})$ is given by
\[
s^{\nabla}= \sum_{j=1}^Næ \frac{Scal_j}{c_j + \langle p_j, z \rangle} - \frac{1}{p(z)}\sum_{r,s=1}^{\ell} \frac{\partial^2}{\partial z_r \partial z_s} (p(z)H_{rs}).
\]
\end{lemma}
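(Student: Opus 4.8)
The plan is to reduce the statement to the computation that already gives the scalar curvature formula \eqref{scal} in the integrable case, by checking that the right-hand side is insensitive to the failure of $J$ to be integrable. Recall that when $(g,\omega)$ is Kähler one has $DJ=0$, so the canonical hermitian connection $\nabla$ of \eqref{chern-connection} coincides with the Levi--Civita connection $D$; its Ricci form is then the ordinary Ricci form, and $s^\nabla$ reduces to the Riemannian scalar curvature, which by \cite{hfkg1} is exactly the asserted expression. Thus the content of the lemma is that \eqref{scal} continues to compute $s^\nabla$ for an \emph{arbitrary} admissible ${\bf H}$. I would work on the open dense set $M^0=\hat P\times_{\T}V^0$, where the almost Kähler structure is given explicitly by \eqref{M}, and use the fact (the footnote to Definition in the appendix) that the complex structure of $\hat S$ and the holomorphic structure of the $\T^c$-bundle are genuine, so that the only obstruction to integrability of $J$ is concentrated in the toric fibre $V$, namely the failure of ${\bf H}^{-1}$ to be a hessian.

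The computation of $s^\nabla=2\Lambda_\omega\rho^\nabla$, where $\rho^\nabla$ is the curvature of the connection induced by $\nabla$ on the anticanonical bundle $K_M^{-1}$, then splits, exactly mirroring the derivation of \eqref{scal} in \cite{hfkg1}, into a base part and a fibre part. The factors $(S_j,\omega_j)$ are genuinely Kähler and of constant scalar curvature, so their contribution to the $\omega$-trace of $\rho^\nabla$ is the same $\sum_{j}\Scal_j/(\ip{p_j,z}+c_j)$ as in the integrable case; here no new input is needed since $J$ is integrable in these directions. The fibre part requires the almost Kähler analogue of Abreu's formula \cite{Abreu0}: for a $\T$-invariant, $\omega_V$-compatible almost Kähler metric on the toric manifold $V$ with inverse-Hessian matrix ${\bf H}$, the hermitian scalar curvature equals $-\sum_{r,s}\del^2 H_{rs}/\del z_r\del z_s$ for every admissible ${\bf H}$, whether or not $J$ is integrable. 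I would establish this by computing $\nabla$ from \eqref{chern-connection} and \eqref{nijenhuis} in the angle-action frame, and then assemble the base and fibre contributions over the fibration using the volume form \eqref{Vol}, $\omega^m=p(z)(\cdots)$, which accounts for both the overall factor $1/p(z)$ and the weight $p(z)$ inside the double divergence.

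The main obstacle, and the crux of the whole lemma, is precisely the almost Kähler Abreu step: one must show that the Nijenhuis correction $-\tfrac12 J(D_XJ)$ appearing in $\nabla$, and hence in $\rho^\nabla$, contributes nothing to the $\omega$-trace that defines $s^\nabla$. Equivalently, the canonical hermitian connection on $K_M^{-1}$ must yield the same Ricci-form trace as the Levi--Civita computation of \cite{hfkg1} even when ${\bf H}^{-1}$ is not a hessian, and the mixed fibre--base curvature terms of the fibration must trace away. I expect this to follow from careful bookkeeping of the torsion of $\nabla$ rather than any global analysis: by \eqref{nijenhuis} the Nijenhuis tensor enters only through the fibre directions, and its $J$-anti-invariant nature forces its contribution to the trace against the $J$-invariant form $\omega$ to vanish. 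The momentum-map interpretation of $s^\nabla$ due to Donaldson and Lejmi (cf.\ \cite{lejmi} and Remark~\ref{rem20}) provides a useful consistency check, since it guarantees that $s^\nabla$ is the correct invariant whose $L^2$-projection is path-independent; but the exact pointwise formula itself comes out of the explicit connection computation above.
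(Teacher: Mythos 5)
Your overall strategy --- work on $M^0$, observe that the non-integrability of $J$ is concentrated in the toric fibre directions, split the curvature of $K_M^{-1}$ into base and fibre contributions, and reduce everything to an ``almost K\"ahler Abreu formula'' for the fibre --- is in outline the same as the paper's, which computes $\rho^\nabla=-d\,\mathfrak{Im}(\alpha)$ from $\nabla\Phi=\alpha\otimes\Phi$ with $\Phi=\Phi_{\hat S}\wedge\Phi_{\mathcal V}$. But the decisive step is only announced, not carried out, and the one concrete mechanism you offer for it is not the one that actually operates. You assert that the Nijenhuis correction $-\tfrac12 J(D_XJ)$ ``contributes nothing to the $\omega$-trace'' because $N$ is $J$-anti-invariant while $\omega$ is $J$-invariant. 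This cannot be the right explanation: on a general almost K\"ahler manifold $s^\nabla$ differs from the Riemannian scalar curvature by a term quadratic in $\nabla\omega$, so the correction certainly does contribute to curvature traces in general; what must be shown is that, for \emph{this} family of metrics, the specific way $N$ enters the connection induced on $K_M^{-1}$ cancels. In the paper's computation the Nijenhuis tensor appears in the vertical part of $\alpha$ through terms of the form $\omega(N(K_r,K_s),U)=-\sum_k dz_k([JK_r,JK_s])\hat\theta_k(U)$, and these die only upon taking the trace $b(U)=\trace({\bf H}^{-1}\circ A_U)$, because they are antisymmetric in $(r,s)$ while $H^{rs}$ is symmetric. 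That symmetric-versus-antisymmetric cancellation is the actual content of the ``almost K\"ahler Abreu step'', and your $J$-(anti-)invariance heuristic would not produce it.

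A second gap is the claim that ``the mixed fibre--base curvature terms of the fibration trace away'', which you leave entirely unjustified. Controlling these terms is where the paper does real work: it first proves that the vertical foliation $\mathcal V$ is totally geodesic for both the Levi--Civita and the canonical hermitian connections (a statement that itself uses $\langle N(K_r,X),JU\rangle=2\langle(D_UJ)K_r,X\rangle=0$), so that the O'Neill-type formulae (42)--(46) of \cite{hfkg1} persist in the non-integrable case and $\nabla^{\mathcal H}$ coincides with the horizontal lift of $\nabla^{\hat S}$. Without these structural facts the splitting of $\nabla\Phi$ into $(\nabla^{\mathcal H}\Phi_{\hat S})\wedge\Phi_{\mathcal V}+\Phi_{\hat S}\wedge\nabla^{\mathcal V}\Phi_{\mathcal V}$, and hence your base/fibre decomposition of $s^\nabla$, is not available. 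So while the plan points in the right direction, the proof as written is incomplete at exactly the two places where the lemma is nontrivial.
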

\begin{proof} The result is local and we work on the open dense subset $M^0$
where the $\ell$-torus $\T$ acts freely. Recall that $M^0$ is a principal
$\T^c$ bundle over $\hat S$. Let ${\mathcal V}$ be the foliation defined by
the $\T^c$ fibres and $K_r = J {\rm grad}_g z_r$ be the Killing vector fields
generating $\T$; then $T{\mathcal V}$ is spanned by $K_r, JK_r$ at each point
of $M^0$ and, by construction,
\begin{equation}\label{definition}
{\mathcal L}_{K_r} J =0, \ \ K_r^{\flat}= \sum_s H_{rs}\hat \theta_s, \ \
JK_r^{\flat} = - dz_r.
\end{equation}
In order to compute the hermitian Ricci tensor, we take a local
non-vanishing {\it holomorphic} section $\Phi_{\hat S}$ of the
anti-canonical bundle $K_{\hat S}^{-1}= \wedge^{d,0}(\hat S)$ of $\hat S$
(which pulls back to a $(d,0)$-form on $M$) and wedge it with the
$(\ell,0)$-form $\Phi_{\mathcal V} = (K_1^{\flat} - \sqrt{-1}J K_1^{\flat})
\wedge \cdots \wedge (K_{\ell}^{\flat} - \sqrt{-1}J K_{\ell}^{\flat}).$
Thus, $\Phi = \Phi_{\hat S} \wedge \Phi_{\mathcal V}$ is a non-vanishing
section of $K^{-1}_{M^0}$ and the hermitian Ricci form $\rho^{\nabla}$ is
then given by
\[
\rho^{\nabla} = -d {\mathfrak Im}(\alpha),
\]
where $\nabla \Phi = \alpha \otimes \Phi$.

Denote by $T{\mathcal H}$ the $g$-orthogonal complement of $T\cV$; the spaces
$T{\mathcal H}$ and $T\cV$ then define the decomposition of $TM^0$ as the sum
of horizontal and vertical spaces and, therefore
\begin{equation}\label{basic}
\nabla_X \Phi =( \nabla^{\mathcal H}_X \Phi_{\hat S}) \wedge \Phi_{\mathcal V} + \Phi_{\hat S} \wedge \nabla^{\mathcal V} _X\Phi_{\mathcal V}, 
\end{equation}
where $\nabla_X Y = \nabla^{\mathcal H} _X Y + \nabla^{\mathcal V}_X Y$
denotes the decomposition into horizontal and vertical parts.

Our first observation is that \cite[Prop.~8]{hfkg1} generalizes in the
non-integrable case in the following sense: {\it The foliation ${\mathcal V}$
is totally-geodesic with respect to both the Levi--Civita and hermitian
connections}. Indeed, with respect to the Levi--Civita connection $D$ we have
$\langle D_{K_r} K_s, X \rangle = \langle D_{JK_r} K_s, X \rangle = 0$ for any
$X\in T{\mathcal H}$; using $[K_r, JK_s]=0$, our claim reduces to check that
$\langle D_{JK_r}JK_s, X \rangle =0$. We take $X$ be the horizontal lift of a
basic vector field and use the Koszul formula
\begin{equation*}
\begin{split}
2\langle D_{JK_r}JK_s, X \rangle &= {\mathcal L}_{JK_r} \langle JK_s, X\rangle + {\mathcal L}_{JK_s}\langle JK_r, X\rangle - {\mathcal L}_X \langle JK_r, JK_s \rangle \\
& + \langle [JK_r,JK_s], X\rangle + \langle {\mathcal L}_X JK_r, JK_s \rangle +  \langle {\mathcal L}_X JK_s, JK_r \rangle \\
&= \langle {\mathcal L}_X JK_r, JK_s \rangle +  \langle {\mathcal L}_X JK_s, JK_r \rangle \\
&= ({\mathcal L}_X  g)(JK_r, JK_s) = ({\mathcal L}_X  \hat{g}(z))(JK_r, JK_s)=0,
\end{split}
\end{equation*}
where $(\hat g= {\hat g}(z), \hat \omega= \hat \omega(z))$ denote the K\"ahler
quotient structure on $\hat S$ (also identified with the horizontal part of
$(g,\omega)$).  Considering the hermitian connection $\nabla$, by
\eqref{chern-connection} and \eqref{nijenhuis}, our claim reduces to showing
that $N(K_r, X)$ is horizontal for any $X\in T{\mathcal H}$; using
\eqref{nijenhuis} and the fact that ${\mathcal V}$ is totally-geodesic with
respect to $D$, we get $\langle N(K_r,X), JU\rangle = 2\langle (D_{U} J)
(K_r), X \rangle = 0,$ for any $U \in T{\mathcal V}$.

\smallskip
The observation that $\cV$ is totally geodesic with respect to  $D$ shows that formulae (42)--(46) in \cite[Prop.~9]{hfkg1} hold true in the non-integrable case too,  i.e. we have
\begin{equation}\label{prop.9}
\begin{split}
D_X Y  & = D^{\mathcal H}_X Y - C(X,Y) \\
D_X U &= \langle C(X, \cdot), U \rangle + [X,U]^{\mathcal V} \\
D_U X &= [U, X]^{\mathcal H} + \langle C(X, \cdot), U \rangle \\
D_U V  &= D^{\mathcal V}_U V, 
\end{split}
\end{equation}
where $X, Y \in T{\mathcal H}$, $U, V \in T{\mathcal V}$ and $C(\cdot, \cdot)$ is the O'Neill tensor given by
\[
2C(X,Y)= \sum_{r=1}^{\ell} \Big(\Omega_r (X,Y)K_r + \Omega_r(JX, Y)
JK_r\Big)
\]
with $\Omega_r = d \hat \theta_r = \sum_{j=1}^N p_{jr} \otimes
\omega_j$. Using \eqref{chern-connection}, it follows that the horizontal lift
of $\nabla^{\hat S}$ coincides with the projections of both $D$ and $\nabla$
to horizontal vectors. In particular, for any horizontal lift $X$,
$\nabla^{\mathcal H}_X \Phi_{\hat S} = \frac{1}{2}\Big((d_{\hat S} - \sqrt{-1}
d^c_{\hat S})\log ||\Phi_{\hat S}||^2_{\hat g}\Big)(X) \Phi_{\hat S}.$ On the
other hand, as $K_r$ are Killing and ${\mathcal V}$ is totally geodesic,
$\nabla^{\mathcal V}_X \Phi_{\mathcal V} = 0$, so that we get from
\eqref{basic}
\begin{equation}\label{alpha1}
\alpha(X) = \frac{1}{2}\Big((d_{\hat S} - \sqrt{-1} d^c_{\hat S})\log
||\Phi_{\hat S}||^2_{\hat g}\Big)(X), \ \ \forall X \in T{\mathcal H}.
\end{equation}

\smallskip
To compute $\alpha (U)$ for $U \in T{\mathcal F}$, consider first $\nabla_U
\Phi_{\cV}$. As $\cV$ is totally geodesic, we can write $\nabla_U \Phi_{\cV} =
(a(U) - \sqrt{-1} b(U))\Phi_{\cV}$. It follows from the very definition of
$\Phi_{\cV}$ (and the fact that ${\rm span}(K_1, \cdots, K_{\ell})$ is
$\omega$-Lagrangian) that $\Phi_{\cV}(K_1,K_2, \cdots, K_{\ell}) = \det
g(K_{r},K_{s})= \det{\bf H},$ and therefore
\[
(\nabla_U \Phi_{\cV}) (K_1,K_2, \cdots, K_{\ell})
= \Big(a(U) - \sqrt{-1}b(U)\Big) \det{\bf H}.
\]
Using the definition of $\Phi_{\cV}$ again,  we obtain 
\[
b(U)= {\rm trace} ({\bf H}^{-1} \circ A_U), \ \ (A_U)_{rs} = -\langle\nabla_{U} K_r, JK_s\rangle.
\]
Using that $K_r$ is Killing, \eqref{nijenhuis} and \eqref{definition} we
further calculate
\begin{equation*}
\begin{split}
(A_U)_{rs} & =  -\langle D_U K_r, JK_s \rangle+ \frac{1}{2}\langle (D_U J)(K_r), K_s \rangle \\
            &= \frac{1}{2}\Big(dK_r^{\flat}(JK_s,U) - \frac{1}{2}\omega(N(K_r,K_s), U)\Big) \\
               &= \frac{1}{2} \Big( \sum_{p,k} H_{rk,p} dz_{p}(JK_s)\hat\theta_k(U) - \frac{1}{2} \sum_k dz_k([JK_r,JK_s])\hat \theta_k (U)\Big) \\
               & = \frac{1}{2} \Big( -\sum_{k,p} H_{rk,p} H_{p s}\hat \theta_k(U) - \frac{1}{2} \sum_k dz_k([JK_r,JK_s])\hat \theta_k(U) \Big) \\
               &= - \frac{1}{4} \sum_{k,p} (H_{rk,p} H_{p s} + H_{sk, p}H_{pr}) \hat \theta_k(U),
\end{split}
\end{equation*}
so that
\begin{equation}\label{alpha2}
b(U)= \sum_{r,s} H^{rs}(A_U)_{rs}= -\frac{1}{2}\sum_{r,k} H_{rk,r}\hat \theta_k(U).
\end{equation}

\smallskip
Finally, in order to compute $\nabla_U \Phi_{\hat S}$, note that ${\mathcal
L}_U \Phi_{\hat S} =0$, and therefore
\begin{equation*}
\begin{split}
(\nabla_U \Phi_{\hat S})(X_1, \cdots, X_d) &= \sum_{k=1}^d \Big(\frac{1}{2}\Phi_{\hat S}(X_1, \cdots, X_{k-1}, J(D_U J)(X_k), X_{k+1}, \cdots, X_d)  \\
\ & \ \ \ \  \ \ \ -\Phi_{\hat S}(X_1, \cdots, X_{k-1}, (D^{\mathcal H}_{X_k}U), X_{k+1}, \cdots, X_d) \Big),
\end{split}
\end{equation*} 
where $X_k \in T{\mathcal H}$.  Now, using \eqref{prop.9}, we further specify
\begin{equation*}
\begin{split}
& (D^{\mathcal H}_{X_k}U) = \frac{1}{2}\sum_{r=1}^{\ell} \sum_{j=1}^N\Big(K_r^{\flat}(U) p_{jr}(J_jX^j_k) - JK_r^{\flat}(U)p_{jr}X^j_k \Big),\\
& \Big((D_U J)(X_k)\Big)^{\mathcal H} =0,
\end{split}
\end{equation*}
where $X_k^j$ (resp. $J_j$) denote the $g_{\hat S}$-orthogonal projection
(resp. restriction) of $X_k$ (resp. $J$) to the subspace $TS_j \subset T\hat
S$ (recall that the universal cover of $(\hat S, g_{\hat S})$ is the K\"ahler
product of $(S_j, g_j, \omega_j)$, so that the projection of $TS_j$ to $T\hat
S$ is a well-defined $D$-parallel subbundle of $T{\hat S}$). Using
\eqref{definition}, and the expressions \eqref{alpha1} and \eqref{alpha2}, we
eventually find that
\begin{equation*}
\begin{split}
{\mathfrak Im}(\alpha) & = - \frac{1}{2} d_{\hat S}^c \log ||\Phi_{\hat S}||^2_{\hat g}  +\frac{1}{2}d^c \log p(z) + \frac{1}{2} \sum_{k,r} H_{kr,k} {\hat \theta}_r \\
&= - \frac{1}{2} d_{\hat S}^c \log ||\Phi_{\hat S}||^2_{\hat g} + \frac{1}{2p(z)}\sum_{k,r} \Big((\frac{\partial p} {\partial z_k}) H_{kr} + p(z)\frac{\partial H_{kr}}{\partial z_k}\Big)\hat \theta_r \\
\rho^{\nabla} & = \sum_{j=1}^N \rho_j  - \sum_{i,r,k} \frac{\partial}{\partial z_k} \Big( \frac{1}{2p(z)}  \frac{\partial (p(z)H_{ir})}{\partial z_i}\Big) dz_k \wedge \theta_r \\ & \ \ -  \frac{1}{2p(z)} \sum_{i,r} \frac{\partial (p(z)H_{ir})}{\partial z_i}\frac{\partial {\hat \omega}}{\partial z_r},
\end{split}
\end{equation*}
where, we recall, $\rho_j$ is the Ricci form of $(S_j,g_j,\omega_j)$, $\hat
\omega (z) = \sum_{j=1}^N\Big(\sum_{r=1}^{\ell} (p_{jr}z_r + c_j)
\omega_j\Big)$, and $p(z) =\prod_{j=1}^N \Big(\sum_{r=1}^{\ell} p_{jr}z_r +
c_j\Big)^{d_j}$.  The formula for $s^{\nabla}$ follows easily.
\end{proof}

\end{document}